\documentclass[a4paper, 12pt]{amsart}
\usepackage{amssymb, amsthm, enumitem, mathtools,csquotes,caption}
\usepackage[hmargin=3.2cm, vmargin={3.8cm, 3.5cm}]{geometry}
\usepackage[english]{babel}
\usepackage[final,tracking=true,kerning=true,spacing=true,
factor=1000,stretch=20,shrink=20,babel=true]{microtype}
\microtypecontext{spacing=nonfrench}
\usepackage{hyperref}
\mathtoolsset{showonlyrefs}
\usepackage[initials, msc-links]{amsrefs}
\usepackage{tikz}
\usetikzlibrary{arrows.meta}
\definecolor{refgreen}{HTML}{019100}
\definecolor{refblue}{HTML}{1002f5}
\hypersetup{colorlinks, 
linkcolor={refgreen}, 
citecolor={refgreen}, 
urlcolor={black}}

\setlist{topsep=6pt, itemsep=.4em, after=\vspace{2pt}}
\newtheorem{thm}{Theorem}[section]
\newtheorem*{thm*}{Theorem}

\newtheorem{lem}[thm]{Lemma}
\newtheorem{prop}[thm]{Proposition}

\theoremstyle{definition}

\theoremstyle{remark}
\newtheorem{rem}[thm]{Remark}

\newcommand{\C}{{\mathbb C}}
\newcommand{\R}{{\mathbb R}}
\newcommand{\D}{\mathbb{D}}
\newcommand{\N}{\mathbb{N}}

\newcommand{\capacity}{\operatorname{Cap}}
\renewcommand{\Re}{\operatorname{Re}}
\newcommand{\extremal}{\rho}

\numberwithin{equation}{section}

\begin{document}
\keywords{Chebyshev polynomials, Szegő--Widom asymptotics, Faber polynomials, 
Extremal signatures, Widom factors, Discrete orthogonal polynomials}
\subjclass[2020]{41A50; 30E15; 30C10; 42C05}
\date{August 13, 2026}
\title{Chebyshev polynomials on a Jordan arc}
\author[B.~Buchecker]{Benedikt Buchecker}
\author[B.~Eichinger]{Benjamin Eichinger} 
\author[O.~Rubin]{ \\ Olof Rubin}
\author[A.~Wennman]{Aron Wennman}

\begin{abstract}
We describe the asymptotics of Chebyshev polynomials on an analytic Jordan arc in the plane. 
This gives an affirmative answer to a conjecture of Christiansen--Simon--Zinchenko, 
based on predictions of Widom from 1969. The proof combines
weighted Faber polynomials with extremal signatures, discrete orthogonal
polynomials and a Marcinkiewicz--Zygmund sampling inequality, 
and yields Szegő--Widom asymptotics for the
Chebyshev polynomials themselves.
\end{abstract}

\newgeometry{hmargin=3.2cm, vmargin={3.2cm, 3cm}}
\maketitle

\section{Introduction}
% !TeX root = ../main.tex

\thispagestyle{empty}
\label{s:intro}
Given a compact set \(K\) in the plane, we denote by \(T_n\) the monic 
polynomial of degree \(n\) with minimal supremum norm on \(K\), that is,
\begin{equation}
\label{eq:Cheb}
\lVert T_n\rVert_K=\min_{a_0,\ldots,a_{n-1}}\max_{z\in K}\Big|z^n+\sum_{k=0}^{n-1}a_kz^k\Big|.
\end{equation}
These extremal polynomials are referred to as {\em Chebyshev polynomials}, and are 
classical objects in approximation theory and complex analysis.
When \(K\subset \R\), the Chebyshev polynomials carry several useful rigidity 
properties.
One of the most important is the {\em Alternation Theorem} \cite{CSZ-Invent}*{Theorem~1.1}, 
which goes back to Borel and Markov. It states that a monic polynomial \(P\) of degree \(n\)
is the Chebyshev polynomial for \(K\) if and only if there exist points \(x_0<\ldots<x_n\)
in \(K\) such that
\[
P(x_j)=(-1)^{n-j}\lVert P\rVert_{K},\qquad 0\le j\le n.
\]
In the model case \(K=[-1,1]\), the Alternation Theorem immediately identifies 
the extremal polynomial as the classical Chebyshev polynomial of the first kind,
\begin{equation}
\label{eq:classical-Cheb}
T_n(x)=2^{1-n}\cos\big(n\arccos (x)\big),\qquad x\in [-1,1].
\end{equation}

Many authors have contributed to the literature 
on Chebyshev polynomials for subsets of the real line,  
including the now classical work of 
Chebyshev, Bernstein, Akhiezer, Rivlin, Shapiro and Widom 
\cites{Chebyshev54, Achieser, Bernstein, RivlinShapiro, Widom} to mention a few;
see \cites{SodinYuditskii} for more on the history. 
A series of decisive results were given more recently 
by Christiansen, Simon, Yuditskii and Zinchenko 
in \cites{CSZY,CSZ-Invent}. 

\restoregeometry

When leaving the real line, the alternation theorem fails, and much less is known
about both structural and asymptotic properties of \(T_n\).
One exceptional case is that of a smooth Jordan curve \(K=\gamma\), 
studied already by Faber in 1919, see \cite{Faber1919}.
The natural quantity is the {\em Widom factor}
\[
\mathcal{W}_n(\gamma)=\frac{\lVert T_n\rVert_\gamma}{\capacity(\gamma)^n},
\]
normalized by the \(n\)th power of the logarithmic capacity of \(\gamma\).
A theorem of Szeg\H{o} states that \(\mathcal{W}_n(K)\ge 1\)
holds for any compact \(K\) of positive capacity, and Faber’s work implies 
that this lower bound is asymptotically attained for any analytic Jordan curve.
Faber's results have since been improved in various directions; see e.g.\ 
\cites{Andrievskii, Suetin, TotikVarga} as well as   
the recent survey \cite{RubinRev}. In particular,
we have
\[
\lim_{n\to\infty} \mathcal{W}_n(\gamma)=1
\]
for piecewise Dini-smooth Jordan curves with finitely
many corners and cusps \cite{MDRW}.

In his seminal 1969 paper \cite{Widom}, Harold Widom studied 
the asymptotics of extremal polynomials for
systems of disjoint Jordan curves and arcs in the plane. 
For \(L^2\)-extremals he obtained a remarkably complete picture 
in this generality, while for Chebyshev polynomials
the analysis was limited to systems of closed Jordan curves. 
Even the very natural case of a single smooth Jordan arc remained open, 
and Widom conjectured that in this setting one should have
\begin{equation}
\label{eq:Widom-conj}
\lim_{n\to\infty}\mathcal{W}_n(\gamma)= 2.
\end{equation}
That is, the Widom factor for a Jordan arc
should asymptotically be \enquote{twice as large} 
as for a Jordan curve of the same logarithmic capacity.
As noted by Totik--Yuditskii \cite{TotikYuditskii} and discussed 
in \cites{Alpan2022, Totik2014}, this conjecture is not correct as stated.
Indeed, Thiran--Detaille \cite{ThiranDetaille1991} found that for the circular arc
\[
\gamma_\alpha=\{e^{i\theta}:|\theta|\le \alpha\},\qquad 0<\alpha\le \pi,
\] 
the Widom factors converge to \(2\cos^2(\alpha/4)\), 
with values ranging from \(1\) to \(2\). 
This coincides with the square of the known limit 
of the corresponding \(L^2\)-Widom factors relative to 
equilibrium measure \cite{AlpanZinchenko}*{Theorem~5.1} and 
based on this Christiansen--Simon--Zinchenko formulated 
a revised conjecture \cite{CSZ-Rev}*{Conjecture~3.4}, which
has remained open. They predicted that
\begin{equation}
\label{eq:CSZ-conj}
\lim_{n\to\infty}\mathcal{W}_n(\gamma)= 1/\extremal(\infty),
\end{equation}
where \(\extremal\) is the outer function 
with boundary moduli \(|\extremal_\pm|=|\phi'_\pm|/(|\phi'_+| + |\phi'_-|)\)
on the two sides of \(\gamma\). Here,  \(\phi\) 
is the canonical exterior conformal map and \(\pm\) 
indicate boundary values taken from the two sides. 
This conjecture turns out to be correct.
\begin{thm}
The Christiansen--Simon--Zinchenko conjecture \eqref{eq:CSZ-conj} 
holds for any analytic Jordan arc \(\gamma\).
\label{thm:main}
\end{thm}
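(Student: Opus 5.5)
The limit \(1/\extremal(\infty)\) will be obtained by proving matching bounds: an upper bound \(\limsup_n \mathcal{W}_n(\gamma)\le 1/\extremal(\infty)\) from explicit competitors, and a lower bound \(\liminf_n \mathcal{W}_n(\gamma)\ge 1/\extremal(\infty)\) from a duality argument with extremal signatures. Throughout I normalize \(\capacity(\gamma)=1\). Let \(\phi\) be the exterior conformal map, with \(\phi(z)=z+O(1)\) at \(\infty\). Since \(\gamma\) is analytic, \(\phi\) extends across each side of the arc. The one-sided derivatives \(\phi'_\pm\) are analytic on the open arc and blow up like the inverse square root of the distance to the two endpoints. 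Consequently \(\extremal\) is a bounded outer function that vanishes at the endpoints. The guiding model is \([-1,1]\): there \(|\phi'_+|=|\phi'_-|\), so \(\extremal\equiv 1/2\) and the limit is \(2\). For the circular arc, a direct computation should reproduce \(2\cos^2(\alpha/4)\), which serves as a consistency check.

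\emph{Upper bound.} I would build weighted Faber polynomials \(F_n\), defined as the polynomial part of \(\phi^n/\extremal\) (more precisely, of \(\phi^n\) times an analytic function \(h\) on \(\C\setminus\gamma\) with \(h(\infty)=1/\extremal(\infty)\) and boundary modulus \(1/|\extremal_\pm|\)). These are monic up to the factor \(1/\extremal(\infty)\), so dividing by it gives an admissible competitor. The remainder is a Cauchy integral over \(\gamma\) of the jump of \(\phi^n h\) between the two sides. The key identity is \(|\phi_+^n h_+| + |\phi_-^n h_-|\)-type cancellation: with \(|\extremal_\pm|=|\phi'_\pm|/(|\phi'_+|+|\phi'_-|)\), the sum of the two boundary contributions should have modulus at most \(1+o(1)\) uniformly on \(\gamma\). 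The point is that \(\extremal\) is chosen exactly so that the two sides add up like \(|\phi'_+|/|\phi'_+|+\ldots\) in the Cauchy kernel representation. This mirrors how \(2\cos(n\theta)=e^{in\theta}+e^{-in\theta}\) arises on \([-1,1]\). The resulting bound is \(\lVert T_n\rVert\le \lVert F_n\rVert\,\extremal(\infty)^{-1}\cdot \extremal(\infty)\ldots\), that is, \(\mathcal{W}_n\le(1+o(1))/\extremal(\infty)\). The endpoints need care, since \(h\) blows up there. I would handle this with a local analysis in the coordinate \(\sqrt{z-a}\), where the arc opens into a smooth curve and the estimate becomes uniform.

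\emph{Lower bound.} Here I would use the Kolmogorov characterization. It suffices to find a finite signed measure \(\mu_n=\sigma_n\,\nu_n\) on \(\gamma\) such that:
\begin{itemize}
\item \(|\sigma_n|=1\);
\item \(\nu_n\) is a probability measure;
\item \(\mu_n\) annihilates polynomials of degree \(<n\);
\item \(\big|\int z^n\,d\mu_n\big|\ge(1-o(1))/\extremal(\infty)\).
\end{itemize}
Then for any monic \(P\) of degree \(n\) we get \(\lVert P\rVert\ge\big|\int P\,d\mu_n\big|\), which yields the lower bound. The candidate signature is the argument of the near-extremal \(F_n\) on its near-maximal points. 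For the weights, I would take a discrete measure supported on the roughly \(2n\) points where \(\phi_\pm^n\) align, weighted by \(|\phi'_\pm|\) (harmonic measure density) and normalized. Orthogonality to lower-degree polynomials then reduces to discrete orthogonal polynomial and quadrature statements on these nodes. I expect it only approximately, and would correct it by a small perturbation.

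\emph{Main obstacle.} The hardest step is making this discrete annihilation exact (or \(o(1)\)-exact) uniformly near the endpoints. My plan is to invoke a Marcinkiewicz--Zygmund sampling inequality on the nodes, \(\lVert p\rVert_{L^2(\mu_{\mathrm{eq}})}\asymp\) discrete norm, for polynomials of degree \(\le 2n\). This converts the defect into an \(L^2\) error that is controllable by the Faber remainder estimate. Projecting the defect off the span of the discrete orthogonal polynomials then costs only \(o(1)\) in total variation.
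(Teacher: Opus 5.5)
Your proposal has two gaps: the upper-bound weight is inverted, and the lower bound lacks the mechanism that makes annihilation work.

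\textbf{Upper bound.} With $h=1/\rho$ you get $|h_+|+|h_-|=1/|\rho_+|+1/|\rho_-|=1/(|\rho_+|\,|\rho_-|)\ge 4$ on $\gamma$, not $1+o(1)$. So the competitor $F_n(h,\cdot)/h(\infty)$ only gives $\limsup_n\mathcal W_n(\gamma)\le\rho(\infty)\max_\gamma(|\rho_+|\,|\rho_-|)^{-1}$. Now $\rho/\rho(\infty)$ is the unique minimizer of $\max_\gamma(|g_+|+|g_-|)$ among weights with $g(\infty)=1$; this follows from the sub-mean-value inequality for $|g/\rho|$. Hence your bound is strictly larger than $1/\rho(\infty)$ unless $\rho$ is constant, which happens only for a segment. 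That is why your model case $[-1,1]$ did not expose the problem. The identity you actually invoke, $|\rho_+|+|\rho_-|=1$, calls for the weight $g=\rho/\rho(\infty)$. Then $|g_+|+|g_-|\equiv1/\rho(\infty)$ and $F_n(g,z)=g_+\phi_+^n+g_-\phi_-^n+O(r^n)$ uniformly on $\gamma$. Also, $\rho$ does not vanish at the endpoints. There $|\phi'_+/\phi'_-|\to1$, so $|\rho_\pm|\to1/2$, consistent with your own $\rho\equiv1/2$ on $[-1,1]$. For analytic $\gamma$, $\rho\circ\varphi$ is holomorphic and zero-free near $\overline D$, so there is no endpoint blow-up to handle.

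\textbf{Lower bound.} The framework is right, but the step carrying all the content is left open. The projection step itself is harmless. If the defect $p\mapsto\int p\,\sigma_n\,d\nu_n$ is $o(1)$ relative to $\lVert p\rVert_{L^2(\nu_n)}$ on $\mathcal P_{n-1}$, Riesz representation removes it at $o(1)$ cost in total variation. The problem is proving that smallness. For your signature with harmonic-measure weights, it is a quadrature statement at critical density, since $\overline{F_n}\,p$ oscillates at up to twice the node frequency. You give no mechanism for it, and neither the Faber remainder estimate nor a sampling inequality produces orthogonality. The sampling inequality you state is also false. An equivalence of $L^2$ and discrete norms for $\deg p\le 2n$ fails on any set of at most $2n$ nodes, since $p$ can vanish at all of them. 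Moreover, the alignment set $\{n\theta+\beta\in2\pi\mathbb Z\}$ has $n+1$ points, not $2n$.

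\textbf{Missing idea.} Use exactly $n+1$ nodes, where annihilation is automatic and exact. With $E(z)=\prod_{k=0}^n(z-z_k)$, the residue theorem gives $\sum_k p(z_k)/E'(z_k)=0$ for $\deg p\le n-1$ and $\sum_k z_k^n/E'(z_k)=1$. So the weights $\propto1/|E'(z_k)|$ and the signature $\overline{E'(z_k)}/|E'(z_k)|$ are forced. They give $\mathcal W_n(\gamma)\ge\bigl(\sum_k1/|\mathcal E'(z_k)|\bigr)^{-1}$ with no correction needed. Everything then reduces to proving $|\mathcal E'(z_j)|\ge(n+1)/\rho(\infty)-C$ at the alignment points. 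The paper does this by comparing the node polynomial with the dual Faber polynomial $(z^2-1)F_{n-1}(f\phi/\sqrt{\zeta^2-1},z)$. Here $f$ solves a jump problem ($|f_+|=|f_-|$, $\arg(f_+/f_-)=\arg(g_+/g_-)$), so the dual polynomial is exponentially close to $U=\frac{R(\infty)}{\phi'(\infty)}\bigl(\phi_+^{n+1}/R_++\phi_-^{n+1}/R_-\bigr)$ with $R=\phi'/\rho$. Differentiating $U$ at its zeros gives $(n+1)/\rho(\infty)+O(1)$, and twice that at $\pm1$. Rouch\'e's theorem plus the $c/n^2$ separation of the nodes transfers this to $\mathcal E'$. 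No Marcinkiewicz--Zygmund inequality is needed for this theorem; it only enters the Szeg\H{o}--Widom asymptotics.
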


The boundary values \(|\extremal_\pm|\) are given as the ratio of the
one-sided harmonic measures density \(\frac{1}{2\pi}|\phi'_\pm|\,|dz|\)
to the standard two-sided harmonic measure density 
\(d\omega_{\C\setminus\gamma,\infty}=\tfrac{1}{2\pi}(|\phi'_+|+|\phi'_-|)\,|dz|\), 
relative to the point at infinity.
The quantity \(\extremal(\infty)\) can be conveniently 
expressed in terms of an entropy integral
\[
\log \extremal(\infty)=\int_{\gamma}\big[|\extremal_+|\log|\extremal_+| 
+ \big(1-|\extremal_+|\big)
\log\big(1-|\extremal_+|\big)\big]\,d\omega_{\C\setminus\gamma,\infty}.
\]
From this representation, it is not difficult 
to see that \(1/\extremal(\infty)\in (1,2]\), with
the upper extreme case occurring only for a line segment \cite{Alpan2022}. If
an arc is deformed into a closed curve by letting it {\em bite its own tail}, 
then we formally recover \(1/\extremal(\infty)=1\).

The assumption that \(\gamma\) is analytic is made to keep the ideas as transparent as possible, 
but we expect that it can be relaxed considerably. 
In fact, the upper bound already holds for \(C^{2+\alpha}\) arcs.
Our proof can also accommodate a continuous weight on \(\gamma\), which 
changes the asymptotics by a Szeg\H{o} function-factor. However, we will not
pursue this here.

The norm asymptotics can be strengthened to asymptotics of the Chebyshev
polynomials themselves, in the spirit of Szegő--Widom asymptotics. 
Just as above, we use the notation \(f_\pm\) for the two (non-tangential) boundary values 
on \(\gamma\) of a function \(f\) defined on 
the complement \(\widehat{\C}\setminus \gamma\) of \(\gamma\) 
in the Riemann sphere.
To formulate the result, let
\(g\) be the outer function \(g(z)=\extremal(z)/\extremal(\infty)\).

\begin{thm}
\label{thm:SzegoWidom}
If \(\gamma\) is an analytic Jordan arc, we have 
\[
\Big\lVert \frac{T_n}{\capacity(\gamma)^n}-
\big(g_+ \phi_+^n + g_- \phi_-^n\big)\Big\rVert_{L^2(\gamma,|dz|)}
=O\Big(\frac{\log n}{n}\Big),
\]
as well as the locally uniform Szeg\H{o}--Widom 
asymptotics on \(\widehat{\C}\setminus \gamma\)
\[
T_n(z)=\capacity(\gamma)^n
g(z)\phi^n(z)
\Big(1+O\Big(\frac{\log n}{n}\Big)\Big),
\]
as \(n\to\infty\).
\end{thm}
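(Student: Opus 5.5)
We plan to prove Theorem~\ref{thm:SzegoWidom} by combining a matching upper and lower bound for \(\lVert T_n\rVert_\gamma\) with a stability argument that upgrades near-extremality to closeness in \(L^2\). Normalize \(\capacity(\gamma)=1\). For the \textbf{upper bound} we would build an explicit monic competitor from weighted Faber polynomials. Let \(F_n\) denote the polynomial part of \(g\phi^n\) at infinity, i.e.\ \(F_n(z)=\frac{1}{2\pi i}\oint_{\Gamma} \frac{g(\zeta)\phi(\zeta)^n}{\zeta-z}\,d\zeta\) over a level curve \(\Gamma\) enclosing \(\gamma\) and \(z\). This polynomial is monic, since \(g(\infty)=1\). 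Deforming \(\Gamma\) onto the two sides of \(\gamma\) shows that on \(\gamma\)
\[
F_n = g_+\phi_+^n+g_-\phi_-^n + E_n,
\]
where the error \(E_n\) is a Cauchy-type integral with small contribution. Analyticity of \(\gamma\) lets us extend \(\phi\) and \(g\) across the arc on the appropriate Riemann surface, and the endpoints, where \(\phi\sim 1+c\sqrt{z-a}\), contribute \(O(1/n)\) after a local stationary-phase analysis. Since \(|g_+|+|g_-| = (|\phi'_+|+|\phi'_-|)/(|\phi'_+|+|\phi'_-|)/\extremal(\infty)=1/\extremal(\infty)\) pointwise on \(\gamma\) and \(|\phi_\pm|=1\), we get \(\lVert F_n\rVert_\gamma\le 1/\extremal(\infty)+O(\log n/n)\). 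The \(\log n\) should arise from the singular Cauchy integrals near the endpoints.

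For the \textbf{lower bound} we would use an extremal signature, i.e.\ a discrete measure dual to the sup-norm problem. We choose nodes on \(\gamma\) and complex weights \(w_j\) with \(\sum_j |w_j|=1\) such that \(\sum_j w_j z_j^k=0\) for \(k<n\). This is equivalent to the weights being orthogonal to polynomials of degree \(<n\), which is where discrete orthogonal polynomials enter. We also want the pairing \(\sum_j w_j z_j^n\) to be close to \(\extremal(\infty)^{-1}\) in modulus. Then for any monic \(P\) of degree \(n\),
\[
\lVert P\rVert_\gamma\ge \Big|\sum_j w_j P(z_j)\Big| = \Big|\sum_j w_j z_j^n\Big|.
\]
The natural signature is the discretization of the measure \(\overline{\,\cdot\,}\)-dual to \(g_\pm\phi_\pm^n\). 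Concretely, the signature density on each side is proportional to \(\overline{\phi_\pm^{n}}\,\operatorname{sgn}(g_\pm)\) times the one-sided harmonic density. The nodes are placed where \(\phi_+^n\) and \(\phi_-^n\) align in argument, namely \(\arg(g_+\phi_+^n)=\arg(g_-\phi_-^n)\), which gives roughly \(n+1\) points, mimicking alternation. We then correct the weights by a small perturbation, solving a linear system with the discrete Gram matrix, to enforce exact annihilation of lower degrees. The Marcinkiewicz--Zygmund sampling inequality, \(\lVert P\rVert_{L^2(\gamma)}^2\asymp \frac1n\sum_j|P(z_j)|^2\) for \(\deg P\le n\) at these nodes, controls the conditioning of this system, so the perturbation costs only \(O(\log n/n)\).

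For the \textbf{asymptotics of \(T_n\)} we proceed in three steps. First, the bounds give \(\lVert T_n\rVert_\gamma = 1/\extremal(\infty)+O(\log n/n)\). Second, testing \(T_n-F_n\), a polynomial of degree \(<n\), against the signature shows that \(T_n\) nearly attains equality in the triangle inequality at the nodes. Hence \(T_n(z_j)\) is close to \(F_n(z_j)\), in the sense that \(\sum_j |w_j|\,|T_n(z_j)-F_n(z_j)|^2\) is small. This uses a uniformly convex comparison, since near equality in \(|\sum w_jT_n(z_j)|\le \sum|w_j||T_n(z_j)|\) forces the phases of \(T_n(z_j)\) to align and its moduli to be near maximal. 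Third, Marcinkiewicz--Zygmund transfers this discrete estimate to \(\lVert T_n-F_n\rVert_{L^2(\gamma)}=O(\log n/n)\), and combining with the Faber estimate gives the first claim. The exterior asymptotics follow by writing \((T_n-F_n)/\phi^n\) as a function analytic off \(\gamma\) and vanishing at infinity, and bounding it by Cauchy's formula from its \(L^2\) boundary data.

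The \textbf{main obstacle} will be the signature construction. The difficulty is obtaining nodes and weights that annihilate all lower-degree polynomials exactly while keeping total mass 1 and pairing value \(1/\extremal(\infty)-O(\log n/n)\), uniformly up to the endpoints where the two sides merge and the node spacing degenerates. We would first try Chebyshev-type node placement via the uniformizing map near each endpoint. We would then verify the sampling inequality by comparison with the segment case through the square-root conformal model.
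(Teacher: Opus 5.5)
Your overall architecture matches the paper's: the Faber competitor \(F_n(g,\cdot)\) with \(g=\rho/\rho(\infty)\), a discrete signature on the \(n+1\) points where \(n\theta+\beta\in 2\pi\mathbb Z\), a stability step, a Marcinkiewicz--Zygmund transfer, and Cauchy's formula off the arc. However, two load-bearing steps fail as described. \textbf{Lower bound.} With \(n+1\) nodes, the weights annihilating \(\mathcal P_{n-1}\) form a one-dimensional space spanned by \((1/\mathcal E'(z_j))_j\), where \(\mathcal E=\prod_k(z-z_k)\) (capacity \(1\)). So there is nothing to ``correct'' via a Gram system: the signature is rigid, and its pairing with \(z^n\) is exactly \(\bigl(\sum_k 1/|\mathcal E'(z_k)|\bigr)^{-1}\). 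The whole lower bound is therefore the uniform estimate \(|\mathcal E'(z_j)|\ge (n+1)/\rho(\infty)-C\) for all \(j\), endpoints included, and your proposal has no mechanism for it. Discretizing the continuous dual density gives weights whose phases oscillate at the sampling rate, so near-annihilation is not a quadrature statement; in any case the exact weights are forced. The missing idea is the dual Faber polynomial \(\mathcal E_0(z)=(z^2-1)F_{n-1}(f\phi/\sqrt{\zeta^2-1},z)\), with \(f=\rho\phi/(\rho(\infty)\sqrt{z^2-1}\,\phi')\) solving a jump problem so that \(|f_+|=|f_-|\) and \(\arg(f_+/f_-)=\arg(g_+/g_-)\). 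Its zeros are exponentially close to the \(z_j\) (by Rouch\'e plus the \(n^{-2}\)-separation), and \(\mathcal E_0'(z_j)\) is computed explicitly from the two-sided asymptotics.

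\textbf{Rate.} Even granting the lower bound, your convexity step only yields the square root of the gap. From \(|\Lambda|=|\sum_j w_jT_n(z_j)|\le\sum_j|w_j||T_n(z_j)|\le M\) you get \(\sum_j|w_j|\,|T_n(z_j)-t_j|^2\le 2M(M-|\Lambda|)\) with \(t_j=Me^{i\arg\Lambda}\,\overline{w_j}/|w_j|\). The gap \(M-|\Lambda|\) is only known to be \(O(1/n)\) (or \(O(\log n/n)\) in your version), so this gives an \(L^2(\mu_n)\) error of order \(n^{-1/2}\) up to logarithms, not \(O(\log n/n)\). The paper avoids this by centering at \(F_n\) (Lemma~\ref{lem:stab}). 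On an analytic arc, \eqref{eq:Faber_asymptotics_arc} holds with an \(O(r^n)\) error, obtained via the Joukowski lift rather than endpoint stationary phase. Hence \(|F_n(z_j)|\ge\lVert F_n\rVert_\gamma-O(e^{-cn})\) at the nodes. The dual-Faber computation gives \(F_n(z_j)=\rho(\infty)^{-1}\mathcal E'(z_j)/|\mathcal E'(z_j)|+O(1/n)\), so \(F_n\) is orthogonal to \(\mathcal P_{n-1}\) in \(L^2(\mu_n)\) up to \(O(1/n)\). The Pythagorean identity then gives \(\lVert T_n-F_n\rVert_{L^2(\mu_n)}\le\delta_n+\sqrt{\delta_n^2+2M\epsilon_n}=O(1/n)\), which is linear in \(\delta_n\) but requires \(\epsilon_n=O(n^{-2})\); your \(O(\log n/n)\) Faber error would not give this. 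Finally, the \(\log n\) in the theorem comes from \([|\mathcal E|^2]_{A_2,\gamma_n}\asymp\log n\) in the one-sided inequality \(\int_\gamma|P|^2|dz|\lesssim(\log n)^2n^{-1}\sum_j|P(z_j)|^2\). The two-sided equivalence you state is false: the nodes cluster at spacing \(n^{-2}\) near \(\pm1\), where \(L^2(|dz|)\)-normalized polynomials of degree \(n\) can have size \(\asymp n\).
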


\subsection{Outline of the proofs}
Our proof of Theorem~\ref{thm:main} has two main parts. 
We first establish an upper bound by producing trial 
polynomials in the form of weighted Faber polynomials
\(F_n(g,z)\), defined as the polynomial part of the 
Laurent series of 
\[
g(z)\phi^n(z)= \sum_{k=-\infty}^{n} a_k(g) z^k,
\]
where the {\em weight} \(g\) lies in \(H^\infty(\C\setminus \gamma)\).
By analyzing the asymptotics of \(F_n(g,z)\) along \(\gamma\) 
we transfer the Chebyshev extremal problem to the \(n\)-independent
extremal problem of minimizing
\begin{equation}
\label{eq:gplus-gminus}
\max_{z\in\gamma}\Bigl( |g_+(z)|+|g_-(z)|\Bigr)
\end{equation}
over all sufficiently regular functions \(g\in H^\infty(\C\setminus \gamma)\) 
with \(g(\infty)=1\); cf.~\eqref{eq:Faber_asymptotics_arc}. 
Quite remarkably, this limiting extremal problem is explicitly solvable, and 
its minimizer is precisely the function \(g(z)=\extremal(z)/\extremal(\infty)\) from above,
cf.\ Theorem~\ref{thm:extremal_problem}.

To establish the matching lower bound, we use a notion 
called \textit{extremal signatures}, which dates back to 
ideas of Kolmogorov and Rivlin--Shapiro. This notion is equivalent to
optimal prediction measures (OPMs) which have 
appeared in recent work on Chebyshev and residual polynomials. 
The idea is as follows. By an abstract min--max principle, one can show that  
\[
\min_{P}\max_{z\in\gamma}|P(z)|^2
=\max_{\mu\in\mathcal{M}(\gamma)}\min_{P}\int_{\gamma}|P(z)|^2d\mu(z),
\]
where \(P\) runs over all monic polynomials of fixed degree, 
and where \(\mathcal{M}(\gamma)\) denotes the space of probability measures 
supported on \(\gamma\). In particular, for any such measure \(\mu\), 
the norm of the \(n\)th monic orthogonal polynomial in \(L^2(\mu)\) 
supplies a lower bound for \(\lVert T_n\rVert_{\gamma}\).
We construct discrete measures \(\mu_n\) supported on \(n+1\) suitably chosen points,
for which the norms of the corresponding discrete orthogonal polynomials can be analyzed; 
see Theorem~\ref{thm:opm}. Despite being very natural,
to the best of our knowledge the systematic construction of \(n\)-dependent
trial measures to obtain lower bounds has not
explicitly appeared in the literature, apart 
from an exactly solvable case of a different 
flavour analyzed in \cite{BosLevenberOrtegaCerda}. 
 
In the lower bound analysis we encounter 
the notion of \enquote{{\em dual Faber polynomials}} \(E_{n+1}\)
which may be of independent interest; see Section~\ref{s:dual-Faber}. 
These are polynomials of degree \(n+1\) whose zeros are precisely the extremal 
points of the weighted Faber polynomial from the 
upper-bound construction, reminiscent of the relation 
between the classical Chebyshev polynomials of the first and second kinds.
For instance, the dual polynomial associated with 
\eqref{eq:classical-Cheb} would be
\[
E_{n+1}(z)=(z^2-1)\,U_{n-1}(z),
\]
where \(\{U_k\}_{k\ge 0}\) is the sequence of monic Chebyshev polynomials of the second kind. 

To prove Theorem~\ref{thm:SzegoWidom} on Szeg\H{o}--Widom asymptotics, 
we first establish a stability result for the 
near-optimal prediction measures and their associated discrete orthogonal polynomials; see
Lemma~\ref{lem:stab}. 
One consequence is that \(\frac{T_n(z)}{\capacity(\gamma)^n}\) 
is well-approximated by the weighted Faber polynomial \(F_n(g,z)\) in a 
discrete \(L^2\)-norm supported on the zeros \(\{z_j\}_{j=0}^n\) of \(E_{n+1}\). 
This can be promoted to \(L^2\)-convergence on \(\gamma\)
using a Marcinkiewicz--Zygmund type sampling inequality
\[
\int_\gamma|P(z)|^2\,|dz|\le \frac{C(\log n)^2}{n}\sum_{j=0}^n |P(z_j)|^2,
\] 
valid for any polynomial with \(\deg P\le n\). This inequality is contained in 
Lemma~\ref{lem:MZ}, which is directly inspired by a series of works of Chui--Zhong;
see \cites{ChuiZhong, ChuiZhong-JAT}.
The locally uniform Szeg\H{o}--Widom asymptotics
then follows by standard arguments.

\subsection{Discussion}
Beyond the present application, we expect that the proof scheme developed here
could be useful in other optimal approximation problems. It provides, in this
setting, a partial replacement for the alternation and stability mechanisms that
are available on the real line but which have no direct complex counterpart.

Several interesting questions on extremal polynomials on arcs and related sets 
still remain unanswered. 
For instance, one would like to complete Widom's program by describing the limit points of
\(\mathcal W_n(\gamma)\) when \(\gamma\) consists of several disjoint arcs and curves,
and for this new ideas appear to be needed.
Another intriguing question is whether Widom's conjecture 
holds in the original form \eqref{eq:Widom-conj} if we constrain the zeros
to lie on the arc \(\gamma\). More precisely, we define a modified Widom factor
\[
\mathcal{W}_n^*(\gamma)
= \capacity(\gamma)^{-n}
\inf_{z_1,\ldots,z_n\in\gamma}
\Big\lVert 
\prod_{j=1}^n (z-z_j)\Big\rVert_{\gamma},
\]
which corresponds to minimizing the norm of 
monic polynomials with all their zeros constrained to 
the arc \(\gamma\).
Does it hold that \(\lim_{n\to\infty}\mathcal{W}_n^*(\gamma)=2\)?
The answer is clearly yes for an interval when the constraint is inactive, 
but interestingly it is also true for circular arcs \cite{MR14}, 
and we believe that the same should hold for general arcs with appropriate smoothness.
While the upper bound follows by adapting our construction, the lower bound seems to 
require new ideas. 

\section{Faber polynomials on curves and arcs}
% !TeX root = ../main.tex
\label{s:faber}

One central notion to our approach is that of Faber polynomials. 
To keep this exposition self-contained, we devote this section to
a brief outline of their 
fundamental properties, in particular asymptotics.
All of this material is well-known, and some readers 
may wish to jump ahead to Section~\ref{s:upper-bound}.

Let \(K\) denote a non-trivial continuum, 
i.e., a compact connected set containing more than one point.
By the Riemann mapping theorem, 
there exists a unique conformal mapping \(\phi\)
from the unbounded component \(\Omega\) of \(\C\setminus K\) 
to the exterior disk \(\Delta\coloneqq \{w:|w|>1\}\) such that
\[\phi'(\infty) \coloneqq \lim_{z\to \infty}\frac{\phi(z)}{z}>0.\]
Note that \(\phi'(\infty) = 1/\capacity(K)\), and therefore
\[
\mathcal{W}_n(K) = \phi'(\infty)^n\|T_n\|_K.
\]
In this setting, one trivially obtains Szeg\H{o}'s inequality from
\[
\|T_n\|_K = \|T_n/\phi^n\|_K\geq \lim_{z\to\infty}\frac{T_n(z)}{\phi(z)^n} 
= \frac{1}{\phi'(\infty)^n}.
\]

If \(g\) is an analytic function on \(\Omega\) with 
\(g(\infty) = 1\), we denote by \(F_n(g,z)\) the polynomial 
part of the Laurent series of \(g(z)\phi^n(z)\). Alternatively, 
if \(C\) denotes a positively oriented Jordan curve containing 
\(K\) and the point \(z\) in its interior, then 
\begin{equation}
F_n(g,z) = \frac{1}{2\pi i}\int_{C}\frac{g(\zeta)\phi(\zeta)^n}{\zeta-z}d\zeta.
\label{eq:cauchy_integral_formula_faber}
\end{equation}
Assume now that \(\Gamma = K\) is an analytic Jordan curve. 
Then \(\phi\) extends to a conformal map on a neighborhood of \(\overline{\Omega}\).
Let us likewise assume that \(g\) has a holomorphic extension to 
a neighborhood of \(\overline{\Omega}\).
Take two positively oriented Jordan curves \(C_1\) and \(C_2\) 
contained in this neighborhood
such that \(C_1\) lies interior to \(\Gamma\) which in turn lies interior to \(C_2\). 
If \(z\) lies between \(C_1\) and \(C_2\), 
then \eqref{eq:cauchy_integral_formula_faber} implies that
\[
\begin{aligned}
g(z)\phi(z)^n &= \frac{1}{2\pi i}\int_{C_2}
\frac{g(\zeta)\phi(\zeta)^n}{\zeta-z}d\zeta-\frac{1}{2\pi i}
\int_{C_1}\frac{g(\zeta)\phi(\zeta)^n}{\zeta-z}d\zeta \\ 
& = F_n(g,z)-\frac{1}{2\pi i}\int_{C_1}\frac{g(\zeta)\phi(\zeta)^n}{\zeta-z}d\zeta
\end{aligned}
\]
and consequently
\begin{equation}
|g(z)\phi(z)^n-F_n(g,z)|\leq 
\frac{cr^{n}}{\operatorname{dist}(z,C_1)},\quad 
r = \max_{\zeta\in C_1}|\phi(\zeta)|<1, \quad c>0.
\label{eq:Faber_curve_asymptotics}
\end{equation}
This observation forms the basis for solving 
the weighted Chebyshev problem in the 
setting of a Jordan curve.

In the setting of a Jordan arc we need to take additional care.
The essential difference for arcs is that \(\phi\) and \(g\) will 
have two sets of boundary values, one from each side of the arc, and the Faber polynomial
receives a contribution from each side.
To be precise, let \(\gamma = K\) denote an analytic Jordan 
arc which we may assume to have endpoints at \(\pm 1\).
The mapping \(z+\sqrt{z^2-1}\) defined on \(\Omega\) ``opens up'' the arc 
and maps \(\Omega\) to a domain \(D\) -- the exterior of an analytic 
Jordan curve containing the origin in its interior, see \cite{Widom}*{Lemma~11.1}.
Its inverse is given by
\[
\varphi(s) = \frac{s+s^{-1}}{2}.
\]
For any \(z\in \C\), and any positively oriented Jordan curve \(C\) 
containing \(\gamma\) and \(z\) in its interior, we obtain 
\[
F_n(g,z) = \frac{1}{2\pi i}
\int_{C}\frac{g(\zeta)\phi^n(\zeta)}{\zeta-z}d\zeta 
= \frac{1}{2\pi i}\int_{\varphi^{-1}(C)}\frac{g\circ \varphi(s)
(\phi\circ\varphi)^n(s)}{\varphi(s)-z}\varphi'(s)ds.
\]
Since \(\varphi:D\to \Omega\) is one-to-one with a 
two-to-one extension \(\partial D\to\gamma\) away 
from the endpoints we can find a \(\sigma\in \overline{D}\) 
such that \(\varphi(\sigma) = z\). 
Using that
\[
\frac{\varphi'(s)}{\varphi(s)-\varphi(\sigma)}
=\frac{s-s^{-1}}{(s-\sigma)(s-\sigma^{-1})} 
= \frac{1}{s-\sigma}+\frac{1}{s\sigma (s-\sigma^{-1})}
\]
we obtain
\begin{align}
\notag F_n(g,z) & 
= \frac{1}{2\pi i}\int_{\varphi^{-1}(C)}
\frac{g\circ \varphi(s)(\phi\circ\varphi)^n(s)}{s-\sigma}ds
+\frac{1}{2\pi i}\int_{\varphi^{-1}(C)}
\frac{g\circ \varphi(s)(\phi\circ\varphi)^n(s)}{s\sigma(s-\sigma^{-1})}ds\\
& = \frac{1}{2\pi i}\int_{\varphi^{-1}(C)}\frac{G(s)\Phi^n(s)}{s-\sigma}ds
+\frac{1}{2\pi i\sigma}\int_{\varphi^{-1}(C)}
\frac{G(s)\Phi^n(s)}{s}\frac{ds}{s-\sigma^{-1}},
\label{eq:transfered_faber}
\end{align}
where the composition \(\Phi\coloneqq \phi\circ\varphi\) 
is now simply the canonical conformal map from 
\(D\) to \(\Delta\) and \(G\coloneqq g\circ\varphi\).

\begin{figure}[h!]
\centering
\includegraphics[width = 0.8\textwidth]{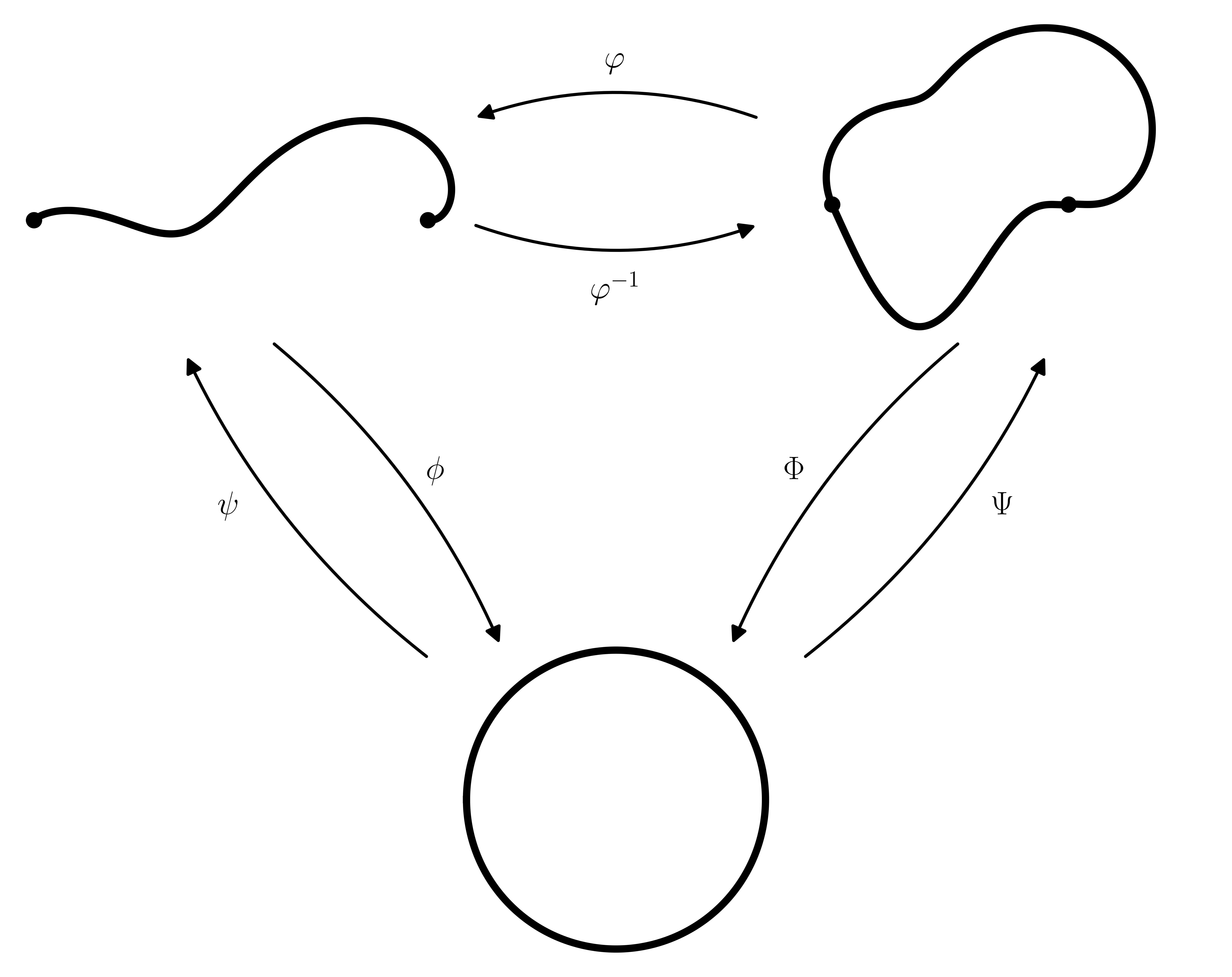}
\captionsetup{width=.95\linewidth}
\caption{Diagram of the various exterior conformal mappings used throughout the paper.}
\label{fig:maps}
\end{figure}

We recognize the right-hand side of \eqref{eq:transfered_faber} as (essentially) 
the sum of 
two Faber polynomials on \(D\) with respect to the 
weights \(G\) and \(G/s\) evaluated at \(\sigma\) and \(\sigma^{-1}\) 
respectively. That is, if the Faber polynomials on \(D\) are denoted by
\(F_{n,D}\), then we have
\[
F_n(g,z)=F_{n,D}(G,\sigma)+\frac{1}{\sigma}F_{n,D}\big(G/s,\sigma^{-1}\big).
\] 
Suppose next that \(G\) extends analytically to a neighborhood 
of \(\overline{D}\). Then, since \(\sigma\) is close to \(\partial D\) 
if and only if \(\sigma^{-1}\) is close to \(\partial D\), the asymptotics 
\eqref{eq:Faber_curve_asymptotics} shows that the right-hand 
side of \eqref{eq:transfered_faber} equals
\begin{align*}
F_n(g,z)=G(\sigma)\Phi^n(\sigma)
+G(\sigma^{-1})\Phi^n(\sigma^{-1})+O(r^n) 
\end{align*}
for some \(r<1\).
In other words, if \(z\) is sufficiently close to \(\gamma\), 
then we obtain
\[
F_n(g,z) = G\circ\varphi^{-1}(z)\Phi^n\circ \varphi^{-1}(z)
+G(1/\varphi^{-1}(z))\Phi^n(1/\varphi^{-1}(z))+O(r^n).
\]
Observe that \(\varphi^{-1}(z)\in D\) while 
\(1/\varphi^{-1}(z)\) lies inside the contour \(\partial D\) 
and therefore these evaluations are interpreted in the sense 
of analytic continuation. The property that the arc may be 
approached from two sides manifests in this formula.
Introducing the labels \(g_+\) and \(g_-\) for the boundary 
values of the function \(g\) as we approach from either side we find that
\begin{equation}
F_n(g,z) = g_+(z)\phi_+(z)^n+g_-(z)\phi_-(z)^n+O(r^n),
\label{eq:Faber_asymptotics_arc}
\end{equation}
which holds uniformly for \(z\in \gamma\) but also 
in a neighborhood of \(\gamma\) if the formula is 
interpreted through analytic extension. 

\begin{rem}
\label{rem:reg-prel}
We conclude this section with a regularity observation about the outer function
\(\extremal\) introduced in Section~\ref{s:intro}. If \(\gamma\) is of class
\(C^{1+\alpha}\), where \(0<\alpha<1\), then the canonical conformal map
\(\Phi=\phi\circ\varphi\) has a non-vanishing H\"older-\(\alpha\) derivative
on \(\overline D\). On \(\partial D\),
the boundary modulus of \(\extremal\circ\varphi\) is given by
\begin{equation}
\big|\extremal\circ\varphi(\sigma)\big|
=\frac{|\Phi'(\sigma)|}
{|\Phi'(\sigma)|+
\left|\frac{d}{d\sigma}\Phi(\sigma^{-1})\right|},
\qquad \sigma\in\partial D.
\label{eq:boundary_data}
\end{equation}
It follows that
\(\extremal\circ\varphi\) is H\"older-\(\alpha\) continuous on
\(\overline D\). If \(\gamma\) is analytic, then the right-hand side of
\eqref{eq:boundary_data} is real analytic, and
\(\extremal\circ\varphi\) extends to be holomorphic and non-vanishing on a
neighborhood of \(\overline D\). In particular, if we put \(g=\extremal/\extremal(\infty)\),
the asymptotic formula \eqref{eq:Faber_asymptotics_arc} applies for \(F_n(g,z)\).
\end{rem}

\section{The upper bound}
% !TeX root = ../main.tex

\label{s:upper-bound}

In this section we prove that
\begin{equation}
\limsup_{n\to\infty}\mathcal{W}_n(\gamma)
\leq \frac{1}{\extremal(\infty)}
\label{eq:upper_bnd}
\end{equation}
where \(\extremal\) is the outer function with boundary modulus given by
\begin{equation}
\label{eq:extremal-mod}
|\rho_\pm(z)|=\frac{|\phi'_\pm(z)|}{|\phi'_+(z)|+|\phi'_-(z)|},\qquad z\in\gamma.
\end{equation}
We do this by explicitly constructing trial polynomials of the form \(F_n(g,z)\),
where \(g\) is chosen among \(H^\infty\)-functions on
\(\C\setminus\gamma\) with \(g(\infty)=1\), so as to minimize the norm
\(\lVert F_n(g,\cdot)\rVert_{\gamma}\) asymptotically.
Since \(F_n(g,z)\) has degree \(n\) and leading coefficient
\(\phi'(\infty)^n\), we immediately find that
\[
\mathcal{W}_n(\gamma)=\phi'(\infty)^n\|T_n\|_\gamma
\leq \|F_n(g,\cdot)\|_\gamma.
\]
Hence, our claimed upper bound will be achieved if we can arrange that
\[
\|F_n(g,\cdot)\|_{\gamma}\longrightarrow \frac{1}{\extremal(\infty)}.
\]

Assume that \(\gamma\) is a Jordan arc of class \(C^{2+\alpha}\) 
for some \(\alpha>0\), and introduce the family
\[
\mathcal{H}=\big\{g\in H^\infty(\Omega): g(\infty)=1,\; g\circ\varphi
\in C^\eta(\overline D)\text{ for some }\eta>0\big\}.
\]
For \(g\in \mathcal{H}\) we gather from \cite{Widom}*{Lemma~11.2} 
that there exists some \(\varepsilon>0\), 
depending on the regularity of \(\gamma\) and \(g\),
such that
\begin{equation}
\label{eq:widom_asymptotics}
\max_{z\in \gamma}\left|F_n(g,z)
-\Bigl(g_+(z)\phi_+(z)^n+g_-(z)\phi_-(z)^n\Bigr)\right| 
= O\big(n^{-\varepsilon}\big),\quad n\to \infty,
\end{equation}
see also \eqref{eq:Faber_asymptotics_arc}. 
As we will now explain, this formula can be used to show the following.

\begin{thm}
\label{thm:optimal_faber}
Let \(\gamma\) be a \(C^{2+\alpha}\) Jordan arc where \(0<\alpha<1\). Then
\[
\inf_{g\in \mathcal{H}}\lim_{n\to\infty}\|F_n(g,\cdot )\|_\gamma 
=\frac{1}{\extremal(\infty)}.
\] 
\end{thm}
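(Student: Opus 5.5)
Using \eqref{eq:widom_asymptotics}, for each \(g\in\mathcal H\) the norm \(\|F_n(g,\cdot)\|_\gamma\) agrees up to \(O(n^{-\varepsilon})\) with \(\max_{z\in\gamma}|g_+\phi_+^n+g_-\phi_-^n|\). Since \(|\phi_\pm|=1\) on \(\gamma\), this quantity is at most \(\max_\gamma(|g_+|+|g_-|)\). I would first show it also tends to this maximum, and then solve the resulting \(n\)-independent extremal problem \eqref{eq:gplus-gminus}.

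\textbf{Step 1: the limit exists and equals \(M(g):=\max_\gamma(|g_+|+|g_-|)\).} The upper bound \(\limsup\le M(g)\) is immediate. For the lower bound, fix a point \(z_0\in\gamma\) in the interior of the arc where the maximum is (nearly) attained. The phase \(\arg(\phi_+^n/\phi_-^n)=n(\theta_+(z)-\theta_-(z))\) winds as \(z\) moves along \(\gamma\), because \(\phi_+\) and \(\phi_-\) parametrize complementary arcs of the unit circle with nonvanishing derivatives. Hence in any fixed neighbourhood of \(z_0\), for large \(n\), there is a point \(z_n\) at which the phases of \(g_+\phi_+^n\) and \(g_-\phi_-^n\) agree up to \(o(1)\). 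This uses continuity of \(g_\pm\) coming from \(g\circ\varphi\in C^\eta\). Consequently \(\liminf\ge M(g)-\delta\) for every \(\delta>0\), so \(\lim_n\|F_n(g,\cdot)\|_\gamma=M(g)\), and the theorem reduces to \(\inf_{g\in\mathcal H}M(g)=1/\extremal(\infty)\).

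\textbf{Step 2: lower bound \(M(g)\ge1/\extremal(\infty)\).} Write the harmonic measure at \(\infty\) as the sum of its one-sided parts, \(d\omega=d\omega_++d\omega_-\), where \(d\omega_\pm=\tfrac1{2\pi}|\phi'_\pm||dz|=|\extremal_\pm|\,d\omega\). By subharmonicity (Jensen), \(0=\log|g(\infty)|\le\int\log|g_+|\,d\omega_++\int\log|g_-|\,d\omega_-\). Set \(p=|\extremal_+|\), so that \(1-p=|\extremal_-|\). Pointwise, \(p\log a+(1-p)\log b\le\log(a+b)+p\log p+(1-p)\log(1-p)\), which is the weighted AM--GM inequality \(a^pb^{1-p}\le (a/p)^p(b/(1-p))^{1-p}\cdot p^p(1-p)^{1-p}\) combined with concavity of \(\log\). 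Integrating against \(d\omega\) gives \(0\le\log M(g)+\log\extremal(\infty)\), by the entropy formula for \(\log\extremal(\infty)\) (equivalently, by \(\log\extremal(\infty)=\int\log|\extremal_+|\,d\omega_++\int\log|\extremal_-|\,d\omega_-\)). This yields \(M(g)\ge 1/\extremal(\infty)\).

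\textbf{Step 3: attainment.} Take \(g=\extremal/\extremal(\infty)\). Then \(|g_+|+|g_-|=(|\extremal_+|+|\extremal_-|)/\extremal(\infty)=1/\extremal(\infty)\) identically on \(\gamma\), so \(M(g)=1/\extremal(\infty)\). It remains to check \(g\in\mathcal H\). By Remark~\ref{rem:reg-prel} with \(C^{2+\alpha}\subset C^{1+\alpha}\), \(\extremal\circ\varphi\) is Hölder-\(\alpha\) on \(\overline D\) and bounded, so \(g\) belongs to \(\mathcal H\). Moreover \eqref{eq:widom_asymptotics} applies to this \(g\), and the infimum is attained.

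\textbf{Main obstacle.} I expect the main difficulty to be the justification in Step 1, namely that the phase difference actually sweeps the full circle near \(z_0\), together with care at the endpoints \(\pm1\). At the endpoints \(|\extremal_\pm|\to1/2\) and \(\phi_+=\phi_-\), so if the maximum of \(|g_+|+|g_-|\) sits there it can be approached from interior points, where the winding argument applies. In Step 2, the point to verify is the rigorous use of the Jensen inequality with boundary values of \(H^\infty\)-functions on \(\C\setminus\gamma\). I would handle this by transferring to \(D\) via \(\varphi\), where \(\log|G|\) has boundary values integrable against harmonic measure at \(\infty\) of \(D\), and that harmonic measure corresponds exactly to \(\omega_+ +\omega_-\).
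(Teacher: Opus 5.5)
Your proposal is correct. Steps 1 and 3 essentially reproduce the paper's argument.

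\begin{itemize}
\item Step 1 is Lemma~\ref{lem:oscillatory}. The paper applies the intermediate value theorem to $\theta+\beta/n$ to get exact phase alignment. Your alignment up to the small oscillation of $\beta$ on a short subarc is equally good, and the degenerate case $g_+(z_0)g_-(z_0)=0$ is trivial either way.
\item Step 3 is the attainment part of Theorem~\ref{thm:extremal_problem}, resting on Remark~\ref{rem:reg-prel}.
\end{itemize}

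The genuine difference is the lower bound in Step 2.

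\textbf{The paper's route.} It argues linearly:
\begin{itemize}
\item it bounds the maximum of $|g_+|+|g_-|$ by its average against $\omega_{\C\setminus\gamma,\infty}$;
\item it observes that this average equals the boundary mean of $|g/\extremal|\circ\psi$ on the unit circle, because the one-sided harmonic measures are $|\extremal_\pm|\,d\omega_{\C\setminus\gamma,\infty}$;
\item it concludes by the sub-mean-value property of the subharmonic function $|g/\extremal|$.
\end{itemize}

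\textbf{Your route.} You use Jensen's inequality for $\log|g|$, then the pointwise Gibbs-type inequality $p\log a+(1-p)\log b\le\log(a+b)+p\log p+(1-p)\log(1-p)$ with $p=|\extremal_+|$, and close with the entropy formula for $\log\extremal(\infty)$. This is precisely a rigorous version of the variational heuristic in the remark preceding the paper's proof.

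\textbf{What each buys.} Your chain gives the stronger statement $\exp\int_\gamma\log(|g_+|+|g_-|)\,d\omega_{\C\setminus\gamma,\infty}\ge 1/\extremal(\infty)$, a geometric-mean rather than arithmetic-mean bound. It also shows how the boundary modulus of $\extremal$ is forced by pointwise optimization, rather than taking $\extremal$ as given. The paper's route is shorter and needs neither logarithmic integrability nor the entropy identity. It gives uniqueness at once from the equality case of the sub-mean-value inequality. It also shows directly that $\extremal/\extremal(\infty)$ solves the $H^1$-type problem used in the paper's later comparison with Widom's $L^2$ theory. Uniqueness is not needed for Theorem~\ref{thm:optimal_faber}; it could also be extracted from your chain by tracking the equality cases in Jensen's inequality and in the pointwise concavity step.
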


By expanding the absolute value in \eqref{eq:widom_asymptotics} 
and using that \(\phi\) has modulus \(1\) on the arc, we find that
\begin{equation}
|F_n(g,z)|^2 = |g_+(z)|^2+|g_-(z)|^2
+2\operatorname{Re}\left(g_+(z)\phi_+(z)^n\overline{g_-(z)\phi_-(z)^n}\right)
+O\big(n^{-\varepsilon}\big).
\label{eq:faber_pol_abs}
\end{equation}
We introduce the functions
\begin{equation}
\theta(z) \coloneqq \arg\left(\phi_+(z)\overline{\phi_-}(z)\right),
\quad \beta(z)\coloneqq \arg\left(g_+(z)\overline{g_-(z)}\right).
\end{equation}
The latter quantity will only be considered when both boundary values 
\(g_\pm\) are non-vanishing,
in which case there is no ambiguity in the definition.
The image of $\theta$ is $[0,2\pi]$ and this angular function 
is strictly increasing as we traverse $\gamma$ between its endpoints. 
We may rewrite \eqref{eq:faber_pol_abs} as
\begin{equation}
|F_n(g,z)|^2 = |g_+(z)|^2+|g_-(z)|^2
+2|g_+(z)||g_-(z)|\cos\bigl(n\theta(z)+\beta(z)\bigr)+O\big(n^{-\varepsilon}\big).
\label{eq:faber_pol_abs_cos}
\end{equation}
This provides us with a very useful heuristic; the Faber polynomial
oscillates rapidly between an upper and a lower envelope, given by
\[
|g_+(z)|+|g_-(z)|\quad\text{and}\quad \big|\,|g_+(z)|-|g_-(z)|\,\big|,
\]
respectively. In particular, we may deduce that
\[
\limsup_{n\to\infty}\max_{z\in \gamma}|F_n(g,z)|\leq 
\max_{z\in \gamma}\Bigl(|g_+(z)|+|g_-(z)|\Bigr),
\]
and in fact equality holds.
\begin{lem}
\label{lem:oscillatory}
If \(g\in \mathcal{H}\), then
\[
\lim_{n\to\infty}\|F_n(g;\cdot)\|_\gamma 
= \max_{z\in \gamma}\Bigl(|g_+(z)|+|g_-(z)|\Bigr).
\]
\end{lem}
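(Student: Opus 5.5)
The upper bound $\limsup_n\|F_n(g,\cdot)\|_\gamma\le\max_{z\in\gamma}(|g_+|+|g_-|)$ is already immediate from \eqref{eq:faber_pol_abs}: since $|\phi_\pm|=1$ on $\gamma$, the triangle inequality applied to \eqref{eq:widom_asymptotics} gives $|F_n(g,z)|\le|g_+(z)|+|g_-(z)|+O(n^{-\varepsilon})$ uniformly on $\gamma$. The plan is to prove the matching lower bound $\liminf_n\|F_n(g,\cdot)\|_\gamma\ge M:=\max_\gamma(|g_+|+|g_-|)$. We exhibit, for each large $n$, a point $z_n\in\gamma$ where the cosine term in \eqref{eq:faber_pol_abs_cos} is close to $1$ while $|g_+|+|g_-|$ is close to $M$.

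Fix $\delta>0$. Since $g\circ\varphi\in C^\eta(\overline D)$, the boundary values $g_\pm$ are continuous on $\gamma$. Pick $z^*\in\gamma$ attaining $M$, and choose a small subarc $I\subset\gamma$ near $z^*$, not containing the endpoints, on which $|g_+|+|g_-|>M-\delta$.

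There are two cases. In the first, $g_+$ or $g_-$ vanishes somewhere in $I$. The identity \eqref{eq:widom_asymptotics} still gives the exact modulus $\bigl||g_+|-|g_-|\bigr|\le |g_+\phi_+^n+g_-\phi_-^n|$, so at such a point $|F_n|\ge M-\delta-o(1)$ directly. Hence we may assume $g_\pm\neq0$ on $I$, after shrinking $I$ (possible unless both vanish, which cannot happen when $M>0$ near $z^*$). Then $\beta$ is a continuous function on $I$.

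Rather than using \eqref{eq:faber_pol_abs_cos} with a continuous $\beta$, it is cleaner to argue as follows. The term $g_+\phi_+^n\overline{g_-\phi_-^n}$ has argument $n\theta+\beta$. Since $\theta$ is strictly increasing and continuous on $\gamma$, it maps $I$ onto a nondegenerate interval $[a,b]$. The function $\beta$ is uniformly continuous on $I$, so we can split $I$ into finitely many subarcs $I_k$ on which the oscillation of $\beta$ is less than $\delta$. On each $I_k$, the image $\theta(I_k)$ has positive length $\ell_k$. For $n>2\pi/\min_k\ell_k$, the function $n\theta$ sweeps an interval of length more than $2\pi$ on each $I_k$. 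Fixing a reference point $\zeta_k\in I_k$, we can therefore find $z_n\in I_k$ with $n\theta(z_n)+\beta(\zeta_k)\equiv0 \pmod{2\pi}$, whence $\cos(n\theta(z_n)+\beta(z_n))\ge\cos\delta$. Using a single subarc $I_1$ already suffices. Plugging this into \eqref{eq:faber_pol_abs_cos} gives
\[
|F_n(g,z_n)|^2\ge(|g_+|-|g_-|)^2+2|g_+||g_-|(1+\cos\delta)-O(n^{-\varepsilon})\ge (M-\delta)^2-4\|g\|_\infty^2\delta^2-O(n^{-\varepsilon}),
\]
using $1-\cos\delta\le\delta^2/2$. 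Letting $n\to\infty$ and then $\delta\to0$ yields the lower bound.

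The only step needing care is the intermediate-value argument. It requires that $\theta$ be strictly increasing (and not merely monotone) on the chosen subarc, so that $n\theta$ covers a full period. The text asserts this for $\theta$ on $\gamma$, and it follows from $|\phi'_\pm|>0$, since $\theta'$ is proportional to $|\phi_+'|+|\phi_-'|$ times arclength. We also need $\beta$ to be merely continuous, not differentiable, which is why we freeze $\beta$ at $\zeta_k$ and absorb its oscillation into $\delta$. Everything else is bookkeeping with the uniform $O(n^{-\varepsilon})$ error from \eqref{eq:widom_asymptotics}.
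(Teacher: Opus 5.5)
Your proof is correct and follows essentially the same route as the paper. The upper bound comes from the triangle inequality in \eqref{eq:widom_asymptotics}, and the lower bound handles separately the case where one of $g_\pm$ vanishes, otherwise using strict monotonicity of $\theta$ near a maximizer of $|g_+|+|g_-|$ to force the cosine in \eqref{eq:faber_pol_abs_cos} close to $1$.

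The only difference is cosmetic. You freeze $\beta$ at a reference point and accept a $\cos\delta$ loss, whereas the paper notes that the continuous function $n\theta+\beta$ has range of length growing like $n$ on $I$. It therefore hits $2\pi\mathbb{Z}$ exactly, so the subdivision of $I$ is unnecessary.
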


\begin{proof}
The upper bound follows immediately from \eqref{eq:widom_asymptotics}.
For the reverse inequality, set
\[
M=\max_{z\in\gamma}\bigl(|g_+(z)|+|g_-(z)|\bigr),
\]
and choose \(z_0\in\gamma\) where this maximum is attained. If
\(g_+(z_0)g_-(z_0)=0\), then \eqref{eq:faber_pol_abs} gives
\[
|F_n(g,z_0)|=M+o(1),
\]
and there is nothing left to prove.

Suppose therefore that both boundary values are non-vanishing at \(z_0\), and let
\(\epsilon>0\). By continuity, there is a non-degenerate subarc \(I\)
containing \(z_0\) on which
\[
|g_+(z)|+|g_-(z)|>M-\epsilon
\]
and on which \(\beta\) is continuous. Since \(\theta\) is strictly increasing,
the image of \(I\) under \(z\mapsto\theta(z)+\beta(z)/n\) contains a fixed
non-degenerate interval for all sufficiently large \(n\). Consequently, the
image of \(I\) under \(z\mapsto n\theta(z)+\beta(z)\) contains a multiple of
\(2\pi\). At such a point \(z_n\in I\), formula
\eqref{eq:faber_pol_abs_cos} gives
\[
|F_n(g,z_n)|^2
=\bigl(|g_+(z_n)|+|g_-(z_n)|\bigr)^2+o(1)
\geq (M-\epsilon)^2+o(1).
\]
The conclusion follows by first letting \(n\to\infty\) and then
\(\epsilon\to 0\).
\end{proof}

We are led to the extremal problem of minimizing
\begin{equation}
\max_{z\in \gamma}\Bigl(|g_+(z)|+|g_-(z)|\Bigr),
\label{eq:extremal_problem}
\end{equation}
within the class \(\mathcal{H}\). 

\begin{thm}
\label{thm:extremal_problem}
The extremal problem \eqref{eq:extremal_problem} has a unique solution
within the class \(\mathcal{H}\), given by \(g(z)=\extremal(z)/\extremal(\infty)\),
and the corresponding minimal value equals
\[ 
\max_{z\in\gamma}\, \Bigl(|g_+(z)|+|g_-(z)|\Bigr)
=\frac{1}{\extremal(\infty)}.
\]
\end{thm}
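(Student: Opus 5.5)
Pass to the domain \(D\) via \(\varphi\): a function \(g\in\mathcal H\) corresponds to \(G=g\circ\varphi\), which lies in \(H^\infty(D)\) and is H\"older continuous on \(\overline D\). The two boundary values \(g_\pm(z)\) at \(z\in\gamma\) are \(G(\sigma)\) and \(G(\sigma^*)\), where \(\sigma,\sigma^*\in\partial D\) are the two preimages of \(z\). Harmonic measure on \(\gamma\) at \(\infty\) pulls back to harmonic measure \(\omega\) on \(\partial D\) at \(\infty\). Its one-sided densities are \(\frac{1}{2\pi}|\phi'_\pm|\,|dz|\), so the one-sided harmonic measure of a side equals \(\omega\) restricted to the corresponding part of \(\partial D\).

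\textbf{Lower bound.} Fix \(g\in\mathcal H\). Since \(\log|G|\) is subharmonic, \(0=\log|g(\infty)|\le\int_{\partial D}\log|G|\,d\omega\). Rewritten on \(\gamma\), with \(w_\pm=|\extremal_\pm|\), this reads
\[
0\le\int_\gamma\bigl(w_+\log|g_+|+w_-\log|g_-|\bigr)\,d\omega_{\C\setminus\gamma,\infty},
\]
because the one-sided densities are \(w_\pm\) times the two-sided density. At each point of \(\gamma\), with \(a=|g_+|\), \(b=|g_-|\) and \(w_++w_-=1\), the weighted AM--GM (Jensen) inequality gives
\[
w_+\log a+w_-\log b=w_+\log\frac{a}{w_+}+w_-\log\frac{b}{w_-}+w_+\log w_++w_-\log w_-\le \log(a+b)+w_+\log w_++w_-\log w_-.
\]
Integrating and using the entropy formula for \(\log\extremal(\infty)\), we get
\[
0\le \int_\gamma\log\bigl(|g_+|+|g_-|\bigr)\,d\omega+\log\extremal(\infty)\le\log M+\log\extremal(\infty),
\]
where \(M=\max_\gamma(|g_+|+|g_-|)\). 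Hence \(M\ge1/\extremal(\infty)\).

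The entropy formula itself should be checked. Since \(\extremal\) is outer, \(\log\extremal(\infty)=\int_{\partial D}\log|\extremal\circ\varphi|\,d\omega=\int_\gamma(w_+\log w_++w_-\log w_-)\,d\omega\), which is exactly the stated expression.

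\textbf{Attainment and uniqueness.} For \(g=\extremal/\extremal(\infty)\) we have \(|g_+|+|g_-|=(w_++w_-)/\extremal(\infty)=1/\extremal(\infty)\) identically on \(\gamma\). By Remark~\ref{rem:reg-prel}, \(g\in\mathcal H\) (the remark assumes \(C^{1+\alpha}\) regularity, which \(C^{2+\alpha}\) implies), so the minimal value is attained.

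For uniqueness, suppose \(g\in\mathcal H\) also attains \(M=1/\extremal(\infty)\). Then every inequality in the chain is an equality:
\begin{enumerate}[label=(\roman*)]
\item the Jensen step is an equality almost everywhere, which forces \(a/w_+=b/w_-\) there;
\item \(|g_+|+|g_-|=M\) holds \(\omega\)-a.e.;
\item \(\int\log|G|\,d\omega=0\).
\end{enumerate}
From (i) and (ii), \(|g_\pm|=w_\pm M=|\extremal_\pm|/\extremal(\infty)\) a.e. Then (iii) means equality in the subharmonic mean-value inequality for \(\log|G|\), which for bounded analytic \(G\) says that \(G\) has no inner factor. So \(G\) is outer with the same boundary modulus as \(\extremal\circ\varphi/\extremal(\infty)\), and the normalization \(G(\infty)=1\) gives \(g=\extremal/\extremal(\infty)\).

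\textbf{Main obstacle.} The main difficulty is the function-theoretic justification. One must confirm the inequality \(\log|G(\infty)|\le\int\log|G|\,d\omega\) with equality exactly for outer \(G\) in \(H^\infty(D)\), which holds for a Jordan domain with rectifiable (here \(C^{1+\alpha}\)) boundary, e.g.\ by transplanting to \(\Delta\) via \(\Phi\). One must also identify \(\omega\) on \(\partial D\) with the one-sided harmonic measures on \(\gamma\); this is bookkeeping. The case where \(g_\pm\) vanishes somewhere is harmless, since both sides of the Jensen step then equal \(-\infty\).
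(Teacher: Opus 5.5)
Your proof is correct, but it takes a different route from the paper's. The paper never takes logarithms. It bounds the maximum from below by the average \(\int_\gamma(|g_+|+|g_-|)\,d\omega_{\C\setminus\gamma,\infty}\). It then uses that \(|\extremal_\pm|\) are exactly the densities of the one-sided harmonic measures relative to the two-sided one, so this average equals the boundary mean of \(|g/\extremal|\circ\psi\) on the unit circle. The sub-mean-value inequality for the subharmonic function \(|g/\extremal|\) finishes the bound, and uniqueness follows because equality forces \(|g/\extremal|\) to be harmonic, hence \(g/\extremal\) constant.

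You instead apply Jensen's inequality to \(\log|G|\) and close the gap with the pointwise Gibbs (weighted AM--GM) inequality and the entropy identity for \(\log\extremal(\infty)\). This is in effect a rigorous version of the heuristic variational remark that precedes the paper's proof.

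The trade-offs are as follows.
\begin{itemize}
\item The paper's route is shorter and needs neither the entropy formula nor inner--outer factorization. However, it uses the candidate extremizer as a divisor, which relies on \(\extremal\circ\varphi\) being zero-free and continuous on \(\overline D\).
\item Your route reads off the extremizer from the equality case \(|g_+|/|\extremal_+|=|g_-|/|\extremal_-|\). It also proves the stronger geometric-mean bound \(\int_\gamma\log(|g_+|+|g_-|)\,d\omega_{\C\setminus\gamma,\infty}\ge\log(1/\extremal(\infty))\), which implies the paper's \(L^1\) bound by concavity of the logarithm.
\item The price is the equality case of Jensen's inequality (no inner factor). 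You can avoid this in step (iii): once \(|g_\pm|=|\extremal_\pm|/\extremal(\infty)\) a.e., the bounded function \(g/\extremal\) has boundary modulus \(1/\extremal(\infty)\) and attains that value at \(\infty\), so the maximum principle already forces it to be constant.
\end{itemize}

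One small slip: if only one of \(g_\pm\) vanishes at a point, the right-hand side of your Jensen step is finite rather than \(-\infty\). The inequality still holds trivially there, so nothing breaks.
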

\begin{rem}
We first give a heuristic derivation, which explains why the function 
\(g\) solves the above extremal problem. 
First, it can be shown that \(|g_+|+|g_-|\) must be constant
on \(\gamma\); denote this constant by \(M\). 
For a minimizer \(g\), we put \(h=g/M\) and notice that the function
\(h\) maximizes \(h(\infty)\) among $H^\infty$-functions with 
\(|h_+|+|h_-|=1\) on \(\gamma\). 
In looking for the largest possible
value of \(h(\infty)\), we may without loss of generality assume that \(h\)
is outer. We then have that
\[
\log h(\infty)
=\int_\gamma\left[
\frac{|\phi_+'|}{|\phi_+'|+|\phi_-'|}\log |h_+|
+\frac{|\phi_-'|}{|\phi_+'|+|\phi_-'|}\log(1-|h_+|)
\right]d\omega_{\C\setminus\gamma,\infty}.
\]
A simple variational argument shows that the integrand is pointwise maximal when
\[
|h_+|=\frac{|\phi_+'|}{|\phi_+'|+|\phi_-'|}.
\]
Thus, the sought outer function is \(h=\extremal\), and passing back to \(g\) 
by undoing the renormalization suggests that the solution of \eqref{eq:extremal_problem} is
\(g=\extremal/\extremal(\infty)\).
\end{rem}

We proceed with a rigorous proof of this observation.

\begin{proof}[Proof of Theorem~\ref{thm:extremal_problem}]
Assume that \(g\in\mathcal H\). Since harmonic measure is a
probability measure,
\begin{align}
\int_\gamma\bigl(|g_+(z)|+|g_-(z)|\bigr)d\omega_{\C\setminus\gamma,\infty}(z)
\leq\max_{z\in\gamma}\bigl(|g_+(z)|+|g_-(z)|\bigr).
\label{eq:holder_extremal_problem}
\end{align}
Let \(\psi=\phi^{-1}\). By the definition of \(\extremal\), the two
boundary contributions on the arc combine to give
\begin{align}
\begin{split}
\int_\gamma\bigl(|g_+(z)|+|g_-(z)|\bigr)d\omega_{\C\setminus\gamma,\infty}(z)
&=\lim_{r\downarrow1}\int_0^{2\pi}
\left|\frac{g(\psi(re^{it}))}{\extremal(\psi(re^{it}))}\right|
\frac{dt}{2\pi}\\
&\geq\frac{g(\infty)}{\extremal(\infty)}
=\frac{1}{\extremal(\infty)},
\end{split}
\label{eq:max_principle_extremal_problem}
\end{align}
where the last step follows by applying the sub-mean 
value inequality to the modulus of
\(g/\extremal\), which is subharmonic.

On the other hand, the function
\[
g(z)=\frac{\extremal(z)}{\extremal(\infty)}
\]
belongs to \(\mathcal H\) by the regularity observation at the end of
Section~\ref{s:faber}, and its boundary values satisfy
\[
|g_+(z)|+|g_-(z)|
=\frac{1}{\extremal(\infty)}.
\]
This proves extremality.

The same argument also gives uniqueness. Indeed, if \(g\in\mathcal H\) is a
minimizer, then equality holds throughout the chain of inequalities above.
That is, if we put
\[
u(z)=\frac{g(z)}{\extremal(z)},
\]
then
\[
\lim_{r\downarrow1}
\int_0^{2\pi}|u(\psi(re^{it}))|\,\frac{dt}{2\pi}=|u(\infty)|.
\]
Since \(|u\circ\psi|\) is subharmonic in \(\Delta\), equality in the
mean-value inequality forces \(|u\circ\psi|\) to be harmonic. However, the
modulus of a holomorphic function is harmonic only if the function is
constant. Therefore
\[
u\equiv u(\infty)=\frac{1}{\extremal(\infty)},
\]
and hence
\[
g(z)=\frac{\extremal(z)}{\extremal(\infty)}. \qedhere
\]
\end{proof}

Combining Lemma \ref{lem:oscillatory} with Theorem \ref{thm:extremal_problem} 
proves Theorem \ref{thm:optimal_faber} and in turn the upper bound \eqref{eq:upper_bnd}.

\begin{rem}
Let us take a moment to discuss the link to the corresponding 
\(L^2\)-problem investigated and solved by Widom. 
On the one hand, temporarily introducing the \(L^2\)-Widom factors
\[
\mathcal{W}_{2,n}(\gamma) = \inf_{\substack{P\text{ monic}\\ \deg P =n}}
\frac{\|P\|_{L^2(d\omega_{\mathbb{C}\setminus \gamma,\infty})}}{\capacity(\gamma)^n},
\]
it follows from Theorem~12.3 in \cite{Widom} that
\[
\lim_{n\to\infty}\mathcal{W}_{2,n}(\gamma)^2 
= \inf_{\substack{f\in H^2(\mathbb{C}\setminus \gamma)\\ f(\infty) = 1}}
\int_{\gamma}\Bigl(|f_+(z)|^2+|f_-(z)|^2\Bigr)d\omega_{\mathbb{C}\setminus \gamma,\infty}(z),
\]
and it is clear that this extremal problem is related to 
\[
\inf_{\substack{f\in H^1(\mathbb{C}\setminus \gamma)\\f(\infty)=1}}
\int_{\gamma}\Bigl(|f_+(z)|+|f_-(z)|\Bigr)d\omega_{\mathbb{C}\setminus \gamma,\infty}(z)
\] 
by simply taking the square root of the (never vanishing) extremizer for the latter.
On the other hand, our proof amounts to establishing
\[
\lim_{n\to\infty}\mathcal{W}_n(\gamma) 
= \inf_{\substack{f\in H^\infty(\mathbb{C}\setminus \gamma)\\ f(\infty)=1}} 
\operatorname{ess\, sup}_{z\in \gamma}\Bigl(|f_+(z)|+|f_-(z)|\Bigr)
\] 
and the proof of Theorem \ref{thm:extremal_problem} 
shows that the extremal problem on the right-hand side 
as well as the quantity
\[
\inf_{\substack{f\in H^1(\mathbb{C}\setminus \gamma)\\f(\infty)=1}}
\int_{\gamma}|f_+(z)|+|f_-(z)|d\omega_{\mathbb{C}\setminus \gamma,\infty}(z)
\] 
have the same extremal function, namely \(g=\rho/\rho(\infty)\).
The corresponding \(H^2\)-extremizer is therefore simply \(\sqrt g\).
This observation explains why the conjecture holds: the \(H^2\) and \(H^\infty\)-extremal 
problems which describe the limiting behavior of the two Widom factors  
are ultimately equivalent.
It is not clear, however, how to extend this correspondence to \(L^q\)-minimizing 
polynomials for values of \(q\) other than \(2\) and \(\infty\).
\end{rem}

\section{The lower bound}
% !TeX root = ../main.tex

\subsection{Extremal signatures and optimal prediction measures}
\label{s:optimal-prediction}

\noindent To prove Theorem \ref{thm:main} and thus give an 
affirmative answer to the Christiansen--Simon--Zinchenko 
conjecture, it remains to show that
\begin{equation}
\liminf_{n\to\infty}\mathcal{W}_n(\gamma) \geq  \frac{1}{\extremal(\infty)}
\label{eq:lower_bnd}
\end{equation}
whenever \(\gamma\) is an analytic Jordan arc.
We will base our approach on \textit{extremal signatures}, or equivalently
\textit{optimal prediction measures}; see, e.g.,~\cite{Rivlin}.
Given a compact set \(K\) containing at least \(n+1\) 
points we now temporarily introduce the notation \(T_n^K\) 
for the monic polynomial of degree \(n\) that minimizes 
the supremum norm \(\|\cdot\|_K\) over \(K\).  
We use the notation \(\mathcal{M}(K)\) for the space of 
probability measures with support contained 
in \(K\) and \(\mathcal{P}_{k}\) for the collection of 
polynomials of degree at most \(k\). 
If \(\mu\in \mathcal{M}(K)\), then trivially
\begin{equation}
\label{eq:OPM_inequality}
\inf_{p\in \mathcal{P}_{n-1}}\left(\int\left|z^n-p(z)\right|^2d\mu(z)\right)^{1/2}
\leq \left(\int |T_n^K(z)|^2d\mu(z)\right)^{1/2}\leq \|T_n^K\|_K.
\end{equation}
However, one can show that
\[
\max_{\mu\in \mathcal{M}(K)}\inf_{p\in \mathcal{P}_{n-1}}
\left(\int\left|z^n-p(z)\right|^2d\mu(z)\right)^{1/2} = \|T_n^K\|_K.
\]
and any measure \(\mu_n\) which realizes 
this maximum is called an optimal prediction measure.
This idea will be the basis of our approach, however, 
we will bypass any abstract proof concerning maximizing 
measures by simply providing an explicit solution in the 
case where the set \(K\) consists of \(n+1\) points. 
The following formula appears in \cite{SmirnovLebedev}*{p.~444} 
where it is attributed to Vidensky \cite{Vidensky}.

\begin{thm}
\label{thm:opm}
Let $X = \{z_0,\dotsc,z_{n}\}$ be a set of \(n+1\) distinct points. Writing 
$E_{n+1}(z) = \prod_{k=0}^{n}(z-z_k)$ we have that
\begin{equation}
\frac{T_n^X(z)}{\|T_n^X\|_X}= \sum_{k=0}^{n}
\frac{1}{|E_{n+1}'(z_k)|}\frac{E_{n+1}(z)}{(z-z_k)}.
\label{eq:explicit_cheb}
\end{equation}
In particular 
\[
\|T_n^X\|_X = \left(\sum_{k=0}^{n}\frac{1}{|E_{n+1}'(z_k)|}\right)^{-1}.
\]
An optimal prediction measure is given by
\begin{equation}
d\mu(z) = \sum_{k=0}^{n}\frac{1}{|E_{n+1}'(z_k)|}\delta_{z_k}(z)\bigg/
\sum_{k=0}^{n}\frac{1}{|E_{n+1}'(z_k)|}.
\label{eq:OPM}
\end{equation}
\end{thm}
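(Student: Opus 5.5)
Our plan is to work entirely in the \((n+1)\)-dimensional space \(\mathcal P_n\) restricted to \(X\), using Lagrange interpolation. Any polynomial \(P\) of degree at most \(n\) is determined by its values on \(X\):
\[
P(z)=\sum_{k=0}^n P(z_k)\,\frac{E_{n+1}(z)}{E_{n+1}'(z_k)(z-z_k)} .
\]
Comparing coefficients of \(z^n\) shows that \(P\) is monic of degree \(n\) if and only if
\[
\sum_{k=0}^n \frac{P(z_k)}{E_{n+1}'(z_k)}=1.
\]
The Chebyshev problem on \(X\) therefore reduces to a finite-dimensional problem: minimize \(\max_k|w_k|\) over vectors \(w\in\C^{n+1}\) subject to the single linear constraint \(\sum_k w_k/E_{n+1}'(z_k)=1\).

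\textbf{Lower bound.} For any admissible \(w\) we have
\[
1\le \sum_k |w_k|/|E_{n+1}'(z_k)|\le \max_k|w_k|\,S, \qquad S:=\sum_k 1/|E_{n+1}'(z_k)|.
\]
Hence \(\|T_n^X\|_X\ge 1/S\).

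\textbf{Attaining the bound.} Choose \(w_k=S^{-1}\,\overline{E_{n+1}'(z_k)}/|E_{n+1}'(z_k)|\). Then \(w_k/E_{n+1}'(z_k)=S^{-1}/|E_{n+1}'(z_k)|\) is positive and sums to \(1\), while \(|w_k|=1/S\) for every \(k\). This gives equality, so \(\|T_n^X\|_X=1/S\).

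\textbf{Uniqueness and the explicit formula.} Equality in the triangle inequality step forces \(|w_k|=1/S\) for all \(k\), and it also forces each \(w_k/E_{n+1}'(z_k)\) to be a nonnegative real number. So the extremal vector is unique, which also gives uniqueness of \(T_n^X\). Dividing the interpolation formula by \(1/S\) yields
\[
\frac{T_n^X(z)}{\|T_n^X\|_X}=\sum_k \frac{\overline{E'_{n+1}(z_k)}}{|E'_{n+1}(z_k)|}\,\frac{E_{n+1}(z)}{E'_{n+1}(z_k)(z-z_k)}
=\sum_k\frac{1}{|E_{n+1}'(z_k)|}\,\frac{E_{n+1}(z)}{z-z_k}.
\]
This is \eqref{eq:explicit_cheb}.

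\textbf{Optimality of \(\mu\).} Let \(\mu\) be as in \eqref{eq:OPM}, so \(\mu(\{z_k\})=S^{-1}/|E_{n+1}'(z_k)|\). By \eqref{eq:OPM_inequality} it suffices to show that the \(L^2(\mu)\) minimal norm over monic \(P\) is at least \(1/S\). Write \(\mu_k=\mu(\{z_k\})\). The monic constraint reads
\[
1=\sum_k \frac{P(z_k)}{E'_{n+1}(z_k)}=\sum_k \mu_k\,\big(P(z_k)\,S\,u_k\big), \qquad u_k:=\frac{|E'_{n+1}(z_k)|}{E'_{n+1}(z_k)},\ |u_k|=1.
\]
Cauchy--Schwarz in \(L^2(\mu)\), using \(\sum_k\mu_k=1\), then gives
\[
1\le S\Big(\sum_k\mu_k|P(z_k)|^2\Big)^{1/2}.
\]
So \(\|P\|_{L^2(\mu)}\ge 1/S=\|T_n^X\|_X\), and \(\mu\) attains the maximum. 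Equality holds precisely for \(P=T_n^X\).

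There is no real obstacle here. The only points needing care are the leading-coefficient identity from Lagrange interpolation and keeping track of the conjugate phases when checking the optimality of \(\mu\).
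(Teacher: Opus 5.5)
Your argument is correct in substance, but the attainment step contains a conjugation slip. With $w_k=S^{-1}\overline{E_{n+1}'(z_k)}/|E_{n+1}'(z_k)|$ you get $w_k/E_{n+1}'(z_k)=S^{-1}\overline{E_{n+1}'(z_k)}/\bigl(|E_{n+1}'(z_k)|\,E_{n+1}'(z_k)\bigr)$. This has the right modulus but phase $-2\arg E_{n+1}'(z_k)$. So it is not positive, and the monic constraint fails unless every $E_{n+1}'(z_k)$ is real. The middle term of your display for $T_n^X/\|T_n^X\|_X$ carries the same wrong conjugate, so it does not equal the final expression. The correct choice is $w_k=S^{-1}E_{n+1}'(z_k)/|E_{n+1}'(z_k)|$. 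This is what your own equality analysis forces ($w_k/E_{n+1}'(z_k)>0$), and it is what your final formula and the equality case of your Cauchy--Schwarz step actually use. It also matches the paper's $Q(z_k)=E_{n+1}'(z_k)/|E_{n+1}'(z_k)|$.

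Once that is fixed, your proof is complete, and it takes a somewhat different route from the paper's.

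The paper starts from the candidate $Q$ on the right of \eqref{eq:explicit_cheb}. It shows by the residue theorem that $\sum_k p(z_k)/E_{n+1}'(z_k)=0$ for $p\in\mathcal P_{n-1}$, hence $Q\perp\mathcal P_{n-1}$ in $L^2(\mu)$. It then closes with the sandwich $\|T\|_X=\|T\|_{L^2(\mu)}\le\|T_n^X\|_{L^2(\mu)}\le\|T_n^X\|_X$ for $T=Q/S$, together with the quoted uniqueness of Chebyshev polynomials. That gives the formula and the optimality of $\mu$ in one stroke.

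You read the same divided-difference identity off Lagrange interpolation, as the statement that $P\mapsto\sum_k P(z_k)/E_{n+1}'(z_k)$ is the $z^n$-coefficient. You then solve the sup-norm problem directly as a weighted $\ell^\infty$ minimization under a single linear constraint (triangle inequality), and the $L^2(\mu)$ problem by Cauchy--Schwarz.

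Your version is fully self-contained: uniqueness of $T_n^X$ comes out of the equality case instead of being imported. The paper's version foregrounds the fact that $Q$ is the $n$th orthogonal polynomial for $\mu$, which is what gets invoked later in Proposition~\ref{prop:approx-orthogonality}. In your setup that fact is one line away, since $\sum_k P(z_k)/E_{n+1}'(z_k)=S\,\langle P,Q\rangle_{L^2(\mu)}$ and the left side vanishes for $\deg P<n$.
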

\begin{proof}
Let us consider \(n\) as fixed and write \(E = E_{n+1}\). 
Write $Q(z)$ for the right-hand side of \eqref{eq:explicit_cheb}. 
Then $Q$ is a polynomial of degree $n$ and furthermore
\[
Q(z_k) = \frac{E'(z_k)}{|E'(z_k)|}
\]
and so $Q$ has modulus $1$ on $X$. 
We will show that \(Q\) is an orthogonal polynomial with respect
to the measure \(d\mu\) from \eqref{eq:OPM}.

Let $r_0>0$ be chosen such that $|z_k|\leq r_0$ for all $k = 0,\dotsc,n$. 
Then if $p\in \mathcal{P}_{n-1}$ we find by the Residue theorem that
\[
\frac{1}{2\pi i}\int_{|z| = r_0}\frac{p(z)}{E(z)}dz 
= \sum_{k=0}^{n}\frac{p(z_k)}{E'(z_k)}.
\]
On the other hand by taking $r>r_0$
\[
\left|\frac{1}{2\pi i}\int_{|z| = r_0}\frac{p(z)}{E(z)}dz\right| 
= \left|\frac{1}{2\pi i}\int_{|z| = r}\frac{p(z)}{E(z)}dz\right|
\leq \frac{O(r^{n})}{r^{n+1}+O(r^{n})}\rightarrow 0
\]
as $r\rightarrow \infty$. This shows that
\[
\sum_{k=0}^{n}\frac{p(z_k)}{E'(z_k)} = 0
\]
for every $p\in \mathcal{P}_{n-1}$. From the fact that
\[
\frac{1}{E'(z_k)} = \frac{\overline{E'(z_k)}}{|E'(z_k)|^2} 
= \frac{1}{|E'(z_k)|}\overline{Q(z_k)}
\]
we conclude that
\[
0 = \sum_{k=0}^{n}\frac{1}{E'(z_k)}p(z_k) 
= \sum_{k=0}^{n}\frac{1}{|E'(z_k)|}\overline{Q(z_k)}p(z_k).
\]
This shows that \(Q\) is an orthogonal polynomial of degree \(n\)
with respect to the measure \(d\mu\) defined in \eqref{eq:OPM}. 
Let us define
\[
T(z) = Q(z)\bigg/\sum_{k=0}^{n}\frac{1}{|E'(z_k)|}
\]
so that \(T\) is monic. 
From the fact that
\[\|Q\|_X = 1 = \left(\int |Q(z)|^2d\mu\right)^{1/2}\]
we conclude
\[
\|T\|_X=\left(\int |T(z)|^2d\mu(z)\right)^{1/2} 
\leq \left(\int |T_n^{X}(z)|^2d\mu(z)\right)^{1/2}\leq \|T_n^X\|_X.
\]
Uniqueness of the \(n\)th Chebyshev polynomial on \(X\)
establishes the following facts
\begin{itemize}[leftmargin=.8cm]
\item[{\rm (i)}] \(T = T_n^X\),
\item[{\rm (ii)}] equality holds in \eqref{eq:OPM_inequality} 
so \(\mu\) is an optimal prediction measure.\qedhere
\end{itemize}
\end{proof}
Note that if \(X = \{z_0,\dotsc,z_{n}\}\subset \gamma\) then
\[
\|T_n^X\|_X\leq \|T_n^\gamma\|_\gamma
\]
and more importantly, after renormalizing by capacity, 
\begin{equation}
\label{eq:cor-opm}
\mathcal{W}_n(\gamma)\ge 
\left(\sum_{k=0}^{n}\frac{1}{|\mathcal E_{n+1}'(z_k)|}\right)^{-1}\hspace{-12pt},
\qquad \mathcal E_{n+1}(z)=\phi'(\infty)^n \prod_{j=0}^n (z-z_j),
\end{equation}
and so we have obtained a strategy for deriving lower bounds.
Our aim now is to choose the point set \(X\) in an intelligent way.
Note that we have absorbed the factor \(\phi'(\infty)^n=\capacity(\gamma)^{-n}\)
into the capacity-normalized polynomial \(\mathcal E_{n+1}\),
whereas \(E_{n+1}\) in Theorem~\ref{thm:opm} is monic.

\subsection{The dual Faber polynomials}
\label{s:dual-Faber}
Since we aim to show that the upper bound from Section~\ref{s:upper-bound} is sharp, 
the Chebyshev polynomial \(T_n\) ought to be well approximated by the Faber polynomial
\(F_n(g,z)\).
In light of this, Theorem~\ref{thm:opm} suggests that the zeros of
\(\mathcal{E}\) should be placed at (or very near) the extremal points 
of the main asymptotic contribution
\[
g_+(z)\phi_+(z)^n+g_-(z)\phi_-(z)^n,\qquad z\in\gamma
\]
to \(F_n(g,z)\).
Here, \(g\) is the extremal function from the previous section given by
\[
g(z)=\frac{\extremal(z)}{\extremal(\infty)},
\]
where \(\extremal\) is the outer function from
Theorem~\ref{thm:extremal_problem}.
We will accomplish this by constructing a kind of \enquote{dual} Faber polynomial. 
Before we proceed with the construction, we discuss some
preliminaries on the regularity of \(g\) and the phase functions
\[
\theta(z)=\arg\Bigl(\phi_+(z)\overline{\phi_-(z)}\Bigr),
\qquad
\beta(z)=\arg\Bigl(g_+(z)\overline{g_-(z)}\Bigr).
\]

Recall that the map
\(\varphi^{-1}(z) = z+\sqrt{z^2-1}\) 
takes \(\Omega\) to the domain \(D\), lying to the exterior of the 
Jordan curve \(\partial D\), where \(\varphi\) is given by
\[
\varphi(s) = \frac{s+s^{-1}}{2}.
\]
The reader may consult Figure~\ref{fig:maps} for for notation related to other
conformal mappings appearing below.

\begin{lem}
\label{lem:analyticity-g}
If \(\gamma\) is an analytic Jordan arc, then
\[
G(\sigma)\coloneqq g\circ\varphi(\sigma)
\]
extends to be analytic on a neighborhood of \(\overline{D}\).
\end{lem}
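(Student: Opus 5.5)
Our plan is to make the regularity observation of Remark~\ref{rem:reg-prel} rigorous. Since \(g=\extremal/\extremal(\infty)\) differs from \(\extremal\) by a positive constant, it suffices to treat \(R\coloneqq\extremal\circ\varphi\), an outer function on \(D\). The steps are: show that the boundary modulus \eqref{eq:boundary_data} is a positive real-analytic function on \(\partial D\); transfer the problem to the exterior disk \(\Delta\) through the canonical map \(\Phi\); and then use the Herglotz (Schwarz) representation of \(\log R\) to continue it analytically across the boundary.

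\emph{Step 1 (analyticity of \(\Phi\)).} Since \(\gamma\) is an analytic arc, \(\partial D\) is an analytic Jordan curve. We justify this by \cite{Widom}*{Lemma~11.1}: near an interior point of \(\gamma\) the two branches of \(z+\sqrt{z^2-1}\) are analytic. Near the endpoints \(\pm1\), writing \(\gamma\) locally as an analytic curve through \(\pm1\) and composing with the square-root type map shows that the image curve is analytic there as well. By Schwarz reflection (Carathéodory--Kellogg in the analytic case), \(\Phi\) then extends conformally to a neighborhood \(U\) of \(\overline D\), with \(\Phi'\ne0\) on \(U\).

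\emph{Step 2 (the boundary data).} The map \(\sigma\mapsto1/\sigma\) sends \(\partial D\) onto the analytic curve \(\{1/\sigma:\sigma\in\partial D\}\), which is the inner contour. Since \(1/\sigma\) lies in \(U\) for \(\sigma\in\partial D\), the function \(\sigma\mapsto\Phi(1/\sigma)\) is analytic near \(\partial D\) with nonvanishing derivative \(-\sigma^{-2}\Phi'(\sigma^{-1})\). Hence both \(|\Phi'(\sigma)|\) and \(|\tfrac{d}{d\sigma}\Phi(\sigma^{-1})|\) are positive real-analytic functions of \(\sigma\in\partial D\). Consequently the right-hand side of \eqref{eq:boundary_data}, call it \(w(\sigma)\), is real analytic and takes values in \((0,1)\). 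We must check that \(|\extremal\circ\varphi|=w\) on \(\partial D\). This holds because \(\varphi\) identifies the two sides of \(\gamma\) with the two arcs of \(\partial D\) separated by \(\pm1\). The point \(\sigma\) corresponds to the side approached by \(D\), while \(1/\sigma\) corresponds to the opposite side. The factor \(|\varphi'|\) from the chain rule cancels in \(|\phi'_\pm|\), since \(|\varphi'(\sigma)|=|\varphi'(1/\sigma)|\,|\sigma|^{-2}\) and the same Jacobian \(|d(1/\sigma)/d\sigma|\) has been inserted.

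\emph{Step 3 (continuation of the outer function).} We set \(u=\log w\circ\Phi^{-1}\) on the unit circle; it is real analytic by Steps 1–2. The function \(\log(R\circ\Phi^{-1})\) on \(\Delta\) equals the Schwarz integral of \(u\) normalized at \(\infty\), i.e. \(H(\zeta)=\int \frac{\zeta+e^{it}}{\zeta-e^{it}}u(e^{it})\frac{dt}{2\pi}\). Because \(u\) is real analytic, it is the restriction of a Laurent series \(\sum a_k e^{ikt}\) that converges on an annulus \(r<|\zeta|<1/r\). The Schwarz integral is then explicitly \(a_0+2\sum_{k\ge1}\overline{a_k}\zeta^{-k}\), which converges for \(|\zeta|>r\). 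Thus \(H\) extends analytically to \(|\zeta|>r\), and \(R=\exp(H\circ\Phi)\) extends analytically and without zeros to the neighborhood \(\Phi^{-1}(\{|\zeta|>r\})\cup D\) of \(\overline D\), after shrinking \(r\) so that this region lies in \(U\).

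The main obstacle is Step 1 at the endpoints, together with the bookkeeping in Step 2 at \(\sigma=\pm1\). There the two sides of \(\gamma\) meet, and \(\varphi'(\pm1)=0\), so we must confirm that \(w\) does not degenerate. We would handle this by working throughout on \(\partial D\) with the formula \eqref{eq:boundary_data}, which involves only \(\Phi\) and \(\Phi(1/\cdot)\) and never \(\varphi'\); it is manifestly analytic and positive at \(\pm1\) as well, and gives \(w(\pm1)=1/2\) since \(1/\sigma=\sigma\) there.
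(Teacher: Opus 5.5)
Your proposal is correct and is essentially the paper's argument. The paper derives the lemma in one line from Remark~\ref{rem:reg-prel}: the boundary modulus \eqref{eq:boundary_data} is real analytic and positive, so the outer function \(\extremal\circ\varphi\) continues holomorphically and zero-free across \(\partial D\). Your Steps~1--3 supply the details of that remark via the Schwarz integral on \(\Delta\).

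One minor slip: \(\{1/\sigma:\sigma\in\partial D\}\) is not an inner contour but \(\partial D\) itself, since inversion swaps the two arcs lying over the two sides of \(\gamma\). This is exactly why \(1/\sigma\in U\).
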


\begin{proof}
This follows directly from the regularity observation in Remark~\ref{rem:reg-prel}, 
since \(g=\extremal/\extremal(\infty)\).
\end{proof}

\begin{lem}
\label{lem:extreme-pts}
For all sufficiently large \(n\), the function \(n\theta(z)+\beta(z)\)
increases strictly from \(0\) to \(2\pi n\) as \(z\) traverses the arc
from \(1\) to \(-1\). Consequently, there are precisely \(n+1\) extremal
points \(\{z_j\}_{j=0}^n=\{z_{j,n}\}_{j=0}^n\), determined by
\[
n\theta(z_j)+\beta(z_j)=2\pi j,\qquad j=0,\dotsc,n.
\]
Moreover, there exists a constant \(c>0\), depending only on \(\gamma\), 
such that
\[
|z_j-z_k|\geq \frac{c}{n^2},\qquad j\neq k.
\]
\end{lem}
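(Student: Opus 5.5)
Everything will be transferred to the unit circle by means of the conformal maps. Write \(\psi=\Phi^{-1}\) for the inverse of the canonical map \(\Phi=\phi\circ\varphi\colon D\to\Delta\). By analyticity of \(\gamma\), \(\Phi\) extends conformally across \(\partial D\). The endpoints \(\pm1\) of \(\gamma\) correspond to the points \(s=\pm1\) on \(\partial D\), and these are fixed by the involution \(s\mapsto 1/s\).

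Parametrize the upper side of \(\gamma\) by \(w=e^{it}\), \(t\in[0,\pi]\), via \(z=\varphi(\psi(e^{it}))\), chosen so that \(t=0\) corresponds to \(z=1\). The opposite side is then \(\phi_-(z)=\Phi(1/\psi(e^{it}))\eqrefqq\Phi(1/\psi(e^{it}))=e^{i\tau(t)}\). Here \(\tau\) is a real analytic, orientation-reversing diffeomorphism of the circle, because \(s\mapsto1/s\) maps \(\partial D\) to itself reversing orientation, and \(\Phi\) is analytic up to the boundary. Consequently \(\theta=t-\tau(t)\) (mod \(2\pi\)) has derivative \(1-\tau'(t)\geq 1+c_0>0\), with \(\theta(0)=0\) and \(\theta(\pi)=2\pi\) after choosing the continuous branch. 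This gives the strict monotonicity of \(\theta\) already asserted in Section~\ref{s:upper-bound}, now quantified: \(d\theta/dt\) is bounded below uniformly, including at the endpoints.

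Next I will handle \(\beta\). By Lemma~\ref{lem:analyticity-g}, \(G=g\circ\varphi\) is analytic and non-vanishing near \(\overline D\); it is non-vanishing since \(\extremal\circ\varphi\) is, by Remark~\ref{rem:reg-prel}. Hence \(\beta(t)=\arg G(\psi(e^{it}))-\arg G(1/\psi(e^{it}))\) is a real analytic function of \(t\in[0,\pi]\) with bounded derivative, and \(\beta(0)=\beta(\pi)=0\), since at the endpoints both boundary values coincide. Therefore \(\frac{d}{dt}(n\theta+\beta)\geq n(1+c_0)-C>0\) for \(n\) large. The function \(n\theta+\beta\) thus increases strictly from \(0\) to \(2\pi n\), and the points \(z_j\) with \(n\theta(z_j)+\beta(z_j)=2\pi j\) exist and are unique. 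In particular \(z_0=1\) and \(z_n=-1\).

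The separation \(|z_j-z_k|\gtrsim n^{-2}\) is the only delicate step, because the map \(t\mapsto z\) degenerates quadratically at the endpoints. Consecutive parameters satisfy \(t_{j+1}-t_j\geq 2\pi/(n C')\), by the mean value theorem applied with the upper bound \(\frac{d}{dt}(n\theta+\beta)\leq C'n\). The map \(t\mapsto s=\psi(e^{it})\) is bi-Lipschitz, so \(|s_j-s_k|\gtrsim|t_j-t_k|\). It remains to pass through \(\varphi(s)=(s+s^{-1})/2\). The identity
\[
\varphi(s)-\varphi(s')=\frac{(s-s')(ss'-1)}{2ss'}
\]
gives \(|z_j-z_k|\gtrsim|s_j-s_k|\,|s_js_k-1|\). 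Since the points lie on the same side of the involution, that is \(t_j,t_k\in[0,\pi]\), I will show \(|s_js_k-1|\gtrsim \max(|s_j-1|,|s_k-1|)\) near \(s=1\), and similarly near \(s=-1\). This amounts to local geometry: \(\partial D\) is an analytic curve through \(1\) whose tangent is the imaginary direction, \(1/s\) is its reflection, and \(s_j,s_k\) lie on the same half. Then \(|z_j-z_k|\gtrsim |t_j-t_k|\cdot(\text{distance to the endpoint})\), and both factors are at least \(\gtrsim 1/n\). This yields \(c/n^2\). Away from the endpoints the bound is in fact \(\gtrsim 1/n\).
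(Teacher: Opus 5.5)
Your proposal is correct and follows essentially the paper's proof. Both parametrize one half of $\partial D$ and use that $\theta$ has derivative bounded below while $\beta$ has bounded derivative, which gives $(n\theta+\beta)'\asymp n$ and parameter spacing $\gtrsim 1/n$. Both then lose a square root through the Joukowski map at $\pm1$. Your factorization $\varphi(s)-\varphi(s')=\frac{(s-s')(ss'-1)}{2ss'}$, together with the same-side bound on $|ss'-1|$, is exactly a proof of the one-sided H\"older-$\tfrac12$ continuity of $\varphi^{-1}$ that the paper only asserts.

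Three minor slips, none affecting the argument:
\begin{itemize}
\item One side of $\gamma$ is mapped by $\phi$ onto an arc of length $2\pi$ times its harmonic measure, which is not $\pi$ in general. So $[0,\pi]$ should be replaced by that interval.
\item The tangent to $\partial D$ at $1$ is imaginary only if $\gamma$ leaves $1$ in the direction of the negative real axis. You only need that the first-order terms of $ss'-1$ do not cancel for same-side points, which holds for any tangent direction.
\item Getting $\beta=0$ at the far endpoint, rather than merely $\beta\in 2\pi\mathbb{Z}$ there, requires a single-valued branch of $\arg G$ on $\overline D$. Such a branch exists because $G$ is zero-free on $D\cup\{\infty\}$.
\end{itemize}
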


Below in Appendix~\ref{s:geom-lemmas}, we will prove a refined separation claim,
stating that \(|z_j-z_{j-1}|\asymp (1+\min\{j,n-j\})/n^2\). However, for
the lower bound this simple assertion suffices.

\begin{proof}
Since
\(|g_+(z)|+|g_-(z)|=1/\extremal(\infty)\),
on \(\gamma\), the triangle inequality shows that
the modulus of
\[
g_+(z)\phi_+(z)^n+g_-(z)\phi_-(z)^n
\]
attains its maximal value \(1/\extremal(\infty)\) precisely when
\(n\theta(z)+\beta(z)\in 2\pi\mathbb Z\).
We fix a real-analytic parametrization \(\zeta:[0,\tau]\to\partial D\) 
of the part of \(\partial D\) joining \(1\) to \(-1\) with positive orientation, and put
\begin{equation}
\label{eq:def-AB}
\begin{aligned}
A(t)&=\arg\Phi(\zeta(t))-\arg\Phi(\zeta(t)^{-1}), \\
B(t)&=\arg G(\zeta(t))-\arg G(\zeta(t)^{-1}).
\end{aligned}
\end{equation}
Since \(\Phi\) is conformal on a neighborhood of \(D\), 
the derivative \(A'\) is positive and bounded away
from zero on \([0,\tau]\), while Lemma~\ref{lem:analyticity-g} shows that
\(B'\) is bounded. 
It follows that \((nA+B)'\asymp n\) for all sufficiently large \(n\).
Moreover,
\[
A(0)=B(0)=B(\tau)=0,
\qquad
A(\tau)=2\pi.
\]
Hence
\(nA+B\) maps \([0,\tau]\) increasingly onto \([0,2\pi n]\). Writing
\(z_j=\varphi(\zeta(t_j))\) for the extreme points determined above, it holds that
\[
|t_j-t_k|\geq c_1|j-k|/n.
\]
Since \(\zeta\) is regular, the same lower bound holds for
\(|\zeta(t_j)-\zeta(t_k)|\). Finally, the inverse Joukowski map is
H\"older continuous up to \(\gamma\) with parameter \(\alpha=\frac12\), so
\[
|\zeta(t_j)-\zeta(t_k)|
\leq C|z_j-z_k|^{1/2}.
\]
This gives \(|z_j-z_k|\geq c/n^2\) whenever \(j\neq k\).
\end{proof}

We now look for a polynomial whose zeros are asymptotically placed at these
points. It turns out that the natural dual Faber polynomial is
\begin{equation}
\begin{aligned}
\label{eq:dual_faber}
\mathcal{E}_0(z)
&=(z^2-1)F_{n-1}\Bigl(f\phi/\sqrt{\zeta^2-1},z\Bigr)  
\\
&=\frac{z^2-1}{2\pi i}
\int_C
\frac{f(\zeta)\phi(\zeta)^n}{\sqrt{\zeta^2-1}}
\frac{d\zeta}{\zeta-z}.
\end{aligned}
\end{equation}
The intuition is that the square-root singularity switches the sign of the
interaction between the two branches, changing the oscillations from a cosine
pattern to a sine pattern.
To formulate this precisely, let \(z=\varphi(\sigma)\). For a function \(h\) such
that \(H=h\circ\varphi\) has an analytic extension to a neighborhood of
\(\overline D\), we use the notation
\begin{equation}
\label{eq:def-Hpm}
h_+(z)=H\bigl(\varphi^{-1}(z)\bigr),
\qquad
h_-(z)=H\Bigl(\frac{1}{\varphi^{-1}(z)}\Bigr).
\end{equation}
On the arc \(\gamma\), this agrees with the two boundary values taken from the
two sides of the arc. This has already been used throughout in section 2.

\begin{thm}
\label{thm:dual-Faber-asymp}
Assume that \(F = f\circ\varphi\) has an analytic extension 
to a neighborhood of \(\overline{D}\). 
Then, uniformly on a fixed neighborhood of \(\gamma\), we have
\[
F_{n-1}\Bigl(f\phi/\sqrt{\zeta^2-1},z\Bigr)=
\frac{1}{\sqrt{z^2-1}}\left(f_+(z)\phi_+(z)^n-f_-(z)\phi_-(z)^n\right)+O(r^n),	
\]
for some \(0<r<1\).
\end{thm}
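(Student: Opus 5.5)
We follow the computation leading to \eqref{eq:transfered_faber}, now with the weight \(f\phi/\sqrt{\zeta^2-1}\) and degree \(n-1\). Start from the Cauchy representation \eqref{eq:cauchy_integral_formula_faber},
\[
F_{n-1}\Bigl(f\phi/\sqrt{\zeta^2-1},z\Bigr)=\frac{1}{2\pi i}\int_C\frac{f(\zeta)\phi(\zeta)^n}{\sqrt{\zeta^2-1}}\frac{d\zeta}{\zeta-z},
\]
which is valid because \(\phi/\sqrt{\zeta^2-1}\) is analytic in \(\Omega\) and tends to \(\phi'(\infty)\) at \(\infty\). Then substitute \(\zeta=\varphi(s)\).

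The key simplification is that under the Joukowski map we have
\[
\sqrt{\varphi(s)^2-1}=\frac{s-s^{-1}}{2},
\]
with the branch fixed by behaviour at \(\infty\). Since \(\varphi'(s)=(1-s^{-2})/2\), we get
\[
\frac{\varphi'(s)}{\sqrt{\varphi(s)^2-1}}=\frac1s .
\]
Writing \(z=\varphi(\sigma)\), one also has
\[
\varphi(s)-\varphi(\sigma)=\frac{(s-\sigma)(s-\sigma^{-1})}{2s}.
\]
Hence the integrand becomes
\[
F(s)\Phi(s)^n\,\frac{2}{(s-\sigma)(s-\sigma^{-1})}.
\]
Its partial fraction decomposition is
\[
\frac{2}{(s-\sigma)(s-\sigma^{-1})}=\frac{2}{\sigma-\sigma^{-1}}\Bigl(\frac{1}{s-\sigma}-\frac{1}{s-\sigma^{-1}}\Bigr),
\]
and \(\sigma-\sigma^{-1}=2\sqrt{z^2-1}\). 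This gives
\[
F_{n-1}(\cdots,z)=\frac{1}{\sqrt{z^2-1}}\Bigl(F_{n,D}(F,\sigma)-F_{n,D}(F,\sigma^{-1})\Bigr),
\]
where \(F_{n,D}\) is the Faber-type Cauchy integral on \(D\) used in Section~\ref{s:faber}. This identity is exact, valid for \(z\) inside \(C\) and for both \(\sigma\) and \(\sigma^{-1}\) inside \(\varphi^{-1}(C)\).

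Next we apply the curve asymptotics \eqref{eq:Faber_curve_asymptotics} on the analytic Jordan curve \(\partial D\), with \(F\) analytic near \(\overline D\). This yields
\[
F_{n,D}(F,\sigma)=F(\sigma)\Phi(\sigma)^n+O(r^n),
\]
uniformly for \(\sigma\) in an annular neighbourhood of \(\partial D\) bounded away from the inner contour \(C_1\). The same holds at \(\sigma^{-1}\), interpreted via analytic continuation. By \eqref{eq:def-Hpm} the main terms are \(f_\pm\phi_\pm^n\), which produces the claimed formula with error
\[
O(r^n)/\sqrt{z^2-1}.
\]

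The main obstacle is the endpoints \(z=\pm1\), that is \(\sigma=\pm1\). There the prefactor \(1/\sqrt{z^2-1}\) blows up, so the error \(O(r^n)/\sqrt{z^2-1}\) is not uniformly \(O(r^n)\). To handle this, we would use that the difference \(R(\sigma)\) of the two remainders is itself an explicit Cauchy integral over the inner contour \(C_1\). The remainders are analytic in \(\sigma\) near \(\partial D\), and \(R\) is odd under \(\sigma\mapsto\sigma^{-1}\), so it vanishes at \(\sigma=\pm1\). Then
\[
\frac{R(\sigma)}{\sigma-\sigma^{-1}}
\]
is analytic near \(\pm1\). Its size is controlled, by the maximum principle on small discs around \(\pm1\), by \(\sup|R|\), which is \(O(r^n)\) on a slightly larger neighbourhood. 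This gives the uniform \(O(r^n)\) bound on a fixed neighbourhood of \(\gamma\).

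Equivalently, both sides of the identity are analytic in \(z\) near \(\pm1\), because the main term is even under \(\sigma\mapsto\sigma^{-1}\) after division. So the bound away from the endpoints, say on a small circle, transfers inside by the maximum modulus principle.
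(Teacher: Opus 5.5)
Your proposal is correct and follows the paper's proof essentially step by step. You use the same substitution $\zeta=\varphi(s)$ with $d\zeta/\sqrt{\zeta^2-1}=ds/s$, the same partial-fraction split into two Faber-type integrals on $D$ evaluated at $\sigma$ and $\sigma^{-1}$, and the same curve asymptotics on $\partial D$; at the endpoints you also rely, as the paper does, on antisymmetry of the remainder under $\sigma\mapsto\sigma^{-1}$, and your Cauchy-integral-over-$C_1$ plus maximum-principle argument just spells out what the paper asserts in one line.
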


\begin{rem}
\label{rem:removable-sing-U}
Note that the right-hand side 
\[
U(z)=\frac{1}{\sqrt{z^2-1}}\big(f_+(z)\phi_+(z)^n - f_-(z)\phi_-(z)^n\big)
\]
in this asymptotic
formula has an apparent singularity at the endpoints.
Before we turn to the proof of
Theorem~\ref{thm:dual-Faber-asymp}, we explain that 
this singularity is in fact removable
and that \(U\) is holomorphic in a neighborhood of \(\gamma\).
Indeed, the inverse Joukowski map \(\varphi^{-1}(z)\) has involutive 
boundary values on \(\gamma\), that is,  
\[
\varphi^{-1}_+(z)=\frac{1}{\varphi^{-1}_{-}(z)},
\]
and the square root map satisfies \((\sqrt{z^2-1})_+=-(\sqrt{z^2-1})_-\).
Thus, the boundary values \(U_\pm(z)\) on \(\gamma\) are given by
\[
\begin{aligned}
U_+(z)&=\frac{1}{(\sqrt{z^2-1})_+}\Big(F\big(\varphi^{-1}_+(z)\big)
\Phi^n\big(\varphi^{-1}_+(z)\big)
- F\big(1/\varphi^{-1}_+(z)\big)\Phi^n\big(1/\varphi^{-1}_+(z)\big)\Big) \\
&=-\frac{1}{(\sqrt{z^2-1})_-}\Big(F\big(1/\varphi^{-1}_-(z)\big)
\Phi^n\big(1/\varphi^{-1}_-(z)\big)
- F\big(\varphi^{-1}_-(z)\big)\Phi^n\big(\varphi^{-1}_-(z)\big)\Big)\\ 
&=U_-(z).
\end{aligned}
\]
It follows from Morera's theorem that \(U\) extends holomorphically across
the interior of the arc, so \(U\) has at most isolated singularities at \(\pm 1\). 
Since evidently \(|U(z)|=O(|z^2-1|^{-1/2})\), the apparent
singularities at \(\pm1\) are removable as well.
\end{rem}

\begin{proof}[Proof of Theorem~\ref{thm:dual-Faber-asymp}]
We will base our proof on \eqref{eq:Faber_curve_asymptotics}.
Let \(\zeta = \varphi(s)\), and \(z = \varphi(\sigma)\) 
(either of the at most two possible choices of \(\sigma\) may be used). Then
\[
\frac{d\zeta}{\sqrt{\zeta^2-1}} = \frac{ds}{s}, 
\quad \varphi(s)-\varphi(\sigma) = \frac{s^{-1}}{2}(s-\sigma)(s-\sigma^{-1})
\]
and consequently, the Faber polynomial can be written as
\begin{align*}
\frac{1}{2\pi i}\int_C\frac{f(\zeta)\phi^n(\zeta)}{\sqrt{\zeta^2-1}}
\frac{d\zeta}{\zeta-z} 
&= \frac{1}{2\pi i}\int_{\varphi^{-1}(C)}\frac{f\circ \varphi(s)\, 
(\phi\circ\varphi)^n(s)}{\varphi(s)-\varphi(\sigma)}\frac{ds}{s}	
\\ 
&= \frac{1}{\pi i}\int_{\varphi^{-1}(C)}\frac{f\circ\varphi(s)\, 
(\phi\circ\varphi)^n(s)}{(s-\sigma)(s-\sigma^{-1})}ds.
\end{align*}
By a partial fraction decomposition, this equals
\begin{align*}
\frac{2}{\sigma-\sigma^{-1}}\bigg(\frac{1}{2\pi i}
\int_{\varphi^{-1}(C)}\frac{F(s)\Phi^n(s)}{s-\sigma}\,ds
-\frac{1}{2\pi i}
\int_{\varphi^{-1}(C)}\frac{F(s)\Phi^n(s)}{s-\sigma^{-1}}\,ds\bigg).
\end{align*}
Applying \eqref{eq:Faber_curve_asymptotics} we find that 
for \(\sigma\) in a neighborhood of \(\partial D\), 
we have the uniform asymptotic formula
\begin{multline*}
\frac{1}{2\pi i}\int_{\varphi^{-1}(C)}
\frac{F(s)\Phi^n(s)}{s-\sigma}\,ds
-\frac{1}{2\pi i}\int_{\varphi^{-1}(C)}
\frac{F(s)\Phi^n(s)}{s-\sigma^{-1}}\,ds\\
=F(\sigma)\Phi^n(\sigma)
-F(\sigma^{-1})\Phi^n(\sigma^{-1})+O(r^n).
\end{multline*}
It is easy to see that the error term is antisymmetric under
\(\sigma\mapsto\sigma^{-1}\). It is therefore divisible by
\(\sigma-\sigma^{-1}\), and, after possible adjusting \(r\), 
the quotient remains of order \(O(r^n)\). It follows that, for \(z\) in a
neighborhood of \(\gamma\),
\[
F_{n-1}\big(f\phi/\sqrt{\zeta^2-1},z\big) 
= \frac{2}{\sigma-\sigma^{-1}}\Bigl(F(\sigma)\Phi^n(\sigma)
-F(\sigma^{-1})\Phi^n(\sigma^{-1})\Bigr)+O(r^n).
\]
The claim then follows from the fact that 
\(\tfrac12(\sigma-\sigma^{-1}) = \sqrt{z^2-1}\).
\end{proof}

\subsection{A jump problem for the dual weight}
\label{s:jump-dual}
It remains to choose the analytic function \(f\), subject to \(f(\infty)=1\). 
The preceding theorem shows that the zeros of
\(\mathcal{E}_0\) are controlled by the expression
\[
f_+(z)\phi_+^n(z)-f_-(z)\phi_-^n(z).
\]
In order for these zeros to lie on the curve \(\gamma\),
or at least exponentially close, we should ensure that
\begin{equation}
\label{eq:modulus-match}
|f_+(z)|=|f_-(z)|.
\end{equation}
To make sure that the zeros occur at the extremal points of
\[
g_+(z)\phi_+^n(z)+g_-(z)\phi_-^n(z),
\]
we should arrange for the phase difference of \(f\) to match that of
\(g\). That is,
\begin{equation}
\label{eq:phase-jump}
\arg \big(f_+/f_-\big)=\arg \big(g_+/g_-\big).
\end{equation}
We introduce the auxiliary function,
\[
R(z)=\frac{\phi'(z)}{\extremal(z)},
\]
which satisfies
\[
|R_\pm(z)|=|\phi_+'(z)|+|\phi_-'(z)|,
\qquad
\frac{R(\infty)}{\phi'(\infty)}=\frac{1}{\extremal(\infty)}.
\]
Writing
\[
f(z)=\frac{R(\infty)}{R(z)}\eta(z),
\]
the desired jump relation \eqref{eq:modulus-match}--\eqref{eq:phase-jump} 
becomes
\begin{equation}
\label{eq:eta-jump}
\frac{\eta_+(z)}{\eta_-(z)}
=
e^{i\arg(\phi_+'(z)/\phi_-'(z))},
\end{equation}
with normalization \(\eta(\infty)=1\).
The right-hand side can be identified geometrically. 
Indeed, the two tangents to
\(\gamma\) are given by
\[
\tau_\pm(z)=\partial_\theta\phi^{-1}(e^{i\theta})\big|_{e^{i\theta}=\phi_\pm(z)}
=i\frac{\phi_\pm(z)}{\phi_\pm'(z)}.
\]
They point in antipodal directions, and therefore
\[
\frac{\tau_+(z)/|\tau_+(z)|}{\tau_-(z)/|\tau_-(z)|}
=\frac{\phi_+(z)}{\phi_-(z)}e^{-i\arg(\phi_+'(z)/\phi_-'(z))}
=-1.
\]
Thus
\[
e^{i\arg(\phi_+'(z)/\phi_-'(z))}=-\frac{\phi_+(z)}{\phi_-(z)}.
\]
We are therefore led to solve
\[
\frac{\eta_+(z)}{\eta_-(z)}=-\frac{\phi_+(z)}{\phi_-(z)}
\]
with \(\eta(\infty)=1\), and with endpoint singularities of the type
\((z^2-1)^{-1/2}\) to compensate for the endpoint behavior coming from \(1/R\).
This gives explicitly
\[
\eta(z)=\frac{1}{\phi'(\infty)}\frac{\phi(z)}{\sqrt{z^2-1}},
\]
and hence
\begin{equation}
\label{eq:dual-weight}
f(z)=\frac{1}{\extremal(\infty)}
\frac{\phi(z)\extremal(z)}{\sqrt{z^2-1}\,\phi'(z)}
=\frac{R(\infty)}{\phi'(\infty)}
\frac{\phi(z)}{\sqrt{z^2-1}\,R(z)}.
\end{equation}

We summarize the outcome in the following lemma.

\begin{lem}
\label{lem:Faber-dual-asymp}
The function \(F\coloneqq f\circ\varphi\) extends 
to a zero-free holomorphic function on a neighborhood of \(\overline D\), and we have that
\[
F_{n-1}\left(f\phi/\sqrt{\zeta^2-1},z\right)
=\frac{1}{z^2-1}\frac{R(\infty)}{\phi'(\infty)}
\left(\frac{\phi_+(z)^{n+1}}{R_+(z)}+\frac{\phi_-(z)^{n+1}}{R_-(z)}\right)
+O(r^n)
\]
for some \(0<r<1\). Moreover, the extremal points \(z_0,\dotsc,z_{n}\) from
Lemma~\ref{lem:extreme-pts} are precisely the points for which
\begin{equation}
\frac{\phi_+(z)^{n+1}}{R_+(z)}
+\frac{\phi_-(z)^{n+1}}{R_-(z)}=0.
\label{eq:vanishing_cond}
\end{equation}
\end{lem}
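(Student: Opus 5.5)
The proof has three steps: establish the regularity of \(F=f\circ\varphi\), feed it into Theorem~\ref{thm:dual-Faber-asymp}, and then match the zero set with the points of Lemma~\ref{lem:extreme-pts}.

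\textbf{Regularity of \(F\).} I would write
\[
F(s)=\frac{1}{\extremal(\infty)}\,
\frac{\Phi(s)\,\extremal\circ\varphi(s)}{\tfrac12(s-s^{-1})\,(\phi'\circ\varphi)(s)} .
\]
Here I use \(\sqrt{\varphi(s)^2-1}=\tfrac12(s-s^{-1})\).

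Differentiating \(\Phi=\phi\circ\varphi\) gives \(\Phi'(s)=\phi'(\varphi(s))\,\varphi'(s)\), with \(\varphi'(s)=\tfrac12(1-s^{-2})\). Hence
\[
\tfrac12(s-s^{-1})\,\phi'\circ\varphi(s)=\frac{s\,\Phi'(s)}{1}\cdot\frac{(s-s^{-1})}{(1-s^{-2})}\cdot\frac{1}{s}=s\,\Phi'(s).
\]
Therefore
\[
F(s)=\frac{1}{\extremal(\infty)}\,\frac{\Phi(s)\,\extremal\circ\varphi(s)}{s\,\Phi'(s)} .
\]
Since \(\gamma\) is analytic, \(\Phi\) extends conformally across \(\partial D\), so \(\Phi\) and \(\Phi'\) are zero-free near \(\overline D\). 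Also \(0\notin\overline D\). By Remark~\ref{rem:reg-prel}, \(\extremal\circ\varphi\) is holomorphic and zero-free near \(\overline D\). So \(F\) is holomorphic and zero-free on a neighborhood of \(\overline D\).

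\textbf{The asymptotic formula.} With this regularity, Theorem~\ref{thm:dual-Faber-asymp} applies directly. It gives the expression \((z^2-1)^{-1/2}(f_+\phi_+^n-f_-\phi_-^n)+O(r^n)\). I would substitute \eqref{eq:dual-weight} written in the form \(f=\frac{R(\infty)}{\phi'(\infty)}\frac{\phi}{\sqrt{z^2-1}\,R}\), interpreting the two branches via \eqref{eq:def-Hpm}.

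The point needing care is the sign. The factor \(\sqrt{z^2-1}\) corresponds to \(\tfrac12(\sigma-\sigma^{-1})\), and this is odd under \(\sigma\mapsto\sigma^{-1}\). So \((\sqrt{\cdot})_-=-(\sqrt{\cdot})_+\), in the same way as in Remark~\ref{rem:removable-sing-U}. The minus sign between the two terms thus becomes a plus sign. The product \((\sqrt{z^2-1})_+^{2}\) equals \(z^2-1\), which yields the stated formula with the common prefactor \(\frac{1}{z^2-1}\frac{R(\infty)}{\phi'(\infty)}\). This bookkeeping of \(R_\pm\) and \(\phi_\pm\) as analytic continuations near \(\gamma\) is the step I expect to need the most attention, but it is mechanical.

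\textbf{The vanishing condition.} On \(\gamma\) we have \(|\phi_\pm|=1\) and \(|R_+|=|R_-|\). So \eqref{eq:vanishing_cond} holds exactly when
\[
\arg\!\Big(\frac{\phi_+^{n+1}R_-}{\phi_-^{n+1}R_+}\Big)\equiv\pi \pmod{2\pi}.
\]
Since \(R=\phi'/\extremal\) and \(g=\extremal/\extremal(\infty)\), we get \(\arg(R_-/R_+)=\beta-\arg(\phi'_+/\phi'_-)\). The tangent computation above gives \(e^{i\arg(\phi_+'/\phi_-')}=-\phi_+/\phi_-\), i.e. \(\arg(\phi'_+/\phi'_-)=\theta+\pi\). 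The condition therefore becomes \((n+1)\theta+\beta-\theta-\pi\equiv\pi\), that is, \(n\theta+\beta\in2\pi\mathbb Z\).

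By Lemma~\ref{lem:extreme-pts}, this happens precisely at \(z_0,\dots,z_n\). At the endpoints \(\pm1\) the same conclusion holds by continuity, since \(\theta,\beta\in\{0,2\pi\}\) there and \(\phi_+=\phi_-\).
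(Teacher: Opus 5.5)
Your proposal is correct and follows essentially the same route as the paper. Your factorization $F=\Phi\,(\extremal\circ\varphi)/(\extremal(\infty)\,s\Phi'(s))$ is exactly the paper's $F=\Phi G/H$ with $H(\sigma)=\sigma\Phi'(\sigma)$, the asymptotic formula is the same substitution into Theorem~\ref{thm:dual-Faber-asymp} with the sign flip coming from the oddness of $\tfrac12(\sigma-\sigma^{-1})$, and your direct phase computation via $R=\phi'/\extremal$ and the tangent identity unpacks the paper's appeal to the jump conditions \eqref{eq:modulus-match}--\eqref{eq:phase-jump}. The endpoint step is stated loosely: the simplest justification is that $1/R_\pm=\extremal_\pm/\phi'_\pm$ vanishes at $\pm1$, so both terms in \eqref{eq:vanishing_cond} are zero there (a point the paper leaves implicit).
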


\begin{proof}
First of all, we verify that \(f\) is an admissible weight. We have already seen that
\[G(\sigma)\coloneqq g\circ\varphi(\sigma)\]
has an analytic extension to a neighborhood of \(\overline{D}\).
Note further that
\[
h(z)\coloneqq \sqrt{z^2-1} \phi'(z)
\]
satisfies
\[
H(\sigma) = h\circ\varphi(\sigma) 
= \frac{\sigma-\sigma^{-1}}{2}\phi'\circ\varphi(\sigma) 
=\sigma \Phi'(\sigma)
\]
which clearly has an analytic extension and doesn't 
vanish in a neighborhood of \(\overline{D}\).
Since
\begin{equation}
\label{eq:fgh-rel}
f(z) = \phi(z)g(z)/h(z)
\end{equation}
we find that 
\[
F = f\circ\varphi = \Phi(\sigma)G(\sigma)/H(\sigma)
\] 
has an analytic extension to a neighborhood of \(\overline{D}\).
The corresponding Faber polynomial satisfies
\begin{align*}
F_{n-1}(\phi f/\sqrt{\zeta^2-1},z) &
= \frac{1}{\sqrt{z^2-1}}\Bigl(f_+(z)\phi_+(z)^{n}-f_-(z)
\phi_-(z)^{n}\Bigr)+O(r^n) \\
& = \frac{1}{z^2-1}\frac{R(\infty)}{\phi'(\infty)}
\Bigl(\frac{\phi_+(z)^{n+1}}{R_+(z)}+\frac{\phi_-(z)^{n+1}}{R_-(z)}\Bigr)+O(r^n).
\end{align*}
This proves the claimed asymptotic formula.

The conclusion concerning the extreme points 
follows directly from how we set up the jump problem: 
by the phase condition \eqref{eq:phase-jump}, the maxima of
\(\lvert g_+\phi_+^n+g_-\phi_-^n\rvert\) occur precisely at the minima of
\(\lvert f_+\phi_+^n-f_-\phi_-^n\rvert\), while the
matching condition \eqref{eq:modulus-match} for the moduli \(|f_\pm|\)
forces the minima to be zero. 
Substitution of \eqref{eq:dual-weight} gives
precisely \eqref{eq:vanishing_cond}.
\end{proof}

\subsection{The derivative \texorpdfstring{\(\mathcal E'(z)\)}{} and the lower bound}
\label{ss:derivative}
We keep the function \(f\) from \eqref{eq:dual-weight}, 
and introduce the polynomial
\begin{equation}
\label{eq:E0-def}
\mathcal{E}_0(z)=(z^2-1)
F_{n-1}\big(f\phi/\sqrt{\zeta^2-1},z\big),
\end{equation}
which should be a good approximation of the
polynomial \(\mathcal E_{n+1}\) with zeros at the
extreme points of \(F_n(g,z)\) along the arc \(\gamma\).

Let \(\Phi=\phi\circ\varphi\), and note that
\[
f\circ\varphi(\sigma)
=\frac{R(\infty)}{\phi'(\infty)}
\frac{\Phi(\sigma)}
{(\tfrac{\sigma-\sigma^{-1}}{2})R(\varphi(\sigma))}.
\]
We also define the auxiliary function
\[
\mathcal R(\sigma)\coloneqq
\Bigl(\frac{\sigma-\sigma^{-1}}{2}\Bigr)R(\varphi(\sigma)).
\]
Since \(\gamma\) is assumed to be analytic, \(\Phi\) is analytic and nonzero
in a neighborhood of \(\partial D\), and the same holds for
\(1/\mathcal R\), cf.\ Lemma~\ref{lem:analyticity-g}.
We can therefore define the two functions
\begin{equation}
W(\sigma)\coloneqq
\frac{R(\infty)}{\phi'(\infty)}
\left(\frac{\Phi(\sigma)^{n+1}}{\mathcal R(\sigma)}
-\frac{\Phi(\sigma^{-1})^{n+1}}{\mathcal R(\sigma^{-1})}\right),
\label{eq:def_W}
\end{equation}
and
\begin{equation}
U(z)\coloneqq\sqrt{z^2-1}\,W(\varphi^{-1}(z))
=\frac{R(\infty)}{\phi'(\infty)}
\left(\frac{\phi_+(z)^{n+1}}{R_+(z)}+\frac{\phi_-(z)^{n+1}}{R_-(z)}\right).
\label{eq_def_U}
\end{equation}
By Remark~\ref{rem:removable-sing-U}, \(U\) is holomorphic in a
neighborhood \(\mathcal N\) of \(\gamma\), and, by shrinking this
neighborhood, Lemma~\ref{lem:Faber-dual-asymp} yields
\begin{equation}
\label{eq:E0-U-est}
|\mathcal E_0(z)-U(z)|
=O\bigl(|z^2-1|r^n\bigr)
\end{equation}
for some \(0<r<1\), uniformly on \(\mathcal N\).
By a Cauchy estimate, we therefore have
\[
\mathcal E_0'(z)=U'(z)+O(r^n)
\]
uniformly on a slightly smaller neighborhood.
Moreover, the zeros of \(U\) coincide exactly with the \(n+1\)
extreme points \(z_j=z_{j,n}\) of
\[
g_+(z)\phi_+(z)^n+g_-(z)\phi_-(z)^n
\]
along \(\gamma\).

To calculate the derivative of \(U\) at \(z_j\), note first that
\[
\frac{d}{dz}\varphi^{-1}(z)
=\frac{\varphi^{-1}(z)}{\sqrt{z^2-1}},
\]
and therefore
\begin{equation}
\label{eq:U-prim-decomp}
U'(z)=\frac{z}{\sqrt{z^2-1}}W(\varphi^{-1}(z))
+W'(\varphi^{-1}(z))\varphi^{-1}(z).
\end{equation}
Differentiating \(W\) directly yields
\begin{align}
W'(\sigma)
&=(n+1)\frac{R(\infty)}{\phi'(\infty)}
\left(\frac{\Phi'(\sigma)\Phi(\sigma)^n}{\mathcal R(\sigma)}
+\frac{\sigma^{-2}\Phi'(\sigma^{-1})
\Phi(\sigma^{-1})^n}{\mathcal R(\sigma^{-1})}\right)
\\
&\quad
-\frac{R(\infty)}{\phi'(\infty)}
\left(\frac{\Phi(\sigma)^{n+1}\mathcal R'(\sigma)}{\mathcal R(\sigma)^2}
+\frac{\sigma^{-2}\Phi(\sigma^{-1})^{n+1}
\mathcal R'(\sigma^{-1})}{\mathcal R(\sigma^{-1})^2}\right).
\end{align}
Note in particular that, since \(|\Phi(\sigma)|=1\)
for \(\sigma\in\partial D\) and \(1/\mathcal R\) is
analytic, the last term is bounded by a constant independent of \(n\)
on \(\partial D\). We find that
\begin{multline}
\label{eq:W-prim-asymp}
W'(\varphi^{-1}(z))\varphi^{-1}(z)
\\
=(n+1)\frac{R(\infty)}{\phi'(\infty)}
\left(
\frac{\phi_+'(z)\phi_+(z)^n}{R_+(z)}
+\frac{\phi_-'(z)\phi_-(z)^n}{R_-(z)}
\right)+O(1),
\end{multline}
where the \(O(1)\)-term is uniformly bounded for \(z\in\gamma\). In particular,
\[
\left|W'(\varphi^{-1}(z_j))\varphi^{-1}(z_j)\right|
=
\frac{n+1}{\extremal(\infty)}+O(1),\qquad j=0,1,\dotsc,n.
\]
The remaining term in \eqref{eq:U-prim-decomp},
\[
\frac{z}{\sqrt{z^2-1}}W(\varphi^{-1}(z)),
\]
vanishes at all the points \(z_j\neq\pm1\). Since additionally
\(W(\pm1)=0\), we find that
\begin{equation}
\label{eq:deriv-term-end-pt}
\begin{aligned}
\lim_{z\to\pm1}
\frac{z}{\sqrt{z^2-1}}W(\varphi^{-1}(z))
&=\lim_{\sigma\to\pm1}
\frac{\varphi(\sigma)}{\tfrac{\sigma-\sigma^{-1}}{2}}W(\sigma)
\\
&=\lim_{\sigma\to\pm1}
\frac{2\sigma\varphi(\sigma)}{(\sigma-1)(\sigma+1)}W(\sigma)
\\
&=\pm W'(\pm1)=\lim_{z\to\pm1}W'(\varphi^{-1}(z))\varphi^{-1}(z).
\end{aligned}
\end{equation}
Consequently,
\[
|U'(\pm1)|=\left|2\lim_{z\to\pm1}W'(\varphi^{-1}(z))\varphi^{-1}(z)\right|
=\frac{2(n+1)}{\extremal(\infty)}+O(1).
\]
Combining these findings, we conclude that there exists \(C>0\) such that
\begin{equation}
\label{eq:ineq-E0}
|\mathcal E_0'(z_j)| \geq \frac{n+1}{\extremal(\infty)}-C,
\qquad j=0,\dotsc,n.
\end{equation}

The polynomial \(\mathcal E_0\) has its
interior zeros very close to, but not necessarily exactly
at, the extremal points \(z_j\), and these zeros may in fact not lie
exactly on the contour \(\gamma\).
We let \(\mathcal E=\mathcal E_{n+1}\) denote the polynomial whose
zeros are exactly the points \(\{z_j\}_{j=0}^n\) and which has the same
leading coefficient as \(\mathcal E_0\).

\begin{lem}
\label{lem:E-prim}
There exists \(c>0\) such that
\[
\mathcal E'(z_j)=\mathcal E_0'(z_j)\bigl(1+O(e^{-cn})\bigr)
\]
uniformly for \(0\leq j\leq n\). In particular, there exists a constant
\(C\) depending only on \(\gamma\) such that
\[
|\mathcal E'(z_j)|\geq\frac{n+1}{\extremal(\infty)}-C,
\qquad 0\leq j\leq n.
\]
\end{lem}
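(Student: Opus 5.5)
We compare \(\mathcal E_0\) and \(\mathcal E\) through their zeros. The plan is to show that \(\mathcal E_0\) has exactly \(n+1\) zeros \(w_0,\dots,w_n\), one within distance \(O(e^{-cn})\) of each \(z_j\), and then control the ratio of the two derivatives.

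\textbf{Step 1: locating the zeros of \(\mathcal E_0\).} Fix \(j\) and consider the disk \(D_j\) centred at \(z_j\) of radius \(\delta_n = r^{n/2}\). By Lemma~\ref{lem:extreme-pts} this radius is much smaller than the separation \(c/n^2\). On \(\gamma\) and nearby, \(U\) is holomorphic and \(|U'|\lesssim n\), with \(|U'(z_j)|\ge (n+1)/\extremal(\infty)-C\). A Cauchy estimate gives \(|U''|\lesssim n^2\) on a slightly smaller neighbourhood. Hence on \(\partial D_j\) we have
\[
|U(z)|\ge |U'(z_j)|\delta_n - Cn^2\delta_n^2 \gtrsim n\delta_n .
\]
This dominates the error \(|\mathcal E_0-U|=O(r^n)\) from \eqref{eq:E0-U-est}. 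Rouché's theorem then gives exactly one zero \(w_j\) of \(\mathcal E_0\) in \(D_j\), with \(|w_j-z_j|=O(r^n/n)\). At the endpoints \(z_0,z_n=\pm1\) the estimate is even better, since \(\mathcal E_0\) and \(U\) both vanish there and the error is \(O(|z^2-1|r^n)\); the argument is the same.

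Since \(\mathcal E_0\) has degree \(n+1\) (its leading coefficient is \(\phi'(\infty)^{n}\cdot\)const, nonzero), these are all of its zeros. We may therefore write \(\mathcal E_0(z)=a\prod_k(z-w_k)\) and \(\mathcal E(z)=a\prod_k(z-z_k)\).

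\textbf{Step 2: comparing the derivatives.} At a zero, \(\mathcal E'(z_j)=a\prod_{k\ne j}(z_j-z_k)\). For \(\mathcal E_0\) we use \(\mathcal E_0'(z_j)=\mathcal E_0'(w_j)+O(n^2|w_j-z_j|)\), together with \(\mathcal E_0'(w_j)=a\prod_{k\ne j}(w_j-w_k)\). This gives
\[
\frac{\mathcal E'(z_j)}{\mathcal E_0'(w_j)}=\prod_{k\ne j}\frac{z_j-z_k}{w_j-w_k}
=\prod_{k\ne j}\Bigl(1+O\Bigl(\frac{r^n/n}{|z_j-z_k|}\Bigr)\Bigr).
\]
By the separation bound \(|z_j-z_k|\ge c|j-k|/n^2\) (the crude \(c/n^2\) bound applied to each factor already suffices), each factor is \(1+O(n^2r^n)\). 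The product over \(n\) factors is therefore \(1+O(n^3r^n)=1+O(e^{-cn})\).

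Combining this with \(\mathcal E_0'(w_j)=\mathcal E_0'(z_j)(1+O(n r^n))\), which is valid because \(|\mathcal E_0'(z_j)|\gtrsim n\) by \eqref{eq:ineq-E0}, yields the claimed relation. The lower bound then follows immediately from \eqref{eq:ineq-E0}, after enlarging \(C\).

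\textbf{Main obstacle.} The delicate point is Step 1: ensuring the Rouché disks are uniform in \(j\), especially near the endpoints. There the derivative bound on \(U'\) must come from holomorphy of \(U\) on a fixed neighbourhood \(\mathcal N\), as in Remark~\ref{rem:removable-sing-U}, and we must also rule out zeros of \(\mathcal E_0\) far from \(\gamma\). The degree count settles the latter, provided the disks are disjoint, which the separation lemma guarantees.
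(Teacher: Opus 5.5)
Your proposal follows the paper's proof. You use Rouch\'e on small discs around each \(z_j\), disjointness from the \(c/n^2\) separation in Lemma~\ref{lem:extreme-pts}, a degree count to account for all zeros of \(\mathcal E_0\), the product formula for \(\mathcal E_0'(w_j)/\mathcal E'(z_j)\), and a second-derivative bound to move from \(w_j\) back to \(z_j\). The only difference is cosmetic: your radius is \(r^{n/2}\), the paper's is \(C_0r^n/n\).

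One estimate is wrong as justified, though harmlessly. You cannot get \(|U''|\lesssim n^2\) from a Cauchy estimate on a fixed neighbourhood of \(\gamma\). At a fixed distance from the arc, \(|\phi_+|^{n+1}\) is exponentially large while \(|\phi_-|<1\), so \(U\) is exponentially large there. Holomorphy on \(\mathcal N\) therefore gives no polynomial bounds. \(U\) is uniformly bounded only where \(|\phi_\pm|\le 1+O(1/n)\), and near \(\pm1\) that is just an \(O(n^{-2})\)-neighbourhood of \(\gamma\). Cauchy estimates on discs of radius \(\delta n^{-2}\) then give \(|U''|=O(n^4)\), which is what the paper uses.

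The \(n^2\) bound is in fact false at the endpoints. Already for \([-1,1]\), where \(\mathcal E_0(\cos t)=-2\sin t\,\sin nt\), one has \(|\mathcal E_0''(1)|\asymp n^3\). Since your disc radius and \(|w_j-z_j|\) are exponentially small, replacing \(n^2\) by \(n^4\) in Step~1 and in the estimate for \(\mathcal E_0'(z_j)-\mathcal E_0'(w_j)\) changes nothing.

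Likewise, Rouch\'e on the disc of radius \(r^{n/2}\) gives only \(|w_j-z_j|<r^{n/2}\), not \(O(r^n/n)\) directly. The sharper bound needs one more line: \(O(r^n)=|U(w_j)-\mathcal E_0(w_j)|=|U(w_j)|\gtrsim n|w_j-z_j|\). But \(r^{n/2}\) already suffices for the conclusion of the lemma.
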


\begin{proof}
Let us first quantify how close the zeros of
\(\mathcal E_0\) are to \(z_j\).
The endpoints \(z_0=1\) and \(z_n=-1\) are already joint zeros
of the two polynomials, so it suffices to consider
\(j=1,\dotsc,n-1\).
Since
\[
\max\{|\phi_+(z)|,|\phi_-(z)|\}\leq1+O(n^{-1})
\]
whenever \(\operatorname{dist}(z,\gamma)\leq n^{-2}\),
\eqref{eq_def_U} shows that \(U(z)\) is uniformly bounded within a
distance \(O(n^{-2})\) of the arc \(\gamma\).
For \(z\) close to \(z_j\), Taylor's formula gives
\[
U(z)=U'(z_j)(z-z_j)+O\bigl(n^4|z-z_j|^2\bigr),
\]
where the error estimate follows from standard Cauchy estimates, which
give, for some \(\delta>0\),
\begin{equation}
\label{eq:bound-second-deriv}
\max_{|z-z_j|\leq\delta n^{-2}}|U''(z)|=O(n^4).
\end{equation}
Thus, for
\[
|z-z_j|=C_0 r^n n^{-1},
\]
we obtain
\[
|U(z)|\geq |U'(z_j)|C_0 r^n n^{-1}-O\bigl(n^2r^{2n}\bigr).
\]
The second term is exponentially smaller than the first. Since
\eqref{eq:ineq-E0} and the estimate \(\mathcal E_0'-U'=O(r^n)\) 
give \(|U'(z_j)|\gtrsim n\), while the estimate \eqref{eq:E0-U-est} shows that
\(|U(z)-\mathcal E_0(z)|=O(r^n)\), choosing \(C_0\) sufficiently
large gives
\[
|U(z)|>|U(z)-\mathcal E_0(z)|
\]
on the circle \(|z-z_j|=C_0 r^n n^{-1}\). Rouch\'e's theorem therefore shows that
\(\mathcal E_0\) has a zero \(w_j\) satisfying
\[
|z_j-w_j|=O(r^n n^{-1}).
\]

We next perturb the zeros of \(\mathcal E_0\) to the
actual locations \(z_j\in\gamma\), and need to ensure that this does not
ruin our estimates. From Lemma~\ref{lem:extreme-pts}, we have
\begin{equation}
\label{eq:dist-zj-zk}
|z_j-z_k|\geq\frac{c}{n^2},
\qquad j\neq k.
\end{equation}
Thus, for \(n\) sufficiently large, the discs above are disjoint.
Together with the exact zeros at the endpoints, they account for all
\(n+1\) zeros of \(\mathcal E_0\), and hence
\begin{equation}
\mathcal E_0(z)=\phi'(\infty)^n\prod_{j=0}^n(z-w_j).
\end{equation}
The estimate \eqref{eq:E0-U-est}, together with
\eqref{eq:bound-second-deriv}, gives
\(\mathcal E_0''=O(n^4)\) between \(z_j\) and \(w_j\).
Thus,
\[
\mathcal E_0'(z_j)=\mathcal E_0'(w_j)
+\int_{w_j}^{z_j}\mathcal E_0''(s)\,ds
=\mathcal E_0'(w_j)+O(n^3r^n),
\]
and consequently
\[
\mathcal E_0'(w_j)=\mathcal E_0'(z_j)\bigl(1+O(e^{-cn})\bigr).
\]
Next,
\begin{align*}
\frac{\mathcal E_0'(w_j)}{\mathcal E'(z_j)}
&=\prod_{\substack{k=0\\k\neq j}}^n\frac{w_j-w_k}{z_j-z_k}
\\
&=\prod_{\substack{k=0\\k\neq j}}^n
\left(1+\frac{(w_j-z_j)+(z_k-w_k)}{z_j-z_k}\right)
\\
&=\prod_{\substack{k=0\\k\neq j}}^n
\left(1+\frac{O(r^n/n)}{z_j-z_k}\right).
\end{align*}
The separation \eqref{eq:dist-zj-zk} of zeros therefore gives
\[
\left|\frac{\mathcal E_0'(w_j)}{\mathcal E'(z_j)}-1\right|
\leq Cn^2r^n=O(e^{-cn}).
\]
Combining the last two estimates with \eqref{eq:ineq-E0} proves the
lemma.
\end{proof}

\subsection{Asymptotics of Widom factors}
We are now ready to prove the first of our two main results.

\begin{proof}[Proof of Theorem~\ref{thm:main}]
The upper bound
\[
\limsup_{n\to \infty} \mathcal{W}_n(\gamma)\le 1/\extremal(\infty)
\]
was established above in Theorem~\ref{thm:optimal_faber}.
To obtain the matching lower bound we first appeal to Theorem~\ref{thm:opm}, 
or more specifically to its reformulation \eqref{eq:cor-opm},
which established that
\[
\mathcal{W}_n(\gamma)\ge \Big(\sum_{j=0}^n \frac{1}{|\mathcal E'(z_j)|}\Big)^{-1},
\]
where \(\{z_j\}_{j=0}^n\) are any \(n+1\) distinct points on \(\gamma\) 
and where 
\[
\mathcal E(z)=\phi'(\infty)^n\prod_{j=0}^n (z-z_j).
\]
As above, we pick the points \(\{z_j\}_{j=0}^n\) as the extreme points 
from Lemma~\ref{lem:extreme-pts}. 
With this choice of \(\mathcal E\), Lemma~\ref{lem:E-prim} gives the upper bound
\[
\frac{1}{|\mathcal E'(z_j)|}\le \frac{\extremal(\infty)}{n+1} \big(1+O(1/n)\big)
\] 
so that
\begin{align}
\mathcal{W}_n(\gamma)\ge \Big(\sum_{j=0}^n \frac{1}{|\mathcal E'(z_j)|}\Big)^{-1} 
&\ge \Big(\frac{1}{n+1}\sum_{j=0}^n\extremal(\infty)\Big)^{-1}\big(1+O(1/n)\big) \\
& = 1/\extremal(\infty) + O(1/n).
\end{align}
Taking the lower limit as \(n\to\infty\) establishes the lower bound for the Widom factors
\[
\liminf_{n\to\infty} \mathcal{W}_n(\gamma)\ge 1/\extremal(\infty),
\]
and hence the proof is complete.
\end{proof}

\section{Szeg\H{o}-Widom asymptotics}
% !TeX root = ../main.tex

\label{s:asymp}

\subsection{Stability of optimal prediction polynomials}
We now turn to our second main result, Theorem~\ref{thm:SzegoWidom}. 
We begin with a stability lemma for {\em optimal prediction polynomials}, that is,
the orthogonal polynomials associated with optimal prediction measures.

To formulate the result, we denote by \(P_n\) 
a sequence of monic polynomials of degree \(n\), and write
\[
M_n=\lVert P_n\rVert_\gamma.
\]
Suppose that \(z_0,z_1,\ldots,z_n\) are any \(n+1\) distinct
points on \(\gamma\) and that
\(\lambda_{0},\dots,\lambda_{n}\) are strictly positive weights summing to 1, 
and put 
\[
\mu_n=\sum_{0\le j\le n} \lambda_{j} \delta_{z_{j}}.
\]
It turns out that approximate orthogonality of \(P_n\) with respect to \(\mu_n\)
together with an almost extremality condition on the support of \(\mu_n\) implies that
\(P_n\) is close to the Chebyshev polynomial \(T_n\). Specifically, we assume that:

\begin{itemize}[leftmargin=.8cm] 
\item[{\rm (i)}] \(P_n\) is close to extremal on the \(z_j\). That is,
for some positive sequence \(\epsilon_n\to 0\),
\[
|P_n(z_{j})| \ge M_n-\epsilon_n,
\qquad 0\le j\le n.
\]

\item[{\rm (ii)}] \(P_n\) is almost orthogonal with respect to \(\mu_n\).
Specifically, for some positive sequence \(\delta_n\to 0\), we have
\[
\Big|\int_{\gamma} P_n(z)\overline{q(z)}d\mu_n(z) \Big|
\le \delta_n\,  \lVert q\rVert_{L^2(\mu_n)}
\]
for any polynomial \(q\) of degree at most \(n-1\).
\end{itemize}

\medskip

\begin{lem}
\label{lem:stab}
With the above notation and conditions {\rm (i)-(ii)} satisfied, we have
\[
\lVert T_n-P_n\rVert_{L^2(\mu_n)}
\le \delta_n  + \sqrt{\delta_n^2 + 2M_n\epsilon_n}.
\]
\end{lem}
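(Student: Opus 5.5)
The plan is to expand the square of the $L^2(\mu_n)$-norm of $T_n$ around $P_n$. Set $D_n\coloneqq T_n-P_n$. Since both $T_n$ and $P_n$ are monic of degree $n$, $D_n$ is a polynomial of degree at most $n-1$, so condition (ii) applies with $q=D_n$. We then sandwich $\lVert T_n\rVert_{L^2(\mu_n)}^2$ between a lower bound coming from (i)--(ii) and an upper bound coming from extremality of $T_n$. The result is a quadratic inequality in $x\coloneqq\lVert D_n\rVert_{L^2(\mu_n)}$.

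\textbf{Upper bound.} Since $\mu_n$ is a probability measure supported on $\gamma$, and $T_n$ minimizes the sup norm on $\gamma$ among monic polynomials of degree $n$,
\[
\lVert T_n\rVert_{L^2(\mu_n)}^2\le \lVert T_n\rVert_\gamma^2\le \lVert P_n\rVert_\gamma^2=M_n^2 .
\]

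\textbf{Lower bound.} We expand
\[
\lVert T_n\rVert_{L^2(\mu_n)}^2=\lVert P_n\rVert_{L^2(\mu_n)}^2+2\Re\int_\gamma P_n\overline{D_n}\,d\mu_n+\lVert D_n\rVert_{L^2(\mu_n)}^2 .
\]
By (ii), the cross term is at least $-2\delta_n x$. For the first term we use (i) pointwise on the support of $\mu_n$ and the fact that $\sum_j\lambda_j=1$, to get
\[
\lVert P_n\rVert^2_{L^2(\mu_n)}\ge M_n^2-2M_n\epsilon_n .
\]
This needs a small case distinction. If $\epsilon_n\le M_n$, then $|P_n(z_j)|^2\ge (M_n-\epsilon_n)^2\ge M_n^2-2M_n\epsilon_n$. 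If $\epsilon_n>M_n$, the right-hand side $M_n^2-2M_n\epsilon_n$ is negative and the bound is trivial.

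\textbf{The quadratic inequality.} Combining the two bounds gives
\[
x^2-2\delta_n x-2M_n\epsilon_n\le 0 .
\]
Hence $x$ lies below the larger root of this quadratic, namely $x\le \delta_n+\sqrt{\delta_n^2+2M_n\epsilon_n}$, which is the claimed estimate.

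I do not expect a genuine obstacle here. The only points requiring care are the observation that $T_n-P_n$ has degree at most $n-1$, which is what makes (ii) applicable, and the sign issue in (i) when $M_n-\epsilon_n<0$, handled by the case distinction above.
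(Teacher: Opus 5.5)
Your proposal is correct and follows essentially the same route as the paper: the same expansion of $\lVert T_n\rVert_{L^2(\mu_n)}^2$ around $P_n$, with (ii) applied to $q=T_n-P_n$, followed by the same quadratic inequality in $\lVert T_n-P_n\rVert_{L^2(\mu_n)}$. The only difference is that you bound $\lVert P_n\rVert_{L^2(\mu_n)}^2$ directly by $M_n^2-2M_n\epsilon_n$, with a case distinction, instead of passing through $(M_n-\epsilon_n)^2$; this makes explicit the sign issue that the paper's step $|P_n(z_j)|^2\ge (M_n-\epsilon_n)^2$ tacitly ignores.
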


\begin{proof}
Set \(D_n:=T_n-P_n\) and note that
\begin{equation}
\label{eq:main-identity}
\lVert T_n\rVert^2_{L^2(\mu_n)}=\lVert P_n\rVert^2_{L^2(\mu_n)}
+\lVert D_n \rVert^2_{L^2(\mu_n)}
+2\Re \big\langle D_n,P_n\big\rangle_{L^2(\mu_n)}
\end{equation}
Since \(T_n\) is the monic Chebyshev polynomial 
and since \(P_n\) is monic of degree \(n\),
\[
\lVert T_n\rVert_{L^2(\mu_n)}^2\le 
\|T_n\|_\gamma^2 \le \|P_n\|_\gamma^2 = M_n^2.
\]
By rearranging \eqref{eq:main-identity}, we thus get
\[
\lVert D_n\rVert_{L^2(\mu_n)}^2
\le
M_n^2 -\lVert P_n\rVert_{L^2(\mu_n)}^2
- 2\Re\, \langle D_n,P_n\rangle_{L^2(\mu_n)}.
\]
By assumption {\rm (i)}, \(P_n\) almost reaches 
its extremal modulus \(M_n\) on each of the mass 
points of \(\mu_n\), so
\[
|P_n(z_j)| \ge M_n-\epsilon_n,
\]
which gives the bound from below for the \(L^2\)-norm,
\[
\lVert P_n\rVert_{L^2(\mu_n)}^2
=\sum_{j=0}^n \lambda_j |P_n(z_j)|^2 \ge (M_n-\epsilon_n)^2.
\]
Since both \(T_n\) and \(P_n\) are monic of degree \(n\), the difference
\(D_n\) has degree at most \(n-1\).
Thus we may apply assumption {\rm (ii)} to \(q=D_n\), and get
\[
\Re\,\langle D_n,P_n\rangle_{L^2(\mu_n)}\le 
\big|\langle D_n,P_n\rangle_{L^2(\mu_n)}\big|
\le \delta_n \|D_n\|_{L^2(\mu_n)},
\]
leading to the upper bound
\[
\lVert D_n\rVert_{L^2(\mu_n)}^2
\le M_n^2-(M_n-\epsilon_n)^2 + 2\,\delta_n\|D_n\|_{L^2(\mu_n)}.
\]
Rearranging and completing the square,
\[
\begin{aligned}
\left(\lVert D_n\rVert_{L^2(\mu_n)} - \delta_n \right)^2 &
\le  M_n^2-(M_n-\epsilon_n)^2 + \delta_n ^2 \\
& = 2M_n\epsilon_n-\epsilon_n^2+ \delta_n ^2\\ 
&\le 2M_n\epsilon_n + \delta_n ^2.
\end{aligned}
\]
This gives that
\[
\lVert D_n\rVert_{L^2(\mu_n)}\le \delta_n  
+ \sqrt{2M_n\epsilon_n + \delta_n^2},
\]
completing the proof.
\end{proof}

Note that if \(\epsilon_n\) and \(\delta_n\) 
both vanish, then we have \(P_n=T_n\).

\subsection{A Marcinkiewicz--Zygmund inequality}
To go from stability in the discrete \(L^2(\mu_n)\)-norm 
to stability in \(L^2(\gamma,|dz|)\), we need a sampling inequality. 
The following result is distilled from \cite{ChuiZhong}*{Section~2}.
For the formulation, we need the notion of the Muckenhoupt 
{\em \(A_2\)-characteristic} of a weight function \(\omega(z)\):
Given a contour \(\Gamma\), we let

\begin{equation}\label{eq:def-A2-constant}
[\omega]_{A_2,\Gamma}=\sup_{I\subset \Gamma}
\Big(\frac{1}{|I|}\int_{I} \omega(z)\,|dz|\Big)
\Big(\frac{1}{|I|}\int_{I} \omega^{-1}(z)\,|dz|\Big),
\end{equation}
where the supremum is taken over all sub-arcs of \(\Gamma\).
It is well known that many singular integral operators are bounded on weighted
\(L^2\)-spaces precisely when \([\omega]_{A_2,\Gamma}\) is finite. 
In the proof of the below lemma, 
it enters as an upper bound for the norm
of the Cauchy transform on \(L^2(\gamma_n,\omega\,|dz|)\), 
where \(\gamma_n\) denotes the Green curve
\[
\gamma_n=\Big\{z\in \C: |\phi(z)|=1+\tfrac1n\Big\}.
\]

\begin{lem}
\label{lem:MZ}
Let \(z_0,\ldots,z_n\) be distinct points on \(\gamma\), 
ordered by increasing arc-length from one of the end-points,
such that
\begin{equation}
\label{eq:unif-sep}
|z_j-z_{j-1}|\ge c_0 \min\{d(z_j,\gamma_n), d(z_{j-1},\gamma_n)\}.
\end{equation}
Denote by \(\mathcal E\) the polynomial 
\(\mathcal E(z)=\phi'(\infty)^{n}\prod_{j=0}^n (z-z_j)\) and put
\(\omega_{\mathcal E}(z)=|\mathcal E(z)|^2\).
Then there exists a constant \(C\) depending only on \(c_0\) and \(\gamma\)
such that, for any polynomial \(P\) of degree \(n\),
\[
\int_{\gamma}|P(z)|^2\,|dz|\le 
C\frac{[\omega_{\mathcal E}]_{A_2,\gamma_n}^2}{n}\sum_{j=0}^n |P(z_j)|^2.
\]
\end{lem}

Lemma~\ref{lem:MZ} is taken from \cite{ChuiZhong}*{Section~2}.
The authors of that paper are mainly interested in finer sampling theory
for one particular (\enquote{\(L\)-shaped}) arc, and therefore do not
state their results in this generality. However, the proof works 
essentially verbatim for analytic arcs. In Appendix~\ref{s:MZ} below, 
we give a detailed proof sketch and a guide to the necessary modifications.

If we fix \(z_0,\ldots,z_n\) to be the extreme points of
\[
\big|g_+(z)\phi_+(z)^n + g_-(z)\phi_-(z)^n\big|
\]
along \(\gamma\) and if \(\mathcal E\) is the associated dual polynomial
normalized by capacity, then an elementary computation reveals that
the Muckenhoupt characteristics satisfy
\begin{equation}
\label{eq:A2-constant-E}
[\omega_{\mathcal E}]_{A_2,\gamma_n}\asymp \log n.
\end{equation} 
The uniform separation \eqref{eq:unif-sep}
also holds, and in fact we also have a corresponding bound from above.
Hence, for that particular choice of points, Lemma~\ref{lem:MZ}
together with the bound \eqref{eq:A2-constant-E}
gives
\begin{equation}
\label{eq:MZ-ineq}
\int_{\gamma}|P(z)|^2\,|dz|\le \frac{C(\log n)^2}{n}\sum_{j=0}^n |P(z_j)|^2.
\end{equation}
The verification of the bound \eqref{eq:A2-constant-E} on the Muckenhoupt characteristic 
and of the uniform separation \eqref{eq:unif-sep} are both elementary but quite lengthy, 
and for this reason we postpone their proofs to Appendix~\ref{s:geom-lemmas}, 
see Proposition~\ref{prop:A2-constant-E} and Lemma~\ref{lem:separation}. 

\subsection{Approximate orthogonality of \texorpdfstring{\(F_n(g,z)\)}{}}
The final step towards the proof of Theorem~\ref{thm:SzegoWidom}
is the verification of the conditions of Lemma~\ref{lem:stab} for
our trial polynomial \(F_n(g,z)\).
We let \(z_j\) and \(\mathcal E(z)\) be as above, and 
let \(\mu_n\) be the measure
\[
\mu_n=\sum_{j=0}^n \frac{1}{|\mathcal E'(z_j)|}\delta_{z_j}
\Big/ \sum_{j=0}^n \frac{1}{|\mathcal E'(z_j)|}.
\]

\begin{prop}
\label{prop:approx-orthogonality}
The weighted Faber polynomial \(F_n(g,z)\) satisfies
\[
|F_n(g,z_j)|=\lVert F_n(g,\cdot) \rVert_{\gamma} + O(e^{-cn}),\qquad 0\le j\le n,
\]
as well as the approximate orthogonality for \(\deg(q)<n\)
\[
\Big|\int F_n(g,z)\overline{q(z)}d\mu_n(z)\Big|
\le\frac{C}{n}\lVert q\rVert_{L^2(\mu_n)}.
\]
\end{prop}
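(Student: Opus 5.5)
First, the extremality claim. By \eqref{eq:Faber_asymptotics_arc}, which holds with an exponentially small error for analytic \(\gamma\) by Remark~\ref{rem:reg-prel}, we have \(F_n(g,z)=g_+\phi_+^n+g_-\phi_-^n+O(r^n)\) uniformly on \(\gamma\). The main term has modulus at most \(|g_+|+|g_-|=1/\extremal(\infty)\), with equality exactly at the points \(z_j\) (Lemma~\ref{lem:extreme-pts}). Hence \(|F_n(g,z_j)|=1/\extremal(\infty)+O(r^n)\) and \(\lVert F_n(g,\cdot)\rVert_\gamma\le 1/\extremal(\infty)+O(r^n)\). Since \(\lVert F_n(g,\cdot)\rVert_\gamma\ge |F_n(g,z_0)|\), both quantities agree up to \(O(e^{-cn})\), which proves the first claim.

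For the orthogonality, I would compare \(F_n(g,\cdot)\) with the exact Chebyshev polynomial on \(X=\{z_j\}\) from Theorem~\ref{thm:opm}. Put \(Q(z)=\sum_k \frac{1}{|\mathcal E'(z_k)|}\frac{\mathcal E(z)}{z-z_k}\), so that \(Q(z_k)=\mathcal E'(z_k)/|\mathcal E'(z_k)|\). The proof of Theorem~\ref{thm:opm} shows that \(Q\perp \mathcal P_{n-1}\) in \(L^2(\mu_n)\). The key observation is that \(F_n(g,z_j)\) is, up to a common positive factor and a small error, equal to \(\mathcal E'(z_j)/|\mathcal E'(z_j)|\). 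I would verify this from \eqref{eq:W-prim-asymp}. At \(z_j\) we have \(\phi_+^{n+1}/R_+=-\phi_-^{n+1}/R_-\), so
\[
U'(z_j)=(n+1)\frac{R(\infty)}{\phi'(\infty)}\frac{\phi_+(z_j)^{n+1}}{R_+(z_j)}\Bigl(\frac{\phi_+'}{\phi_+}-\frac{\phi_-'}{\phi_-}\Bigr)(z_j)+O(1).
\]
We also have \(g_\pm=\phi_\pm'/(\extremal(\infty)R_\pm)\), and \(\phi_\pm'/\phi_\pm\) are antiparallel by the tangent identity in Section~\ref{s:jump-dual}. From these I expect \(\arg \mathcal E'(z_j)=\arg F_n(g,z_j)+c_n+O(1/n)\) with a \(j\)-independent constant phase \(c_n\). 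The endpoints need a separate check via \eqref{eq:deriv-term-end-pt}. Lemma~\ref{lem:E-prim} lets us pass from \(U'\) to \(\mathcal E'\) with exponentially small error.

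Granting this, write \(F_n(g,z_j)=\frac{e^{ic_n}}{\extremal(\infty)}Q(z_j)+e_j\) with \(|e_j|=O(1/n)\) uniformly in \(j\). For \(\deg q<n\), orthogonality of \(Q\) kills the main term. What remains is bounded by Cauchy--Schwarz:
\[
\Big|\sum_j \mu_n(\{z_j\})e_j\overline{q(z_j)}\Big|\le \max_j|e_j|\,\lVert q\rVert_{L^2(\mu_n)}\le \frac Cn\lVert q\rVert_{L^2(\mu_n)}.
\]

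The main obstacle is the phase-matching step. One must show that the relative phases of \(F_n(g,z_j)\) and \(\mathcal E'(z_j)\) agree up to \(O(1/n)\) uniformly in \(j\), including near the endpoints where \(\sqrt{z^2-1}\) degenerates. There the \(O(1)\) term in \(U'\) must be compared to the main term \(\asymp n\), which is what gives the relative error \(O(1/n)\). To handle this, I would work entirely in the \(\sigma\)-plane with \(W\) and \(\Phi\), where everything is analytic on a neighborhood of \(\partial D\), and use \(\sigma\Phi'(\sigma)/\Phi(\sigma)>0\) on \(\partial D\) to fix the phases.
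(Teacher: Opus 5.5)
Your route is the paper's. You get the first claim from the exponentially accurate asymptotics \eqref{eq:Faber_asymptotics_arc} at the extreme points; here you are slightly more careful than the paper, which quotes the $O(n^{-\varepsilon})$ version \eqref{eq:faber_pol_abs}. You get the second claim by matching $F_n(g,z_j)$ with $\extremal(\infty)^{-1}Q(z_j)$, where $Q(z_j)=\mathcal E'(z_j)/|\mathcal E'(z_j)|$ is the orthogonal polynomial from Theorem~\ref{thm:opm}, followed by Cauchy--Schwarz.

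\textbf{The gap.} The one step carrying the content of the proposition, the phase matching, is left as ``I expect'' and ``granting this''. The tool you propose for it is false. The quantity $\sigma\Phi'(\sigma)/\Phi(\sigma)$ is positive on $\partial D$ only if $\partial D$ is a circle centred at $0$, i.e.\ only if $\gamma=[-1,1]$. For a circular arc, for instance, $\partial D$ is a circle through $\pm1$ centred at some $ia\neq 0$. Then $\Phi(\sigma)=(\sigma-ia)/\sqrt{1+a^2}$ and $\sigma\Phi'/\Phi=\sigma/(\sigma-ia)$, which is not real at $\sigma=1$. On $\partial D$ the phase of $\Phi'/\Phi$ is governed by the tangent direction, not by $\sigma$.

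\textbf{The fix.} The step closes by a direct substitution using ingredients you already wrote down, and this is exactly what the paper does. Since $g_\pm=\phi'_\pm/(\extremal(\infty)R_\pm)$ and $R(\infty)/\phi'(\infty)=1/\extremal(\infty)$, formula \eqref{eq:W-prim-asymp} reads literally
\[
W'(\varphi^{-1}(z))\,\varphi^{-1}(z)=(n+1)\bigl(g_+(z)\phi_+(z)^n+g_-(z)\phi_-(z)^n\bigr)+O(1),
\]
uniformly on $\gamma$. The remainder is uniform because $1/\mathcal R$ is analytic near $\partial D$, so $\sqrt{z^2-1}$ causes no degeneracy.
\begin{itemize}
\item At interior $z_j$, the first term in \eqref{eq:U-prim-decomp} vanishes exactly. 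Hence $U'(z_j)=(n+1)F_n(g,z_j)+O(1)$.
\item At $z_j=\pm1$, \eqref{eq:deriv-term-end-pt} gives $U'(z_j)=2(n+1)F_n(g,z_j)+O(1)$.
\end{itemize}
Your factored expression obtained from the vanishing condition is the same quantity. Applying the same manipulation to $g_+\phi_+^n+g_-\phi_-^n$ at $z_j$ produces the identical factor, so the comparison is tautological.

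Combine this with Lemma~\ref{lem:E-prim}, the estimate $\mathcal E_0'=U'+O(r^n)$, and $|F_n(g,z_j)|=1/\extremal(\infty)+O(r^n)$. These give $\mathcal E'(z_j)/|\mathcal E'(z_j)|=F_n(g,z_j)/|F_n(g,z_j)|+O(1/n)$ uniformly in $j$. So your constant phase is $c_n=0$. No tangent or antiparallel argument is needed for the phase, and the endpoints differ only by the harmless factor $2$ in modulus. With this step filled in, the rest of your argument goes through.
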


\begin{proof}
Let \(F_n=F_n(g,z)\) be the near-optimal Faber polynomial
\[
F_n(g,z)=\frac{1}{2\pi i}\int_{C}\frac{g(w)\phi^n(w)}{w-z}dw,
\qquad g(z) =\extremal(z)/\extremal(\infty).
\]
Since \(|g_+|+|g_-|\) is constant along \(\gamma\), 
the asymptotics \eqref{eq:faber_pol_abs} together with 
the defining property of the extreme points \(\{z_j\}_{j=0}^n\) 
from Lemma~\ref{lem:extreme-pts} imply that
\begin{align*}
|F_n(g,z_{j})| = \|F_n\|_\gamma + O(e^{-cn}),
\end{align*}
which verifies the first assertion.

It thus remains to verify the approximate orthogonality of \(F_n(g,z)\).
To this end, recall the definitions \eqref{eq:E0-def}, \eqref{eq_def_U} 
of \(\mathcal E_0\) and \(U\),
respectively. Lemma~\ref{lem:E-prim} implies that
\[
\mathcal E'(z_j)=\mathcal E_0'(z_j)\big(1+O\big(e^{-cn}\big)\big)
\]
for some \(c>0\), and after possibly adjusting \(c\), we have that
\[
\mathcal E'(z_j)=U'(z_j)\big(1+O\big(e^{-cn}\big)\big).
\]
The asymptotics \eqref{eq:Faber_asymptotics_arc} of \(F_n(g,z)\) 
applied with the above choice of \(g\) combined with
the asymptotics of \(U'\) given by \eqref{eq:U-prim-decomp} and 
\eqref{eq:W-prim-asymp} gives that
\[
F_n(g,z_{j}) = \frac{U'(z_{j})}{n+1}+ O\Big(\frac{1}{n}\Big)
= \frac{\mathcal E'(z_{j})}{n+1} + O\Big(\frac{1}{n}\Big)
\]
for all interior extreme points \(z_j\). 
Furthermore, we can compute
\begin{align}
\left|\frac{1}{\extremal(\infty)}\frac{\mathcal E'(z_{j})}{|\mathcal E'(z_{j})|} 
- \frac{\mathcal E'(z_{j})}{n+1}\right| 
= \frac{1}{n+1} \left| |\mathcal E'(z_{j})| 
- \frac{n+1}{\extremal(\infty)}\right| = O\Big(\frac{1}{n}\Big).
\end{align}
which implies that
\begin{align}
F_n(g,z_{j})=\frac{1}{\extremal(\infty)}
\frac{\mathcal E'(z_{j})}{|\mathcal E'(z_{j})|} 
+ O\Big(\frac{1}{n}\Big)
\end{align}
for all the extrema \(z_j\) except the end-points. 
By Theorem~\ref{thm:opm}, it follows that 
$\mathcal E'(z_{j})/|\mathcal E'(z_{j})|$ are the values of the $n$th 
orthogonal polynomial w.r.t. the measure
\[
\mu_n =\sum_{j=0}^n\frac{1}{|\mathcal E'(z_{j})|} \delta_{z_{j}}
\bigg/\sum_{j=0}^n\frac{1}{|\mathcal E'(z_{j})|}.
\]

The end-points are treated similarly, but the asymptotics of \(\mathcal E_0'\) is multiplied
by a factor 2 there, cf.\ \eqref{eq:U-prim-decomp}--\eqref{eq:deriv-term-end-pt}.
This leaves the value \(\mathcal E'(z_{j})/|\mathcal E'(z_{j})|\) unchanged to leading order,
so we get that also
\[
F_n(g,z_j)=\frac{1}{\extremal(\infty)}\frac{\mathcal E'(z_{j})}{|\mathcal E'(z_{j})|}
+O\Big(\frac1n\Big),\qquad j=0,\ldots,n.
\]
This shows that, for any polynomial \(q\) of degree at most \(n-1\), we have
\begin{align}
\bigg|\int_{\gamma} F_n(g,z) \overline{q(z)}\,d\mu_n(z)\bigg| = 
O\Big(\frac{1}{n}\lVert q\rVert_{L^2(\mu_n)}\Big)
\end{align}
which completes the proof.
\end{proof}

\subsection{Szeg\H{o}-Widom asymptotics of Chebyshev polynomials}
We are ready for the proof of our second and final main result, which
gives strong \(L^2\) and Szeg\H{o}--Widom asymptotics of 
the Chebyshev polynomials on an analytic arc.

\begin{proof}[Proof of Theorem~\ref{thm:SzegoWidom}]
Without loss of generality, we may assume that \(\capacity(\gamma)=1\).
By Proposition~\ref{prop:approx-orthogonality} and Lemma~\ref{lem:stab}, 
the Faber polynomial \(F_n(g,z)\) satisfies
\[
\big\lVert T_n- F_n(g,\cdot)\big\rVert_{L^2(\mu_n)}\le \frac{C}{n}.
\]
We can further control the standard \(L^2\)-norm of the difference using 
the Marcinkiewicz--Zygmund inequality \eqref{eq:MZ-ineq}, 
\[
\begin{aligned}
\big\lVert T_n-F_n(g,\cdot)\big\rVert_{L^2(\gamma,|dz|)}& \le 
\Big(\frac{C(\log n)^2}{n}\sum_{j=0}^{n}|T_n(z_j)-F_n(g,z_j)|^2\Big)^{1/2} \\
& \le  C'\log  n\, \lVert T_n-F_n(g,\cdot)\rVert_{L^2(\mu_n)}\\
&\le C''\log n/n.
\end{aligned}
\]
This completes the proof of the first part of the theorem.

To get from \(L^2\) to Szeg\H{o}--Widom asymptotics, 
note that by a version of the Bernstein-Walsh lemma\footnote{This statement is given
for the interval \([-1,1]\) and an exponential weight by 
Levin-Lubinsky \cite{LevinLubinsky}*{Section 5}, but the proof 
adapts verbatim to the present setting.},
\[
|P(z)|^2\le \frac{\max\{1,|\phi(z)|\}}{d(z,\gamma)}
|\phi(z)|^{2n}\lVert P\rVert_{L^2(\gamma,|dz|)}^2.
\]
whenever \(\deg(P)\le n\). Hence, when \(z\) is bounded away from \(\gamma\) we get that
\[
\frac{|T_n(z)-F_n(g,z)|}{|\phi(z)|^{n}} \le \frac{C \log n}{n}.
\]
This means that
\[
\begin{aligned}
T_n(z)&=F_n(g,z) + O\Big(\tfrac{\log n}{n}|\phi(z)|^{n}\Big)\\
&=F_n(g,z)\Big(1+O\big(\tfrac{\log n}{n}\Big)\Big),
\end{aligned}
\]
which completes the proof.
\end{proof}

\appendix
\section{Marcinkiewicz--Zygmund inequalities}
% !TeX root = ../main.tex

\label{s:MZ}
In this section, we give an overview of a result of Chui--Zhong, quoted above as
Lemma~\ref{lem:MZ}. Their original result is formulated for one particular
arc, while we need it for a general analytic arc. 
We also need to allow the \(A_2\)-characteristic of the weight 
\(\omega_n(z)=|\mathcal E_{n+1}(z)|^2\) 
to degenerate with \(n\), and thus  some extra care must be taken. 
We therefore reproduce the key steps of the proof here,
with the goal to explain how the proof works and 
to point out exactly which steps are affected 
by the lack of uniformity and which are not.

We allow ourselves to use the notation \(\lesssim\), \(\gtrsim\), \(\asymp\),
etc., when it is clear that the implied constant does not depend
on \(n\).
 
\subsection{Control on Green curves}
We fix the exterior conformal map \(\phi\) and denote by \(\gamma_n\) the
Green curves
\[
\gamma_n=\big\{z\in\C: |\phi(z)|=1+1/n\big\}.
\]
The key assumption about the distribution of our chosen points \(z_j\) is that
\[
|z_j-z_{j-1}|\asymp d(z_j,\gamma_n),\qquad j=1,\ldots,n,
\]
The following useful comparison result is taken from \cite{Dynkin}*{Lemma 10}.
\begin{lem}
\label{lem:compare-norms-Green}
There exist positive constants \(c\) and \(C\) such that
\[
c \int_{\gamma_n}|P(z)|^2\, |dz|\le 
\int_{\gamma}|P(z)|^2\, |dz|\le 
C\int_{\gamma_n}|P(z)|^2\, |dz|
\]
for any polynomial \(P\) of degree \(n\).
\end{lem}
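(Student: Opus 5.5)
The plan is to pass through the doubled picture \(D\) and reduce to the classical comparison between a Jordan curve and its level curves. With \(z=\varphi(\sigma)\) we have \(|dz|=|\varphi'(\sigma)|\,|d\sigma|\) and \(\varphi'(\sigma)=\tfrac12(1-\sigma^{-2})\). The arc \(\gamma\) is covered twice by \(\partial D\), so
\[
\int_\gamma |P|^2\,|dz|=\tfrac12\int_{\partial D}|P\circ\varphi(\sigma)|^2|\varphi'(\sigma)|\,|d\sigma|.
\]
The Green curve is \(\gamma_n=\varphi(\{|\Phi|=1+1/n\})\), since \(\phi\circ\varphi=\Phi\). On the curve \(\{|\Phi|=1+1/n\}\) the map \(\varphi\) is injective, so \(\int_{\gamma_n}|P|^2|dz|\) equals the integral of \(|P\circ\varphi|^2|\varphi'|\) over that curve. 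We may instead take the symmetrized integral, averaging over the two curves \(\{|\Phi(\sigma)|=1+1/n\}\) and its image under \(\sigma\mapsto\sigma^{-1}\), which lies just inside \(\partial D\). Both map onto \(\gamma_n\).

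The function \(Q(\sigma):=P(\varphi(\sigma))\,\sigma^{n}\) is a polynomial of degree \(2n\) in \(\sigma\). The weight \(|\varphi'|\) equals \(|\sigma^2-1|/(2|\sigma|^2)\), which vanishes to first order at \(\sigma=\pm1\). So we need the comparison between \(\partial D\) and the level curve \(\{|\Phi|=1\pm1/n\}\) for a function of the form \(h(\sigma)=P(\varphi(\sigma))(\sigma^2-1)^{1/2}\), or we square-root-free it. I would rather use \(\int |P\circ\varphi|^2|\sigma^2-1|\le \int |P\circ\varphi|^2\,|\sigma-1|\,|\sigma+1|\) directly, together with the following standard fact: for analytic \(\partial D\) and any \(H\) analytic in the annulus \(A_n=\{1-2/n\le|\Phi|\le1+2/n\}\) satisfying the growth bound \(|H|\lesssim |\Phi|^{2n}\cdot\text{const}\), the \(L^2\) norms of \(H\) on all level curves in \(A_n\) are comparable. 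This fact holds because \(\log\int_{|\Phi|=r}|H|^2\) is (up to a bounded factor from the conformal change of variables, since \(\Phi'\) is bounded above and below near \(\partial D\)) a convex function of \(\log r\). Indeed \(r\mapsto \int_{|w|=r}|H\circ\Phi^{-1}(w)|^2|d\Phi^{-1}|\) is comparable to a Hardy-type mean of the analytic function \((H\circ\Phi^{-1})\cdot((\Phi^{-1})')^{1/2}\). Hardy's convexity theorem then applies.

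Concretely, I would take \(H(\sigma)=P(\varphi(\sigma))(\sigma^2-1)\). This is analytic in a neighbourhood of \(\overline D\cup\{1/\sigma\}\), it is symmetric under \(\sigma\mapsto\sigma^{-1}\) up to a factor, and it satisfies \(|H(\sigma)|\le \|P\|\,|\Phi(\sigma)|^{n}\) type bounds from Bernstein–Walsh. Hardy convexity of the mean on circles \(|w|=r\), \(r\in[1-2/n,1+2/n]\), combined with the fact that multiplying by \(|w|^{\pm 2n}\) changes the mean by at most a factor \(e^{4}\) across this annulus, gives that the means at \(r=1\) and \(r=1+1/n\) (and at \(r=1-1/n\), via the inversion symmetry \(\sigma\mapsto1/\sigma\)) are comparable. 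The reason is that the log-convex function \(\log M(r)-2n\log r\) is controlled at the interior point by its values at the two ends. More precisely, I would use the symmetry: with \(m(r)=\) the mean of \(H\) on the level \(|\Phi|=r\) and its inverted copy, the function \(m\) is comparable on \(r\) and on the inverted level. Log-convexity together with this near-symmetry about \(r=1\) yields \(m(1)\lesssim m(1+1/n)\) and \(m(1+1/n)\lesssim m(1)\).

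Finally, the weights \(|\sigma^2-1|\) and \(|\varphi'|\) differ by bounded factors. This passes from \(H\) back to \(|P|^2|\varphi'|\) on both curves and hence to \(\int_\gamma\) and \(\int_{\gamma_n}\).

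The main obstacle is the lower comparison \(\int_{\gamma_n}\lesssim\int_\gamma\). An outer mean exceeding an inner one is natural, but a bound the other way needs the symmetric inner curve \(\{1/\sigma\}\). This inner curve is inside \(\partial D\), so \(\Phi\) is only available there through analytic continuation; analyticity of \(\gamma\) is exactly what provides it. At that step I would try first to rely on Dynkin's argument as cited, namely the three-circles/Hardy convexity applied to \(H\) divided by \(\Phi^{n}\) on the annulus \(A_n\).
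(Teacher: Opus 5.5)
The paper does not prove this lemma; it quotes it from Dynkin's Lemma~10. Your self-contained route (pull back by \(\varphi\), then use Hardy convexity and the symmetry \(\sigma\mapsto\sigma^{-1}\)) is therefore genuinely different from the paper. It can be made to work, but as written it has real gaps, and you have put the difficulty on the wrong side.

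Your ``standard fact'' is false. On the thin annulus \(A_n\), log-convexity of the means in \(\log r\) only bounds an interior mean \emph{from above} by the endpoint means. A growth bound \(|H|\lesssim|\Phi|^{2n}\) adds nothing to this, and dividing by \(\Phi^n\) merely subtracts a linear function of \(\log r\). A counterexample: in the model case \(\Phi(\sigma)=\sigma\), the function \(H(w)=\bigl(w/(1+2/n)\bigr)^{n^2}\) satisfies \(|H|\le 1\lesssim|w|^{2n}\) on \(A_n\). Yet its means at \(r=1+1/n\) and \(r=1\) differ by the factor \((1+1/n)^{n^2}\), which is of order \(e^{n}\).

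So nothing in the proposal gives \(\int_{\gamma_n}\lesssim\int_\gamma\). The sup-norm Bernstein--Walsh bound cannot help either, since \(\|P\|_\gamma^2\) is not controlled by \(\int_\gamma|P|^2\,|dz|\) uniformly in \(n\). The missing input is global and comes from the exterior: \(P\circ\varphi/\Phi^n\) is holomorphic in \(D\) and bounded at \(\infty\). With \(\Psi=\Phi^{-1}\), the function
\[
F(w)=\frac{P(\varphi(\Psi(w)))^2\,\bigl(\Psi(w)^2-1\bigr)\,\Psi'(w)}{w^{2n}\,\Psi(w)^2}
\]
is holomorphic on \(\{|w|>1\}\cup\{\infty\}\). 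Hence its \(L^1\)-means over \(|w|=r\) are non-increasing in \(r>1\). Let \(M(r)\) be the \(L^1\)-mean over \(|w|=r\) of \(|P(\varphi(\Psi))^2(\Psi^2-1)\Psi'|\), defined for \(r\) on both sides of \(1\) by continuing \(\Psi\). The monotonicity gives \(M(r)\lesssim (r/s)^{2n}M(s)\) for \(1\le s<r\le 2\). Since \(M(1)\asymp\int_\gamma|P|^2|dz|\) and \(M(1+1/n)\asymp\int_{\gamma_n}|P|^2|dz|\), the left inequality of the lemma follows immediately. Note that this is the easy direction, not the main obstacle.

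The direction that actually needs the inner side is \(\int_\gamma\lesssim\int_{\gamma_n}\), via convexity of \(\mu(t)=\log M(e^t)\) across \(t=0\). Here there is a second gap: the curve \(\{1/\sigma:|\Phi(\sigma)|=1+1/n\}\) is \emph{not} a level curve of the continued \(|\Phi|\). That continuation is by Schwarz reflection in \(\partial D\), not by \(\sigma\mapsto\sigma^{-1}\). To first order in \(1/n\), the value of \(-\log|\Phi|\) on the inverted curve is \(\frac1n\) times the ratio of the one-sided densities \(|\phi'_-|\) and \(|\phi'_+|\) at the corresponding point of \(\gamma\). This ratio is constant only for a segment. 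So your comparability of \(m\) ``on \(r\) and on the inverted level'' never enters the three-circles inequality.

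An averaging step repairs this:
\begin{enumerate}
\item Under \(\sigma\mapsto\sigma^{-1}\) the integrand \(|P\circ\varphi|^2|\sigma^2-1|\) only picks up a factor \(|\sigma|^{-2}\), and the Jacobian is bounded. So its area integral over \(\Psi(\{1+1/n<|w|<1+K/n\})\) is comparable to its area integral over the inverted region.
\item By the monotonicity above, the first of these area integrals is \(\lesssim \frac{K}{n}e^{2K}M(1+1/n)\).
\item For \(K\) large, depending on \(\gamma\), the inverted region contains \(\Psi(\{1-a_2/n<|w|<1-a_1/n\})\) with \(a_2-a_1\gtrsim 1\). Hence some \(\rho_-\) in that range satisfies \(M(\rho_-)\lesssim M(1+1/n)\).
\item Convexity of \(\mu\) on \([\log\rho_-,\log(1+1/n)]\) then gives \(M(1)\lesssim M(1+1/n)\).
\end{enumerate}

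Finally, your \(H=P(\varphi)(\sigma^2-1)\) with \(L^2\)-means carries the weight \(|\sigma^2-1|^2\), not \(|\sigma^2-1|\asymp|\varphi'|\). The discrepancy is unbounded near \(\pm1\), so it does not pass back to \(\int|P|^2|\varphi'|\). Use \(L^1\)-means of \((P\circ\varphi)^2(\sigma^2-1)\) instead, as above; Hardy convexity holds for every \(p>0\).
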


\subsection{A sampling theorem for Hardy spaces}
We denote by \(D_n\) the domain enclosed by \(\gamma_n\).
One key input is the following interpolation lemma for the Hardy space
\(H^2(D_n)\). 

\begin{lem}[\cite{ChuiZhong}*{Lemma~2.3}]
\label{lem:interpolation}
There exists a constant \(C\) such that for any sequence \(\{a_{j}\}_{j=0}^n\), 
there exists \(f\in H^2(D_n)\) with \(f(z_j)=a_j\) and
\[
\int_{\gamma_n}|f(z)|^2\, |dz|\le C\sum_{j=0}^n|a_j|^2 d(z_j,\gamma_n).
\]
\end{lem}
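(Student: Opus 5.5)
We need to prove the interpolation lemma: given values $a_j$ at points $z_j\in\gamma$ (interior to $D_n$, at distance $d_j:=d(z_j,\gamma_n)$ from the boundary), find $f\in H^2(D_n)$ with $f(z_j)=a_j$ and $\int_{\gamma_n}|f|^2|dz|\lesssim\sum|a_j|^2 d_j$. The plan is to transfer the problem to the unit disk and use the Carleson interpolation theorem, with the separation hypothesis \eqref{eq:unif-sep} supplying uniform separation in the hyperbolic metric.

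Let $\psi_n$ be a conformal map of $\mathbb D$ onto $D_n$. Since the curves $\gamma_n$ are level curves of $|\phi|$ for an analytic arc, they are uniformly nice: their images under the inverse Joukowski map are analytic curves close to $\partial D$. I would therefore construct $\psi_n$ as a composition with uniformly controlled distortion and boundary derivative comparable, up to constants independent of $n$, to a fixed model. Then $H^2(D_n)$ with arclength norm is isometric to $H^2(\mathbb D)$ via $f\mapsto (f\circ\psi_n)\sqrt{\psi_n'}$. Koebe distortion gives $d_j\asymp |\psi_n'(u_j)|(1-|u_j|)$ with $u_j=\psi_n^{-1}(z_j)$. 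The problem becomes: interpolate $b_j=a_j\sqrt{\psi_n'(u_j)}$ at $u_j$ in $H^2(\mathbb D)$ with norm$^2\lesssim\sum|b_j|^2(1-|u_j|)$. This is exactly the Shapiro--Shields $H^2$ interpolation theorem, valid with a constant depending only on the Carleson constant $\delta=\inf_j\prod_{k\ne j}\rho(u_j,u_k)$.

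The main work is to show that $\delta$ is bounded below independently of $n$. By \eqref{eq:unif-sep} together with the Koebe comparison, consecutive points satisfy $\rho(u_j,u_{j-1})\ge c>0$. Moreover, all $u_j$ lie on the curve $\psi_n^{-1}(\gamma)$, which stays at hyperbolic distance $O(1)$ from $\partial\mathbb D$ in the sense that $1-|u|\asymp$ the hyperbolic scale of the points. I would argue that this image curve is uniformly a Lipschitz graph in dyadic Whitney boxes: each Whitney box of $\mathbb D$ contains $O(1)$ points, and the points lie within a bounded number of Whitney layers above each boundary arc. This is the step I expect to be hardest, because one must verify uniform geometry near the endpoints $\pm1$, where $d_j\asymp n^{-2}$ and the arc bends back on itself. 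I would handle it through the Joukowski lift: work on $\varphi^{-1}(D_n)$, where the two sides of $\gamma$ become a single analytic curve near $\partial D$. Given these facts, the measure $\sum(1-|u_j|)\delta_{u_j}$ is Carleson with uniformly bounded constant, since any Carleson box of side $h$ contains only points from $O(1)$ layers with total mass $O(h)$. A uniformly separated sequence whose associated measure is Carleson is an interpolating sequence with controlled $\delta$.

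Finally, pull the interpolating function back: $f=(F\circ\psi_n^{-1})/\sqrt{(\psi_n^{-1})'}$, which gives the stated estimate with $C$ depending only on $c_0$ and $\gamma$.
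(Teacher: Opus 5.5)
Your skeleton is the paper's: transplant to $\D$ via $f\mapsto(f\circ\psi_n)(\psi_n')^{1/2}$, use Koebe to get $d(z_j,\gamma_n)\asymp|\psi_n'(u_j)|(1-|u_j|)$, apply Shapiro--Shields, and bound Carleson's constant below uniformly via weak separation plus a uniform Carleson-measure bound for $\sum_j(1-|u_j|^2)\delta_{u_j}$. Your derivation of pseudohyperbolic separation from \eqref{eq:unif-sep} is fine. Non-consecutive pairs follow easily, since $d(\cdot,\gamma_n)$ is $1$-Lipschitz and $\gamma$ is chord-arc.

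\emph{The gap.} The paper does not prove the uniform Carleson-measure bound; it imports it from Zhong (Lemma~1) and Chui--Zhong (Lemma~5.2). Your own justification of it rests on a false premise, namely that the $u_j$ lie in a bounded number of Whitney layers above each boundary arc. Since $\gamma\Subset D_n$, the set $\psi_n^{-1}(\gamma)$ is a compact arc crossing the interior of $\D$. Its hyperbolic length is $\asymp\int_\gamma|dz|/d(z,\gamma_n)\asymp n$, using $d(z,\gamma_n)\asymp n^{-1}(n^{-1}+|\phi_+(z)-\phi_-(z)|)$ from the proof of Lemma~\ref{lem:separation}.

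Concretely, take $\gamma=[-1,1]$. Then $D_n$ is an ellipse contained in the strip $\{|\operatorname{Im}z|<1/n\}$. With $\psi_n(0)=0$ and $\psi_n'(0)>0$, symmetry forces $\psi_n^{-1}(\gamma)=[-u^*,u^*]$. Comparison with the strip gives hyperbolic distance $\ge\pi n/2$ from $0$ to $u^*$, so $1-u^*\lesssim e^{-\pi n/2}$. Hence a Carleson box of fixed side at $\pm1$ contains $u_j$ from a number of dyadic layers growing linearly in $n$, and your argument ``only $O(1)$ layers, total mass $O(h)$'' is not available.

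\emph{What is actually needed.} The Carleson bound is still true, but by a different mechanism. If $\gamma$ is a quasi-geodesic for the hyperbolic metric of $D_n$, with constants independent of $n$, then each layer of a Carleson box contains $O(1)$ of the $u_j$ and the masses $1-|u_j|$ sum geometrically. Equivalently, you need $d_{D_n}(z_j,z_k)\gtrsim|j-k|$ uniformly in $n$. This is genuinely delicate near the tips, where $D_n$ pinches to width $\asymp n^{-2}$, and it is exactly the content the paper cites.

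\emph{Why the Joukowski lift does not rescue this.} The bounded-layer picture you describe is what one sees in the thin annulus between $\Gamma_n$ and its reflection under $\sigma\mapsto1/\sigma$, where the lifted points sit on the core curve $\partial D$. But $\varphi$ is two-to-one and branched at $\pm1$, and it does not intertwine with $\psi_n$, so it says nothing directly about the $u_j\in\D$. If you instead interpolate on the lift, the norm becomes $\int_{\Gamma_n}|f\circ\varphi|^2|\sigma^2-1|\,|d\sigma|/(2|\sigma|^2)$. That weight has $A_2$-characteristic $\asymp\log n$ on $\Gamma_n$ (as in Lemma~\ref{lem:A2-local-estimate}), so a uniform $C$ is not automatic.

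\emph{Two minor slips.}
\begin{itemize}
\item The pull-back should be $f=(F\circ\psi_n^{-1})\,\bigl((\psi_n^{-1})'\bigr)^{1/2}$: multiply, not divide.
\item No ``fixed model'' with comparable boundary derivative exists, since $D_n$ degenerates and $|\psi_n'|$ on $\partial\D$ varies by factors exponential in $n$. You do not need one: Koebe's estimate holds with absolute constants.
\end{itemize}
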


The proof appears in \cite{ChuiZhong}*{Lemma~2.3}, although
some details are left to the reader. If \(w_{j,n}\) denote the
images of the points \(z_j\) under a conformal map from \(D_n\)
onto the unit disk, the required input for interpolation is that these
are uniformly separated, in the sense that
\[
\inf_{k=0,\dotsc,n}\prod_{j\neq k}
\left|\frac{w_{j,n}-w_{k,n}}
{1-\overline{w_{k,n}}w_{j,n}}\right|\geq c>0,
\]
with \(c\) independent of \(n\). By Carleson's interpolation
theorem, formulated for the upper half-plane in
\cite{Gar07}*{Theorem~1.1}, this is equivalent to weak
separation in the pseudohyperbolic metric together with the Carleson measure
estimate
\[
\int_{\D}|h|^2\,d\nu_n
\leq C\|h\|_{H^2(\D)}^2,
\qquad
\nu_n=\sum_{j=0}^n(1-|w_{j,n}|^2)\delta_{w_{j,n}}.
\]
The required estimates, with all relevant constants uniform in \(n\), are
established in \cite{Zho94}*{Lemma~1} and
\cite{ChuiZhong-JAT}*{Lemma~5.2}. The stated interpolation 
estimate follows by applying the \(H^2\)-interpolation
theorem of Shapiro--Shields \cite{SS61}*{Lemma~3}, followed by 
composition with an appropriate conformal map.

It is worth noting that in the proof of \cite{ChuiZhong-JAT}*{Lemma 5.2}, 
the authors already use uniform bounds for the (unweighted) Cauchy 
transform which is the topic of the next subsection.

\subsection{Uniform control of Cauchy transforms}
For a closed contour \(\Gamma\), we denote by \(\mathcal{C}=\mathcal{C}_\Gamma\) 
the Cauchy transform on \(\Gamma\), defined as
\[
\mathcal{C}f(z)={\rm p.v.}\frac{1}{2\pi i}\int_{\Gamma}\frac{f(w)}{w-z}dw.
\]
For the purposes of this discussion, we consider only smooth contours
and smooth functions \(f\), so the meaning of this is clear.
The principal value integral \(\mathcal{C}f\) is related to the boundary 
values of the standard Cauchy integral via the Sokhotskii--Plemelj formula
\begin{equation}
\label{eq:Plemelj}
\mathcal{C}f(\zeta)
=\lim_{z\to \zeta}\,\frac{1}{2\pi i}\,\int_{\Gamma}\frac{f(w)}{w-z}dw
-\frac{1}{2}f(\zeta),\qquad \zeta\in \Gamma,
\end{equation}
where the limit is taken non-tangentially from within the
region enclosed by \(\Gamma\).

We say that \(\mathcal{C}\) defines a bounded operator on 
\(L^2(\Gamma, \omega\,|dz|)\) if
\[
\Big(\int_{\Gamma}\big|\mathcal{C}f(z)|^2 \omega(z)\,|dz|\Big)^{1/2}\le 
c \Big(\int_{\Gamma}|f(z)|^2 \omega(z)\,|dz|\Big)^{1/2}
\]
for all \(f\in C^\infty(\Gamma)\).
The norm \(\lVert \mathcal{C}\rVert_{\Gamma,\omega}\) 
is the infimum over admissible constants in this upper bound. 

The study of boundedness properties of the Cauchy transform and related singular integrals
is a hugely important branch of harmonic analysis and geometric measure theory.
By now, a rather complete theory is available. We will need the following result.

\begin{lem}
\label{lem:Cauchy-bd}
The norm of \(\mathcal{C}\) as an operator on \(L^2(\gamma_n,\omega_n\,|dz|)\),
satisfies
\[
\big\lVert \mathcal{C}\big\rVert_{\gamma_n,\omega_n} 
\lesssim \big[\omega_n\big]_{A_2,\gamma_n},
\]
as \(n\to\infty\).
\end{lem}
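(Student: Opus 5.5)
The plan is to reduce Lemma~\ref{lem:Cauchy-bd} to the sharp weighted bound for the Hilbert transform on the unit circle by means of a conformal change of variables. The map \(\phi\) carries \(\gamma_n\) onto the circle \(\{|w|=1+1/n\}\). Composing with a dilation, we write \(\psi_n(w)=\phi^{-1}((1+1/n)w)\), so that \(\psi_n\) parametrizes \(\gamma_n\) by the unit circle \(\mathbb T\). Because \(\gamma\) is analytic, \(\phi^{-1}\circ\varphi^{-1}\)-type factorizations, combined with the Joukowski structure of Section~\ref{s:faber}, show that the curves \(\gamma_n\) are uniformly chord–arc (Ahlfors regular with constant independent of \(n\)). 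Indeed, \(\gamma_n=\varphi(\Phi^{-1}(\{|w|=1+1/n\})\cup\text{reflected part})\) is the image of analytic curves close to \(\partial D\) under \(\varphi\), and near the endpoints \(\pm1\) the curve \(\gamma_n\) is an ellipse-like loop of width \(\asymp n^{-2}\) and curvature \(\asymp n^{2}\). I would verify uniform chord–arc regularity directly from this local model.

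Next I would invoke the sharp \(A_2\) theorem (Hytönen's \(A_2\) theorem, or Petermichl's result for the Hilbert transform) in its version for Calderón–Zygmund operators on spaces of homogeneous type. The Cauchy transform on a chord–arc curve is a Calderón–Zygmund operator whose kernel constants and unweighted \(L^2\) bound depend only on the chord–arc constant. This follows from the Coifman–McIntosh–Meyer/David theorem. The space \((\gamma_n,|dz|)\) with arc-length metric is a space of homogeneous type whose doubling constant is uniform in \(n\). The sharp theorem then gives \(\lVert\mathcal C\rVert_{\gamma_n,\omega}\le C[\omega]_{A_2,\gamma_n}\), with \(C\) depending only on the chord–arc and Calderón–Zygmund constants. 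Applied to \(\omega=\omega_n\), this yields the lemma.

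An alternative route avoids homogeneous-type machinery and transplants the problem to \(\mathbb T\). Write \(\mathcal C_{\gamma_n}f\circ\psi_n\) as the Hilbert transform of \((f\circ\psi_n)\) plus an error operator with kernel
\[
\frac{\psi_n'(u)}{\psi_n(u)-\psi_n(w)}-\frac{1}{u-w}.
\]
Because \(\psi_n\) is not uniformly smooth near the endpoints, this error kernel is not uniformly bounded. For that reason I prefer the first route.

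The main obstacle is the uniformity in \(n\), namely checking that the chord–arc constant of \(\gamma_n\) and hence the Calderón–Zygmund constants stay bounded. Away from \(\pm1\) this is clear, since \(\gamma_n\) consists of two analytic curves at distance \(\asymp 1/n\) on either side of \(\gamma\). Near an endpoint I would rescale by \(n^2\). Under the blow-up \(z\mapsto n^2(z\mp1)\), and via \(\varphi\), the curve converges to a fixed parabola-type analytic curve, namely the image of a circle under \(s\mapsto s^2\). This would give uniform control of the arc length between points in terms of their distance. In the intermediate range one interpolates between the two models. A secondary point is to ensure the weighted constant in the sharp \(A_2\) theorem is linear in \([\omega]_{A_2}\) and is computed with arcs of \(\gamma_n\), which matches definition~\eqref{eq:def-A2-constant}.
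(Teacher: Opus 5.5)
\textbf{Verdict: the overall route matches the paper's, but the uniformity step rests on a false geometric claim.}

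Your architecture is the same as the paper's: a bound linear in $[\omega]_{A_2}$ for Calder\'on--Zygmund operators on spaces of homogeneous type (the paper uses Lorist's $A_2$ theorem), combined with a uniform unweighted bound from Coifman--McIntosh--Meyer/David. The gap is in the step you yourself flag as the main obstacle. The curves $\gamma_n$ are \emph{not} uniformly chord--arc, so the verification you plan cannot succeed.

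Your own picture shows this. Away from $\pm1$, $\gamma_n$ consists of two strands at distance $\asymp 1/n$ on either side of $\gamma$. Two points of $\gamma_n$ facing each other across the middle of $\gamma$ are therefore at distance $\asymp 1/n$, while each subarc of $\gamma_n$ joining them runs around an endpoint and has length $\asymp 1$. For $\gamma=[-1,1]$, $\gamma_n$ is the ellipse with semi-axes $\frac12(R+R^{-1})\approx 1$ and $\frac12(R-R^{-1})\approx 1/n$, where $R=1+1/n$; its chord--arc constant is $\asymp n$.

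The endpoint blow-up does not rescue this. After the rescaling $\sigma\mapsto n(\sigma-1)$, the curve $\Gamma_n$ tends to a \emph{line} at positive distance from $0$, not a circle. Since $n^2(z-1)\approx\frac12\bigl(n(\sigma-1)\bigr)^2$, the limiting curve is a parabola. A parabola is itself not chord--arc: the points $(t^2,\pm t)$ on $X=Y^2$ are at distance $2t$ but are joined by an arc of length $\approx 2t^2$. Consequently, in the arc-length metric you propose, the Cauchy kernel violates the size condition $|K(z,w)|\lesssim 1/d(z,w)$ by a factor $\asymp n$. The claimed uniform Calder\'on--Zygmund constants are then lost.

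\textbf{How to repair it (this is what the paper does).} The property that does hold uniformly is the one in your parenthetical: Ahlfors--David (Carleson) regularity, $|\gamma_n\cap B(x,\varepsilon)|\le C\varepsilon$ with $C$ independent of $n$. Equip $\gamma_n$ with the \emph{Euclidean} metric and arc-length measure. Then:
\begin{itemize}
\item[(a)] this is a space of homogeneous type with a doubling constant uniform in $n$;
\item[(b)] the Cauchy kernel satisfies the size and smoothness estimates with absolute constants;
\item[(c)] David's theorem bounds the unweighted norm $\lVert\mathcal C\rVert_{\gamma_n}$ in terms of the Carleson constant alone.
\end{itemize}

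One check remains after this switch. The $A_2$ characteristic in the sharp theorem is now taken over Euclidean balls, and $B(z,r)\cap\gamma_n$ can consist of two separate strands. So it is not literally the arc-based quantity \eqref{eq:def-A2-constant}. For the concrete weight $\omega_n\asymp|z^2-1|$ of Lemma~\ref{lem:omega-rough}, the two strands carry comparable weight, so the two characteristics are comparable. You need to verify this directly rather than read it off from a chord--arc property.
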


This is a deep fact, based on a remarkable connection between 
boundedness of the Cauchy transform, Carleson curves and Muckenhoupt 
weights. A composed locally-rectifiable curve $\Gamma$ is called 
a Carleson curve (or Ahlfors--David regular) if the arc-length of 
$\Gamma\cap B(x,\varepsilon)$ is comparable to $\varepsilon$ 
for every $x\in\Gamma$ and $0<\varepsilon<\rm{diam}(\Gamma)$. 
Since the lower bound is automatic for curves, this is equivalent to 
the existence of $C_\Gamma>0$ such that 
$|\Gamma\cap B(x,\varepsilon)|\leq C_\Gamma\varepsilon$ for 
$x\in \Gamma$ and $\varepsilon>0$. The constant $C_\Gamma$ is sometimes 
called the Carleson (or Ahlfors) constant of the curve $\Gamma$. 
Building on contributions from many different authors, 
boundedness of the Cauchy transform was characterized 
nearly 40 years ago, summarized in the following theorem.
\begin{thm}[\cite{DynkinEncycl}*{Theorem~5.15}, \cite{BK}*{Theorem~4.15}]
\label{thm:cauchy-bound}
Let $\Gamma$ be a composed locally-rectifiable curve. 
Then, the Cauchy transform $\mathcal{C}_\Gamma$ 
generates a bounded operator on $L^2(\Gamma,\omega|dz|)$ 
if and only if $\Gamma$ is a Carleson curve and 
$\big[\omega\big]_{A_2,\Gamma}<\infty$.
\end{thm}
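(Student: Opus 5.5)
This theorem is a classical result that the paper quotes from \cite{DynkinEncycl} and \cite{BK}. I would not reprove it from scratch. Instead I would assemble it from three standard ingredients: David's characterization of unweighted \(L^2\)-boundedness, Calderón--Zygmund theory on spaces of homogeneous type, and the Hunt--Muckenhoupt--Wheeden type testing arguments. The plan has a sufficiency half and a necessity half.

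\emph{Sufficiency.} Assume first that \(\Gamma\) is a Carleson curve. Then \((\Gamma,|z-w|,|dz|)\) is a space of homogeneous type, since \(|\Gamma\cap B(x,\varepsilon)|\asymp\varepsilon\). The Cauchy kernel \(K(z,w)=\frac{1}{2\pi i}\frac{1}{w-z}\) satisfies the standard size bound \(|K|\lesssim |z-w|^{-1}\) and the Hölder smoothness estimates, so it is a Calderón--Zygmund kernel in this setting. The unweighted step is David's theorem: \(\mathcal C_\Gamma\) is bounded on \(L^2(\Gamma,|dz|)\) exactly when \(\Gamma\) is Carleson. I would take this from the \(T(b)\)-theorem, using an accretive function \(b\) given by the unit tangent \(dz/|dz|\), or from David's original good-\(\lambda\) argument built on the Lipschitz-graph case of Coifman--McIntosh--Meyer.

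With unweighted boundedness in hand, Calderón--Zygmund theory on spaces of homogeneous type gives several further facts. The maximal truncated operator is controlled through the sharp-function estimate \(M^\#(\mathcal C_* f)\lesssim M_s f\) for \(s>1\). Combined with the Fefferman--Stein inequality \(\|g\|_{L^2(\omega)}\lesssim\|M^\# g\|_{L^2(\omega)}\), which holds for \(\omega\in A_\infty\supset A_2\), and with Muckenhoupt's theorem that \(M_s\) is bounded on \(L^2(\omega)\) for \(\omega\in A_{2/s}\), this yields boundedness on \(L^2(\Gamma,\omega|dz|)\). The last step needs the openness property \(A_2\Rightarrow A_{2-\epsilon}\), which follows from reverse Hölder and remains valid on homogeneous spaces. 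I note that this route gives a constant depending on \([\omega]_{A_2}\) through the reverse-Hölder exponent, so it need not be linear. The linear dependence used in Lemma~\ref{lem:Cauchy-bd} requires the sharper \(A_2\) theorem, for example via sparse domination, which is also available on homogeneous spaces.

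\emph{Necessity.} If \(\mathcal C_\Gamma\) is bounded on \(L^2(\omega)\) for some weight \(\omega\), I would first show that \(\omega\in A_2\) relative to arclength. I would take a subarc \(I\) and a neighbouring arc \(J\) of comparable length and distance. On \(I\times J\), after a rotation, the kernel \(1/(w-z)\) has real part bounded below by a constant times \(|I|^{-1}\); this is where a local geometric property of \(\Gamma\) enters. Testing with \(f=\omega^{-1}\chi_I\) then gives \(\omega(J)\,\omega^{-1}(I)^2|I|^{-2}\lesssim\omega^{-1}(I)\), and the symmetric argument gives \(A_2\). For the Carleson property I would test with \(f=\chi_{\Gamma\cap B(x,\varepsilon)}\), following David, and show that a large arclength in a small ball forces \(\|\mathcal C\chi\|\) to be large.

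The main obstacle is the unweighted David theorem, which is genuinely deep. A secondary difficulty is the necessity direction for a general composed curve: the "kernel of one sign" step needs care when \(\Gamma\) spirals, and there one uses the Carleson property and a doubling argument. For the paper's application, only the sufficiency half with linear \(A_2\)-dependence on smooth Green curves is actually needed.
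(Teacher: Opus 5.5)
The paper does not prove this theorem. It is quoted from Dynkin and B\"ottcher--Karlovich, and the surrounding text discusses only the ``if'' direction, in quantitative form: David's theorem for the unweighted transform (Carleson curves approximated by Lipschitz graphs, with Coifman--McIntosh--Meyer as input), followed by Lorist's \(A_2\) theorem.

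Your sufficiency half has the same architecture. Passing to weights via the sharp maximal function, Fefferman--Stein and openness of \(A_2\) is the classical qualitative route, which suffices for the statement as given. You are also right that Lemma~\ref{lem:Cauchy-bd} needs the sharp \(A_2\) theorem instead.

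One correction: the \(T(b)\) option with \(b=dz/|dz|\) does not give David's theorem. The unit tangent is never accretive on a closed curve, since its integral against \(|dz|\) vanishes. It is para-accretive on chord-arc curves, because the mean of \(dz\) over an arc is chord over length. It is not para-accretive on general Carleson curves, which may double back along thin hairpins at all scales, so that tangent averages nearly cancel on every ball centred on a hairpin and larger than its width. As stated, that option only covers chord-arc curves. For Carleson curves you need David's good-\(\lambda\) argument from Lipschitz graphs, as the paper indicates.

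The necessity half has a genuine gap.

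\emph{Test function.} A small point first: \(\mathcal{C}_\Gamma f(z)=\frac{1}{2\pi i}\int_\Gamma f(w)\,\tau(w)\,|dw|/(w-z)\), where \(\tau\) is the unit tangent. Testing with \(f=\omega^{-1}\chi_I\) therefore produces the kernel \(\tau(w)/(w-z)\), which has no sign on a curved arc. You must test with \(\omega^{-1}\overline{\tau}\chi_I\).

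\emph{Circularity.} This is the real problem. Your \(A_2\) step uses the Carleson property (and doubling). Your Carleson step tests \(\chi_{\Gamma\cap B(x,\varepsilon)}\) in \(L^2(\omega)\), which controls only \(\omega\)-masses and says nothing about arclength unless \(\omega\) is already under control. David's argument is for \(\omega\equiv 1\).

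\emph{A way to close it.} You can obtain both conditions at once from balls normalized by their radius.
\begin{itemize}
\item Fix \(t\in\Gamma\) and small \(\varepsilon\), and put \(B=\Gamma\cap B(t,\varepsilon)\). The component of \(\Gamma\) through \(t\) crosses the annulus \(K\varepsilon<|z-t|<2K\varepsilon\). So one of \(N\) fixed angular sectors contains a piece \(E\) of \(\Gamma\) in the annulus with \(|E|\ge K\varepsilon/N\).
\item For \(K,N\) large, with \(w\) in one of the sets and \(z\) in the other, the values \(1/(w-z)\) lie in a fixed sector of aperture less than \(\pi/2\) (depending on which set carries \(w\)). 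They also have modulus \(\asymp (K\varepsilon)^{-1}\).
\item Apply \(\mathcal{C}_\Gamma\) to \(\omega^{-1}\overline{\tau}\chi_B\) (truncated if necessary) and integrate over \(E\); then do the same with the roles of \(B\) and \(E\) swapped. This gives \(\omega(E)\,\omega^{-1}(B)\lesssim\varepsilon^2\) and \(\omega(B)\,\omega^{-1}(E)\lesssim\varepsilon^2\).
\item Multiply the two bounds and use \(\omega(E)\,\omega^{-1}(E)\ge |E|^2\gtrsim\varepsilon^2\). This yields \(\omega(B)\,\omega^{-1}(B)\lesssim\varepsilon^2\).
\item By Cauchy--Schwarz, \(|B|^2\le\omega(B)\,\omega^{-1}(B)\), so this contains the Carleson condition. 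Since any arc \(I\) lies in \(\Gamma\cap B(t,2|I|)\) for \(t\in I\), it also bounds \([\omega]_{A_2,\Gamma}\).
\end{itemize}
Only connectedness of \(\Gamma\) is used, so spirals cause no trouble. Large \(\varepsilon\) on bounded components is handled by the integrability of \(\omega^{\pm1}\), which the small-ball estimate already provides.
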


We refer to \cite{BK}*{Section 4.4} for a historical account, and to \cite{Verdera}
for an excellent survey outlining the development of key ideas around Cauchy integrals.

We cannot immediately conclude Lemma~\ref{lem:Cauchy-bd} 
from Theorem~\ref{thm:cauchy-bound} since we want explicit 
control of the norm of the Cauchy transform on changing 
curves $\gamma_n$ with diverging Muckenhoupt characteristic 
$\big[\omega_n\big]_{A_2,\gamma_n}\asymp \log n$. However,
there are more precise results, which make the dependence of the norm of
the Cauchy integral on the weight explicit. First, by a version of
the \(A_2\)-theorem due to Lorist \cite{Lorist}*{Theorem~6.1}, we have that
\[
\big\lVert \mathcal{C}\big\rVert_{\gamma_n,\omega_n}\le C_0\big[\omega_n\big]_{A_2,\gamma_n}\,
\big\lVert \mathcal{C}\big\rVert_{\gamma_n}
\]
where \(C_0\) is a constant which can be taken independent of 
\(n\)\footnote{More specifically, \(C_0\) depends only on the 
Carleson constants of \(\gamma_n\), the relevant homogeneous space constants, 
as well as on a Dini constant for the Cauchy kernel, which all remain uniformly 
bounded in our case} and 
$\big\lVert \mathcal{C}\big\rVert_{\gamma_n}
=\big\lVert \mathcal{C}\big\rVert_{\gamma_n,1}$ 
is the norm of the unweighted Cauchy transform on $\gamma_n$
(see also \cite{Anderson14}*{Theorem~1.1} and 
\cite{Lerner}*{Theorem~1.1} for related results).

To prove the lemma, it thus remains to establish
uniform bounds for the unweighted Cauchy transform on the curves \(\gamma_n\).
This follows from work by David \cites{David82,David}, 
who established that the Cauchy transform is bounded on Carleson curves. 
An essential ingredient in the proof is that the norm of the Cauchy 
transform on a Lipschitz graph can be estimated only in terms of the 
Lipschitz constant, proven by Coifman, McIntosh and Meyer \cite{CMM82}; 
see also \cites{CJS,Verdera}. By approximating the Carleson curve $\Gamma$ 
by Lipschitz graphs, David established the boundedness of $\mathcal{C}_\Gamma$. 
Carefully tracking the constants in the proof shows that the bound can be 
given in terms of the Carleson constant $C_\Gamma$. Since we know that 
the curves $\gamma_n$ are uniformly Carleson, i.e. 
$\sup_nC_{\gamma_n}<\infty$, this implies Lemma~\ref{lem:Cauchy-bd}. 
The content of \cites{David82,David} is presented in English in \cite{BK}*{Chapter 5}.

\subsection{Proof of the Marcinkiewicz--Zygmund inequality}
With the preparations in place, the proof is quite elegant.
We closely follow Chui--Zhong \cite{ChuiZhong}.

\begin{proof}[Proof of Lemma~\ref{lem:MZ}]
We apply Lemma~\ref{lem:interpolation} 
with \(a_{j}=P(z_j)\), which yields a function \(f\in H^2(D_n)\) with
\[
f(z_j)=P(z_j),
\]
satisfying the desired \(H^2\)-norm bound
\begin{equation}
\label{eq:interpolation-bd-f}
\int_{\gamma_n}|f(z)|^2 \,|dz|\lesssim \sum_{j=0}^n |P(z_j)|^2 \,d(z_j,\gamma_n).
\end{equation}
By the triangle inequality,
\begin{equation}
\label{eq:int-P-to-int-f}
\int_{\gamma_n}|P(z)|^2\,|dz|\le 2\int_{\gamma_n}|f(z)|^2\,|dz|
+2\int_{\gamma_n}|f(z)-P(z)|^2\,|dz|,
\end{equation}
and hence we should estimate the norm of \(f-P\). 

To this end, note that \((f-P)/\mathcal E\) is holomorphic on \(D_n\),
so by Cauchy's theorem,
\begin{equation}
\label{eq:f-P}
f(z)-P(z)=\frac{\mathcal{E}(z)}{2\pi i}
\int_{\gamma_n}\frac{f(w)-P(w)}{\mathcal{E}(w)(w-z)}dw\\
=\frac{\mathcal{E}(z)}{2\pi i}\int_{\gamma_n}\frac{f(w)}{\mathcal{E}(w)(w-z)}dw
\end{equation}
where the last step follows because \(\frac{P(w)}{\mathcal{E}(w)(w-z)}\) 
decays like
\(O(1/w^2)\) at infinity, which allows us to write
\[
\frac{1}{2\pi i}\int_{\gamma_n}\frac{P(w)}{\mathcal{E}(w)(w-z)}dw
=\lim_{R\to\infty} \frac{1}{2\pi i}\int_{|w|=R}\frac{P(w)}{\mathcal{E}(w)(w-z)}dw=0.
\]
We recall the notation \(\mathcal{C}h(z)\) for the Cauchy transform of a function \(h\)
along \(\gamma_n\), interpreted in the principal-value sense.
The above discussion shows that, on \(\gamma_n\), we have the identity
\[
f(z)-P(z)=\mathcal{E}(z)\big(\tfrac12 I+\mathcal{C}\big)(f/\mathcal{E})(z),
\]
where the contribution \(\frac12 I\) comes from Plemelj's formula \eqref{eq:Plemelj}. 
Hence the norm of \(f-P\) can be controlled by
\[
\int_{\gamma_n} |f(z)-P(z)|^2 \,|dz|\le 
\int_{\gamma_n} |\mathcal{E}(z)|^2 |\big(\tfrac12I + \mathcal{C}\big)\big(f/\mathcal{E}\big)(z)|^2 \,|dz|.
\]
By Lemma~\ref{lem:Cauchy-bd}, the Cauchy transform is
bounded on \(L^2(\gamma_n,\omega_n\,|dz|)\) with norm 
\[
\lVert \mathcal{C}\rVert_{\gamma_n, \omega_n}\lesssim [\omega_n]_{A_2,\gamma_n}.
\]
The same estimate clearly holds for \(\tfrac12I+\mathcal{C}\), and thus we have that
\[
\begin{aligned}
\int_{\gamma_n} |\mathcal{E}(z)|^2\, 
|\big(\tfrac12I+\mathcal{C}\big)\big(f/\mathcal{E}\big)(z)|^2 \,|dz|& \lesssim 
[\omega_n]^2_{A_2,\gamma_n}\int_{\gamma_n} |\mathcal{E}(z)|^2 
|f(z)/\mathcal{E}(z)|^2 \,|dz|\\
&=[\omega_n]^2_{A_2,\gamma_n}\int_{\gamma_n} |f(z)|^2 \,|dz|,
\end{aligned}
\]
where the implicit constant only depends on the fixed arc \(\gamma\).
Combining this estimate with \eqref{eq:int-P-to-int-f}, we get
\[
\int_{\gamma_n}|P(z)|^2\,|dz|\lesssim [\omega_n]_{A_2,\gamma_n}^2
\int_{\gamma_n} |f(z)|^2 \,|dz|
\]
which by the interpolation bound \eqref{eq:interpolation-bd-f} for \(f\) gives that
\[
\int_{\gamma}|P(z)|^2\,|dz|\lesssim [\omega_n]_{A_2,\gamma_n}^2 
\sum_{j=0}^n |P(z_j)|^2 d(z_j,\gamma_n).
\]
The claim now follows from the fact that
\(d(z_j,\gamma_n)\le C/n\).
\end{proof}

% !TeX root = ../main.tex
\section{Geometric lemmas}
\label{s:geom-lemmas}

\subsection{Growth of \texorpdfstring{\(A_2\)}{A2}-characteristics}
We next turn to the growth of the Muckenhoupt \(A_2\)-characteristics 
of the weights
$\omega_{\mathcal E}=|\mathcal E|^2$, where $\mathcal E=\mathcal{E}_{n+1}$ 
is the polynomial with leading coefficient $\phi'(\infty)^n$ and 
zeroes at the points $z_j=z_{j,n}$ from Lemma~\ref{lem:extreme-pts}. 

\begin{prop}
\label{prop:A2-constant-E}
Let $\mathcal E(z)=\phi'(\infty)^n\prod_{j=0}^n (z-z_{j})$ and 
$\omega_{\mathcal E} :=|\mathcal E|^2$. Then,
\begin{equation}
\label{eq:A2-constant-E-asymp}
[\omega_{\mathcal E}]_{A_2,\gamma_n}\asymp \log n
\end{equation}
\end{prop}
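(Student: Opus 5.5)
The plan is to reduce the weight \(\omega_{\mathcal E}=|\mathcal E|^2\) on \(\gamma_n\) to an explicit, non-oscillating model weight. Then compute the \(A_2\)-characteristic of that model by hand. Concretely, I aim to show
\[
|\mathcal E(z)|^2\asymp |z-1|\,|z+1|,\qquad z\in\gamma_n,
\]
with constants independent of \(n\). The point is that, although \(\mathcal E\) vanishes at the \(n+1\) points \(z_j\in\gamma\), the Green curve \(\gamma_n\) stays away from these zeros at exactly the scale on which \(\mathcal E\) stops oscillating.

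\textbf{Step 1: replace \(\mathcal E\) by \(U\).} By the construction in Section~\ref{ss:derivative}, the zeros \(w_j\) of \(\mathcal E_0\) satisfy \(|w_j-z_j|=O(r^n/n)\). Every point of \(\gamma_n\) lies at distance \(\gtrsim n^{-2}\) from \(\gamma\). Writing \(\mathcal E/\mathcal E_0=\prod_j (z-z_j)/(z-w_j)\), this gives \(\mathcal E=\mathcal E_0(1+O(e^{-cn}))\) uniformly on \(\gamma_n\). Next, \eqref{eq:E0-U-est} holds on a fixed neighborhood \(\mathcal N\supset\gamma_n\) and gives \(\mathcal E_0=U+O(|z^2-1|r^n)\). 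Hence it suffices to prove \(|U|\asymp|z^2-1|^{1/2}\) on \(\gamma_n\), once \(|U|\) is shown to dominate this error term.

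\textbf{Step 2: the key estimate \(|W|\asymp1\).} Since \(U(z)=\sqrt{z^2-1}\,W(\varphi^{-1}(z))\), the estimate above reduces to showing \(|W(\sigma)|\asymp 1\) on \(\Sigma_n:=\varphi^{-1}(\gamma_n)=\{|\Phi|=1+1/n\}\).

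First consider the interior. Because \(\Phi\) is conformal across \(\partial D\), we have \(|\Phi(\sigma^{-1})|=1-1/n+O(n^{-2})\) for \(\sigma\in\Sigma_n\). Also \(|1/\mathcal R|\) is bounded above and below, and the boundary moduli of the two terms of \(W\) coincide on \(\partial D\) by \eqref{eq:modulus-match}. Therefore the two terms in \eqref{eq:def_W} have moduli
\[
a\,e^{1+O(1/n)}\qquad\text{and}\qquad a\,e^{-1+O(1/n)}
\]
for a common \(a\asymp1\), up to an \(O(\operatorname{dist}(\sigma,\partial D))\) correction. The reverse triangle inequality then gives \(|W|\ge a(e-e^{-1})/2\) and \(|W|\le 2ea\).

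The delicate region is near \(\sigma=\pm1\), where \(\sigma\) and \(\sigma^{-1}\) nearly coincide. Here \(\Sigma_n\) passes at distance \(\asymp 1/n\) from \(\pm1\). I will write
\[
W(\sigma)=\frac{R(\infty)}{\phi'(\infty)}\,\frac{\Phi(\sigma)^{n+1}}{\mathcal R(\sigma)}\bigl(1-e^{\,(n+1)L(\sigma)+M(\sigma)}\bigr),
\]
where \(L=\log\Phi(\sigma^{-1})-\log\Phi(\sigma)\) is analytic and odd under \(\sigma\mapsto\sigma^{-1}\), with \(L\asymp \sigma-\sigma^{-1}\), and \(M\) is analytic and bounded. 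On \(\Sigma_n\) the real part \(\Re\,(n+1)L\approx -2\) stays bounded away from \(0\). Hence \(|1-e^{(n+1)L+M}|\asymp1\) there too, and \(|W|\asymp1\) uniformly.

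Making this endpoint analysis uniform is the step I expect to be the main obstacle. In particular, one must control \(M\) and the \(O(1/n)\) corrections so that \(\Re((n+1)L+M)\) really stays bounded away from zero along all of \(\Sigma_n\) near \(\pm1\). Once \(|W|\asymp1\) holds on all of \(\Sigma_n\), the error term from Step 1 is indeed negligible and \(|U|\asymp|z^2-1|^{1/2}\) follows.

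\textbf{Step 3: the \(A_2\)-characteristic of the model weight.} By Steps 1–2, \(\omega_{\mathcal E}\asymp |z-1||z+1|\) on \(\gamma_n\). Near \(z=1\), the curve \(\gamma_n\) wraps around the endpoint at distance \(\asymp n^{-2}\). Parametrising \(\gamma_n\) by arclength \(s\) from its closest point to \(1\), and using that \(\gamma_n\) is uniformly Carleson, we get \(|z-1|\asymp \max(|s|,n^{-2})\). The same holds at \(-1\), and \(\omega_{\mathcal E}\asymp1\) away from both endpoints.

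For an arc \(I\) of length \(L\) centred near the endpoint:
\begin{itemize}
\item the average of \(\omega_{\mathcal E}\) over \(I\) is \(\asymp \max(L,n^{-2})\);
\item the average of \(\omega_{\mathcal E}^{-1}\) over \(I\) is \(\lesssim L^{-1}\log(2+Ln^2)\).
\end{itemize}
Their product is therefore \(\lesssim \log n\). Arcs not containing the endpoint region give bounded products, by the standard doubling argument for the power weight \(|s|\). This proves the upper bound. For the matching lower bound, take \(I\) to be a fixed-length arc of \(\gamma_n\) around \(1\). Then the average of \(\omega_{\mathcal E}\) is \(\asymp1\), while the average of \(\omega_{\mathcal E}^{-1}\) is \(\asymp\int_{n^{-2}}^{1}ds/s\asymp\log n\). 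Together these give \([\omega_{\mathcal E}]_{A_2,\gamma_n}\asymp\log n\).
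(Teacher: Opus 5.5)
Your strategy is the paper's. Step~1 is its reduction from $\mathcal E$ to $\mathcal E_0$ and then to the leading term. Steps~1--2 together are Lemma~\ref{lem:omega-rough}, and Step~3 is the explicit $A_2$ computation for $|z^2-1|$.

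\textbf{Step 2.} The claim $|\Phi(\sigma^{-1})|=1-1/n+O(n^{-2})$ is false in general on $\Sigma_n$, because inversion $\sigma\mapsto\sigma^{-1}$ is not the reflection adapted to $\Phi$. In fact $(n+1)\log|\Phi(\sigma^{-1})|$ tends to minus the ratio $|\phi'_\mp|/|\phi'_\pm|$ of the two one-sided densities at the corresponding point of $\gamma$. That ratio equals $1$ only where the densities agree, e.g.\ near $\pm1$, or everywhere for a segment.

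The error is harmless, since all you need is $|\Phi(\sigma^{-1})|\le 1$. This holds because $\sigma^{-1}$ lies inside $\partial D$. Combine it with $(1+1/n)^{n+1}\ge e$ and with $|\mathcal R(\sigma)/\mathcal R(\sigma^{-1})|=1+O(1/n)$ on $\Sigma_n$ (the moduli agree on $\partial D$). Then the first term of $W$ dominates the second by a factor at least $e(1+o(1))$, uniformly on all of $\Sigma_n$. This is exactly how the paper argues.

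So the endpoint region you flag as the main obstacle is not delicate. The modulus comparison does not care that $\sigma$ and $\sigma^{-1}$ nearly coincide. In your notation it is just $\Re\,(n+1)L\le-(n+1)\log(1+1/n)\le-1$ together with $\Re M=O(1/n)$.

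\textbf{Step 3.} Here you differ from the paper only in bookkeeping. The paper pulls back by $\varphi$ to $\Gamma_n$, where $|z^2-1|^p|dz|$ becomes $|\sigma^2-1|^{2p+1}|d\sigma|$. It then uses the angular parametrization of Lemma~\ref{lem:local-param} to reduce to the weights $(x^2+n^{-2})^{q/2}$. You work in arclength directly on $\gamma_n$. The two are equivalent, with $s\asymp\int_0^x(t^2+n^{-2})^{1/2}\,dt$ in the paper's angular variable $x$.

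Two small repairs are needed:
\begin{enumerate}
\item The comparison $|z-1|\asymp\max\{|s|,n^{-2}\}$ comes from the square-root behaviour of $\phi^{-1}$ at the endpoint, which is the content of Lemma~\ref{lem:local-param}. Uniform Carleson regularity alone does not give the lower bound $|z-1|\gtrsim|s|$.
\item Your bound on the average of $\omega_{\mathcal E}^{-1}$ over $I$ should be $\lesssim\min\{n^2,L^{-1}\log(2+Ln^2)\}$. With only the second bound, the product with $\max\{L,n^{-2}\}$ blows up for $L\ll n^{-2}$. The true ratio there is $O(1)$, since the weight is essentially constant on such arcs.
\end{enumerate}
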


We begin by reducing this to estimating the Muckenhoupt characteristic of 
the elementary weight $|z^2-1|$ on $\gamma_n$.
\begin{lem}\label{lem:omega-rough}
For $n\in \N$ big enough, it holds that
\begin{align*}
\omega_n(z)\asymp |z^2-1|\quad\text{for }\;z\in \gamma_n.
\end{align*}
\end{lem}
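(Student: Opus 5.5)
The plan is to compare \(\mathcal E\) on \(\gamma_n\) with the explicit function \(U\) from \eqref{eq_def_U}, which is essentially \(\mathcal E_0\), and then to read off the size of \(U\) from its two-branch representation. Here \(\omega_n=|\mathcal E|^2\).

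\emph{Step 1: reduce \(\mathcal E\) to \(U\).} On \(\gamma_n\) we have \(\operatorname{dist}(z,\gamma)\gtrsim n^{-2}\), and even \(\gtrsim n^{-1}\) away from the endpoints. From the proof of Lemma~\ref{lem:E-prim}, the zeros satisfy \(|w_j-z_j|=O(r^n/n)\), and \(|z-z_k|\gtrsim n^{-2}\) for \(z\in\gamma_n\). Hence
\[
\Big|\frac{\mathcal E(z)}{\mathcal E_0(z)}\Big|=\prod_k\Big|1+\frac{w_k-z_k}{z-w_k}\Big|=1+O(n^3r^n).
\]
Moreover, by \eqref{eq:E0-U-est}, \(\mathcal E_0=U+O(|z^2-1|r^n)\) holds on \(\gamma_n\) for \(n\) large, since \(\gamma_n\) lies inside the neighborhood \(\mathcal N\). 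So it suffices to show \(|U(z)|\asymp|z^2-1|^{1/2}\) on \(\gamma_n\). The lower bound must beat the exponential error, which it does because \(|z^2-1|^{1/2}\gtrsim n^{-1}\) there.

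\emph{Step 2: size of \(U\).} Write \(U=\sqrt{z^2-1}\,W(\sigma)\) with \(\sigma=\varphi^{-1}(z)\). By \eqref{eq:def_W}, \(W\) is a difference of two terms \(\Phi(\sigma)^{n+1}/\mathcal R(\sigma)\) and \(\Phi(\sigma^{-1})^{n+1}/\mathcal R(\sigma^{-1})\). For \(z\in\gamma_n\) one has \(|\Phi(\sigma)|=1+1/n\), while \(\sigma^{-1}\) lies inside \(\partial D\) at comparable distance. Thus \(|\Phi(\sigma^{-1})|\le 1-c/n\), using analyticity and conformality of \(\Phi\) across \(\partial D\) together with the symmetry \(\sigma\mapsto\sigma^{-1}\) of \(\partial D\).

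Therefore \(|\Phi(\sigma)|^{n+1}\asymp e\) and \(|\Phi(\sigma^{-1})|^{n+1}\le e^{-c}\). Since \(|1/\mathcal R|\) is bounded above and below near \(\partial D\), and the boundary moduli are related through \(|\mathcal R(\sigma)|\) versus \(|\mathcal R(\sigma^{-1})|\), we get \(|W|\le C\). For the lower bound, the identity \(|\rho_+|+|\rho_-|=1\) says the two terms of \(W\) have comparable, but not necessarily equal, modulus on \(\partial D\). This step needs care: I would quantify the contrast as a ratio \(\ge e^{2}e^{-c'}\) times the boundary ratio. If that ratio is not automatically large, the fallback is to replace the Green level \(1+1/n\) by the observation that \(\log|\Phi|\) differs on the two sheets by \(\asymp 1/n\), giving \(|W|\gtrsim \big||\Phi(\sigma)|^{n+1}-|\Phi(\sigma^{-1})|^{n+1}\cdot\kappa\big|\) with \(\kappa\) the bounded modulus ratio.

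\emph{Main obstacle.} The real difficulty is precisely this uniform lower bound \(|W|\gtrsim1\), i.e.\ excluding near-cancellation between the two branches on \(\gamma_n\). The cleanest route I would try first is an argument principle and Harnack-type argument. On \(\partial D\), \(W\) has exactly the \(2(n+1)\) zeros corresponding to the \(z_j\) and their reflections, and these are simple with \(|W'|\asymp n\) by \eqref{eq:W-prim-asymp}. The function \(\sigma\mapsto \log|W|\) is harmonic off these zeros. Moving a distance \(\asymp 1/n\) off \(\partial D\), in the scale where consecutive zeros are \(\asymp 1/n\) apart in the \(\sigma\)-variable by Lemma~\ref{lem:extreme-pts}, one should get \(|W|\asymp1\) by rescaling \(W\) locally to a function close to \(a e^{n\,\cdot}-b e^{-n\,\cdot}\). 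Combined with Step 1, this yields \(\omega_n\asymp|z^2-1|\) on \(\gamma_n\).
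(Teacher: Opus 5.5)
\textbf{There is a genuine gap in Step 2.} Your Step~1 is correct and is the paper's reduction: the zeros of \(\mathcal E_0\) lie within \(O(r^n/n)\) of the \(z_j\), and \(\gamma_n\) stays \(\gtrsim n^{-2}\) away from them, so \(\mathcal E/\mathcal E_0=1+O(e^{-cn})\) on \(\gamma_n\). Your upper bound \(|W|\le C\) is also fine. The problem is the lower bound \(|W|\gtrsim 1\) on \(\Gamma_n=\varphi^{-1}(\gamma_n)\), which is the whole content of the lemma. You leave it as a heuristic. Neither the \(\kappa\)-fallback nor the Harnack/rescaling sketch is an argument. Moreover, the \(\kappa\)-version would genuinely fail if the boundary modulus ratio could be as large as \(e^{1+c}\), since then the two branches could cancel on \(\Gamma_n\).

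\textbf{The missing fact.} The two terms of \(W\) do not merely have comparable moduli on \(\partial D\); they have \emph{exactly equal} moduli there. This is the modulus-matching condition \eqref{eq:modulus-match} that the dual weight \(f\) was built to satisfy. Indeed, \(|R_\pm|=|\phi_+'|+|\phi_-'|\) is the same on both sides of \(\gamma\), and \(|\sigma-\sigma^{-1}|\) is symmetric under \(\sigma\mapsto\sigma^{-1}\). Hence \(|\mathcal R(\sigma)|=|\mathcal R(\sigma^{-1})|\) for \(\sigma\in\partial D\). You in fact use this implicitly when you say \(W\) vanishes on \(\partial D\) at the points corresponding to the \(z_j\): a difference of two terms of unequal modulus cannot vanish.

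\textbf{How the lower bound then follows.} With this fact the bound is immediate, and this is exactly the paper's argument.
\begin{itemize}
\item \(\mathcal R\) is analytic and zero-free near \(\partial D\), \(\partial D\) is invariant under inversion, and \(\Gamma_n\) lies within \(O(1/n)\) of \(\partial D\). The smooth function \(\log|\mathcal R(\sigma^{-1})|-\log|\mathcal R(\sigma)|\) vanishes on \(\partial D\), so it is \(O(1/n)\) on \(\Gamma_n\). Thus \(|\mathcal R(\sigma)|/|\mathcal R(\sigma^{-1})|=1+O(1/n)\) uniformly, including near \(\pm1\).
\item Combine this with \(|\Phi(\sigma)|^{n+1}=(1+1/n)^{n+1}\) and \(|\Phi(\sigma^{-1})|\le 1\). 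Your sharper \(1-c/n\) is not needed. The first term of \(W\) then exceeds the second in modulus by a factor at least \(e+o(1)>1\).
\item Therefore \(|W(\sigma)|\gtrsim\bigl(1-\tfrac{1}{e+o(1)}\bigr)|\Phi(\sigma)|^{n+1}/|\mathcal R(\sigma)|\asymp 1\) on \(\Gamma_n\).
\end{itemize}
No local analysis of the zeros of \(W\) is required.
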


\begin{proof}
We first reduce the claim to the corresponding estimate for
the Faber approximation \(\mathcal E_0\) of 
\(\mathcal E\) from Section~\ref{ss:derivative}. 
By the proof of Lemma~\ref{lem:E-prim}, the interior zeros
\(w_j\) of \(\mathcal E_0\) satisfy
\[
w_j=z_j+O(e^{-cn}),
\]
while the zeros at the endpoints are left unchanged. Using
\(d(z_j,\gamma_n)\gtrsim n^{-2}\), and arguing as in the product
comparison at the end of that proof, we obtain
\[
\frac{\mathcal E_0(z)}{\mathcal E(z)}
=
\prod_{j=0}^n\frac{z-w_j}{z-z_j}
=1+O(e^{-cn}),
\qquad z\in\gamma_n,
\]
after possibly adjusting \(c>0\). It therefore suffices to prove that
\begin{equation}
\label{eq:E0-green-comparison}
|\mathcal E_0(z)|^2\asymp |z^2-1|,
\qquad z\in\gamma_n.
\end{equation}

To this end, let \(f\) be the dual Faber weight used to generate \(\mathcal{E}_0\),
and recall from Lemma~\ref{lem:Faber-dual-asymp} that
\(F=f\circ\varphi\) extends to a zero-free holomorphic function on a
neighborhood of \(\overline D\). Writing
\(\sigma=\varphi^{-1}(z)\), Theorem~\ref{thm:dual-Faber-asymp}
gives
\begin{equation}
\label{eq:E0-on-green-curve}
\mathcal E_0(z)=\sqrt{z^2-1}
\big(F(\sigma)\Phi(\sigma)^n-F(\sigma^{-1})\Phi(\sigma^{-1})^n\big)
+O\bigl(|z^2-1|r^n\bigr).
\end{equation}
The modulus matching condition \eqref{eq:modulus-match} says that
\[
|F(\sigma)|=|F(\sigma^{-1})|, \qquad \sigma\in\Gamma=\partial D,
\]
so since \(\Gamma_n=\varphi^{-1}(\gamma_n)\) converges to \(\Gamma\),
it follows that
\[
\frac{|F(\sigma^{-1})|}{|F(\sigma)|}=1+o(1)
\]
uniformly for \(\sigma\in\Gamma_n\).
For \(z\in\gamma_n\), we also have that \(|\Phi(\sigma)|=1+\frac1n\)
while \(|\Phi(\sigma^{-1})|\leq1\).
Hence we have that
\[
\bigg|\frac{F(\sigma)\Phi(\sigma)^n}{F(\sigma^{-1})\Phi(\sigma^{-1})^n}\bigg|
\ge \left(1+\frac1n\right)^{n}(1+o(1))=e+o(1),
\]
and so the first term in \eqref{eq:E0-on-green-curve}
dominates. Since its modulus is comparable to \(1\), 
we conclude from \eqref{eq:E0-on-green-curve} that
\[
|\mathcal E_0(z)|\asymp\sqrt{|z^2-1|},
\qquad z\in\gamma_n,
\]
which proves \eqref{eq:E0-green-comparison} and hence the lemma.
\end{proof}

The curves $\gamma_n$ degenerate as $n\to\infty$. 
For that reason, we again work with the transformation 
$\varphi(\sigma)=(\sigma+\sigma^{-1})/2$. 
Define $\Gamma=\partial D$ and put
\[
\Gamma_n=\varphi^{-1}(\gamma_n)=\Psi\bigl(\{w:|w|=1+1/n\}\bigr),
\] 
where \(\Psi=\Phi^{-1}\) is the inverse of the exterior conformal mapping of
the opened up arc.
The restriction of \(\varphi\) to \(\Gamma_n\) is a homeomorphism
onto \(\gamma_n\), so every subarc \(I\subset\gamma_n\) can be
written as \(I=\varphi(J)\) for a subarc \(J\subset\Gamma_n\).
By Lemma~\ref{lem:omega-rough}, for \(p\in\{-1,0,1\}\),
\begin{align}
\int_{\varphi(J)} \omega_n(z)^p |dz| &\asymp 
\int_{\varphi(J)} |z^2-1|^p |dz| 
= \int_J |\varphi(\sigma)^2-1|^p |\varphi'(\sigma)||d\sigma|\\
&=\int_J \frac{2}{(2|\sigma|)^{2p+2}} |\sigma^2-1|^{2p+1} |d\sigma|
\asymp\int_J  |\sigma^2-1|^{2p+1} |d\sigma|.
\end{align}
The last comparison is uniform in \(n\), since \(\Gamma_n\to\Gamma\) 
and \(\Gamma\) is bounded away from the origin. Plugging this into
the definition \eqref{eq:def-A2-constant}, we obtain
\begin{equation}
\label{eq:A2-opened}
[\omega_n]_{A_2,\gamma_n} \asymp \sup_{J\subset\Gamma_n}
\frac{\left(\int_J|\sigma^2-1|^3\,|d\sigma|\right)
\left(\int_J|\sigma^2-1|^{-1}\,|d\sigma|\right)}{\left(\int_J|\sigma^2-1|\,
|d\sigma|\right)^2}.
\end{equation}
The critical case, when this expression grows the fastest, 
comes from subarcs approaching either of the
points \(\pm1\). We first consider subarcs contained in a fixed
small neighborhood of one of these points.

\begin{lem}
\label{lem:local-param}
Fix \(\theta_0\) such that
\(\Psi(e^{i\theta_0})\in\{-1,1\}\), and let
\[
\sigma_n(x)
=
\Psi\Big(\big(1+\tfrac{1}{n}\big)e^{i(x+\theta_0)}\Big),
\qquad -\pi\leq x\leq\pi.
\]
There exists \(\delta>0\) such that, for
\(q\in\{-1,1,3\}\) and every subarc
\[
J\subset
\Gamma_n\cap B\bigl(\Psi(e^{i\theta_0}),\delta\bigr),
\]
we have
\[
\int_J|\sigma^2-1|^q\,|d\sigma| \asymp 
\int_{\sigma_n^{-1}(J)}\Big(x^2+\frac1{n^2}\Big)^{\frac{q}{2}}dx.
\]
The implied constants are independent of both \(J\) and \(n\).
\end{lem}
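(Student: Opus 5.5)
The plan is to reduce to a local computation near the point \(\sigma_*:=\Psi(e^{i\theta_0})\in\{-1,1\}\), where the only relevant information is the size of \(|\sigma^2-1|\) and the arc-length element \(|d\sigma|\). Parametrizing \(J\) by \(\sigma=\sigma_n(x)\), we have \(|d\sigma|=|\sigma_n'(x)|\,dx\) with
\[
\sigma_n'(x)=i\big(1+\tfrac1n\big)e^{i(x+\theta_0)}\Psi'\big((1+\tfrac1n)e^{i(x+\theta_0)}\big).
\]
Since \(\gamma\) is analytic, \(\partial D\) is an analytic Jordan curve, so \(\Psi\) extends conformally to a neighborhood of the unit circle with \(\Psi'\) nonvanishing there. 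Hence \(|\sigma_n'(x)|\asymp1\) uniformly in \(n\) and in \(x\), at least for \(|x|\) small. It therefore suffices to prove
\[
|\sigma_n(x)^2-1|\asymp\Big(x^2+\frac1{n^2}\Big)^{1/2}
\]
for \(\sigma_n(x)\in B(\sigma_*,\delta)\), with constants independent of \(n\). Raising this to the power \(q\in\{-1,1,3\}\) preserves two-sided comparability, and integrating over \(\sigma_n^{-1}(J)\) then gives the claim.

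For the pointwise comparison, first note that \(\sigma\mapsto\sigma^2-1\) has a simple zero at \(\sigma_*=\pm1\). Hence \(|\sigma^2-1|\asymp|\sigma-\sigma_*|\) on \(B(\sigma_*,\delta)\), provided \(\delta<1\) so that the other zero \(-\sigma_*\) is excluded. Next, \(\Psi\) is bi-Lipschitz on a closed annulus \(\{1-\epsilon\le|w|\le1+\epsilon\}\), with constants independent of \(n\). This uses that \(\Psi\) is injective and has nonvanishing derivative on a neighborhood of the compact annulus, and that analytic continuation across the circle keeps it injective on a thin enough annulus. Consequently,
\[
|\sigma_n(x)-\sigma_*|\asymp\big|(1+\tfrac1n)e^{i(x+\theta_0)}-e^{i\theta_0}\big|.
\]
The right-hand side is elementary to estimate: \(|(1+h)e^{ix}-1|^2=h^2+4(1+h)\sin^2(x/2)\asymp h^2+x^2\) for \(|x|\le\pi\) and \(0<h\le1\). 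With \(h=1/n\) this yields the desired comparison.

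It remains to check that the condition \(\sigma_n(x)\in B(\sigma_*,\delta)\) forces \(x\) into a range where all of the above holds, uniformly in \(n\). By the bi-Lipschitz bound, \(\sigma_n(x)\in B(\sigma_*,\delta)\) implies \(|x|\lesssim\delta\). Choosing \(\delta\) small makes \(\sigma_n^{-1}(J)\) a subinterval of a fixed small interval around \(0\), with no wrap-around issues in the parametrization by \(x\in[-\pi,\pi]\). I expect the main (though mild) obstacle to be exactly this uniformity: arguing that one fixed annulus and one fixed \(\delta\) work for all large \(n\), which follows from the analytic extension of \(\Psi\) across \(\partial\D\). Once that is in place, every implied constant depends only on \(\Psi\) and \(\delta\), and hence not on \(J\) or \(n\).
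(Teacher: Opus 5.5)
Your proposal is correct and follows essentially the same route as the paper. Conformality of \(\Psi\) across the unit circle gives \(|\sigma_n(x)-\sigma_*|\asymp|(1+\frac1n)e^{i(x+\theta_0)}-e^{i\theta_0}|\) and \(|\sigma_n'(x)|\asymp 1\); the elementary identity then yields \((x^2+n^{-2})^{1/2}\), and \(|\sigma^2-1|\asymp|\sigma-\sigma_*|\) on a small ball finishes the change of variables. Your bi-Lipschitz bound on a fixed annulus, used to force \(|x|\lesssim\delta\) uniformly in \(n\), makes explicit what the paper leaves implicit in ``for \(\zeta\) sufficiently close to \(\zeta_0\)''.
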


\begin{proof}
Since the argument is identical for both end-points, we may assume that 
\(\Psi(e^{i\theta_0})=1\). Put \(\zeta_0=e^{i\theta_0}\). Since \(\Psi\) is
conformal in a neighborhood of the unit circle,
\[
|\Psi(\zeta)-1|\asymp|\zeta-\zeta_0|,
\]
and \(|\Psi'(\zeta)|\asymp1\) for \(\zeta\) sufficiently close to \(\zeta_0\).

Let us calculate $|\zeta -\zeta_0|$ for 
$\zeta=\zeta(x) = (1+\frac{1}{n})e^{i (x+\theta_0)}$. 
By elementary trigonometry,
\[
|\zeta-\zeta_0|^2=2\Big(1+\frac1n\Big)(1-\cos x)+\frac1{n^2}
\asymp x^2+\frac1{n^2}
\]
for \(|x|\) sufficiently small. It follows that
\[
|\sigma_n(x)-1| \asymp \Big(x^2+\frac1{n^2}\Big)^{1/2}.
\]
After choosing \(\delta>0\) sufficiently small, we also have
\[
|\sigma_n(x)^2-1|
\asymp|\sigma_n(x)-1|
\]
whenever \(\sigma_n(x)\in B(1,\delta)\). Finally,
\[
|\sigma_n'(x)| = \Big(1+\frac1n\Big)
\Big|\Psi'\Big(\big(1+\tfrac1n\big)e^{i(x+\theta_0)}\Big)\Big| \asymp1,
\]
so the result now follows by changing variables
\(\sigma=\sigma_n(x)\).
\end{proof}

This allows us to estimate the ratio in \eqref{eq:A2-opened} 
for subarcs $J\subset B(\pm 1,\delta)$.

\begin{lem}\label{lem:A2-local-estimate}
For $\delta>0$ sufficiently small, we have that  
\begin{align*}
\sup_{J\subset \Gamma_n\cap B(\pm 1,\delta)}
\frac{\left(\int_J |\sigma^2-1|^3 |d\sigma|\right)
\left(\int_J |\sigma^2-1|^{-1} |d\sigma|\right)}{\left(\int_J |\sigma^2-1| 
|d\sigma|\right)^2} \asymp \log n.
\end{align*}
\end{lem}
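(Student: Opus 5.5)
By Lemma~\ref{lem:local-param}, the ratio for a subarc $J\subset\Gamma_n\cap B(\pm1,\delta)$ is comparable, with constants independent of $J$ and $n$, to the same ratio for an interval $[a,b]=\sigma_n^{-1}(J)\subset[-x_0,x_0]$ with weight $w(x)=x^2+n^{-2}$. Precisely, we must show
\[
Q(a,b)\coloneqq\frac{\Big(\int_a^b w^{3/2}\,dx\Big)\Big(\int_a^b w^{-1/2}\,dx\Big)}{\Big(\int_a^b w^{1/2}\,dx\Big)^2}\asymp \log n,
\]
uniformly over $a<b$ in $[-x_0,x_0]$, where $x_0$ is fixed by the choice of $\delta$. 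Rescaling $x=t/n$ turns $w$ into $n^{-2}(t^2+1)$. The powers of $n$ cancel exactly, since the numerator carries $n^{-3}n^{1}n^{-2}$ and the denominator $(n^{-1}n^{-1})^2$. So we study
\[
\frac{\Big(\int_\alpha^\beta (1+t^2)^{3/2}\,dt\Big)\Big(\int_\alpha^\beta (1+t^2)^{-1/2}\,dt\Big)}{\Big(\int_\alpha^\beta (1+t^2)^{1/2}\,dt\Big)^2}
\]
over all intervals $[\alpha,\beta]\subset[-nx_0,nx_0]$.

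\emph{Lower bound.} Take $[\alpha,\beta]=[0,nx_0]$, i.e.\ $J$ starting at the endpoint. Then
\[
\int_0^{T}(1+t^2)^{3/2}\,dt\asymp T^4,\qquad \int_0^{T}(1+t^2)^{1/2}\,dt\asymp T^2,\qquad \int_0^{T}(1+t^2)^{-1/2}\,dt\asymp\log T .
\]
With $T=nx_0$ the ratio is $\asymp\log n$.

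\emph{Upper bound.} I would split into two cases.

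First, suppose $\beta-\alpha\le 1+\max(|\alpha|,|\beta|)/2$, i.e.\ the interval is short relative to its distance from the origin, or has length $O(1)$ near the origin. Then $1+t^2$ is comparable to a constant on $[\alpha,\beta]$, because the ratio of $1+t^2$ at any two points of the interval is bounded. Each integral is then comparable to (length)$\times$(constant power), so the ratio is $O(1)$.

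Otherwise the interval is long, say with $|\beta|\ge|\alpha|$ and $L\coloneqq\beta-\alpha\gtrsim 1+|\beta|$. Hence $1+|\beta|\asymp L$, which lies between $1$ and $2nx_0$. For the weight $s=1+|t|$, which is comparable to $(1+t^2)^{1/2}$, we get
\[
\int_\alpha^\beta s^3\,dt\lesssim L\,(1+|\beta|)^3\asymp L^4,\qquad \int_\alpha^\beta s^{-1}\,dt\lesssim \log(2+L)\lesssim\log n .
\]
For the denominator, $\int_\alpha^\beta s\,dt\gtrsim L^2$, since a subinterval of length $\gtrsim L$ has $|t|\gtrsim L$. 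So the ratio is $O(\log n)$.

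These two cases are exhaustive, and together with the lower bound they give $\asymp\log n$. The same computation handles $-1$.

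The main care point is the denominator in the long case, i.e.\ showing that a long interval necessarily carries mass $\gtrsim L^2$ even when it straddles the origin. If the interval is long, at least half of it lies at distance $\ge L/4$ from $0$, which gives the bound. Everything else is elementary. One should also note that the uniformity of the comparison in Lemma~\ref{lem:local-param} ensures the constants in $\asymp$ do not depend on $J$ or $n$.
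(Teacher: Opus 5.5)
Your proof is correct. It shares the reduction via Lemma~\ref{lem:local-param} and the lower-bound test interval $[0,c]$ with the paper, but proves the upper bound by a different decomposition.

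\textbf{The paper's route.} It uses evenness of the integrands to reduce to $0\le x_0<x_1$. It then substitutes $x=x_0+\sqrt{x_0^2+n^{-2}}\,y$, using $\sqrt{(x_0+t)^2+n^{-2}}\asymp\sqrt{x_0^2+n^{-2}}+t$. This makes each integral a power of $\sqrt{x_0^2+n^{-2}}$ times $\int_0^{y_1}(1+y)^q\,dy$. The whole ratio therefore collapses to the single explicit function $\frac{((1+y_1)^4-1)\log(1+y_1)}{((1+y_1)^2-1)^2}$ of the relative length $y_1=(x_1-x_0)/\sqrt{x_0^2+n^{-2}}\le \pi n$. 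This function is bounded for $y_1\le1$ and comparable to $\log(1+y_1)$ beyond, so no case analysis is needed.

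\textbf{Your route.} You rescale once by $n$ to the fixed weight $1+t^2$ on a window of length $O(n)$. You then split into two cases:
\begin{itemize}
\item intervals on which the weight is essentially constant, where the ratio is $O(1)$;
\item long intervals with $L\asymp 1+\max(|\alpha|,|\beta|)$, where crude size bounds on $(1+|t|)^q$ suffice.
\end{itemize}
You handle intervals straddling the origin directly, since half of any interval of length $L$ lies at distance at least $L/4$ from $0$, so no evenness reduction is needed.

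\textbf{Comparison.} The paper's substitution buys a closed-form description showing exactly which intervals are near-extremal, namely those with large relative length $y_1$. Your dichotomy uses only rough comparability of the weight. It isolates the $n$-dependence in one fact: $\int(1+|t|)^{-1}\,dt\lesssim\log n$ over a window of length $O(n)$. This makes transparent that the logarithm comes from the $|x|^{-1}$-type factor, which sits at the edge of $A_2$.

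\textbf{A cosmetic point.} The constant bounding $\sigma_n^{-1}(J)$ and the length of your test interval $[0,x_0]$ should be two different constants. The test interval must be short enough that $\sigma_n([0,x_0])\subset B(\pm1,\delta)$, as in the paper's choice of a small $c$. Since your upper bound only uses $L\lesssim n$, this changes nothing.
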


\begin{proof}
Let $J\subset \Gamma_n\cap B(\pm 1,\delta)$ and 
$\sigma_n^{-1}(J)=[x_0,x_1]$. By Lemma~\ref{lem:local-param},
the ratio in the statement is comparable to
\begin{equation}
\label{eq:ratio-rewrite}
\frac{\left(\int_{x_0}^{x_1}
\left(x^2+\frac{1}{n^2}\right)^{3/2}dx\right)
\left(\int_{x_0}^{x_1}
\left(x^2+\frac{1}{n^2}\right)^{-1/2}dx\right)}
{\left(\int_{x_0}^{x_1}
\left(x^2+\frac{1}{n^2}\right)^{1/2}dx\right)^2}.
\end{equation}
Since the integrands are even, we may, up to absolute
multiplicative constants, suppose that
\[
0\leq x_0<x_1.
\]

For $0\leq x\leq x_1-x_0$, equivalence of the
\(\ell^1\)- and Euclidean norms on \(\mathbb R^2\) gives
\[
\sqrt{(x_0+x)^2+\frac{1}{n^2}}
\asymp
\sqrt{x_0^2+\frac{1}{n^2}}+x.
\]
Put
\[
y_1=\frac{x_1-x_0}{\sqrt{x_0^2+n^{-2}}}.
\]
Performing the change of variables \(x=x_0+t\) followed by
\[
t=\sqrt{x_0^2+\frac1{n^2}}\,y,
\]
we obtain that
\begin{align}
\int_{x_0}^{x_1}\Big(x^2+\frac1{n^2}\Big)^{q/2}dx
&\asymp\int_0^{x_1-x_0}\bigg(\sqrt{x_0^2+\frac1{n^2}}+t\bigg)^qdt\\
&=
\Big(x_0^2+\frac1{n^2}\Big)^{(q+1)/2}
\int_0^{y_1}(1+y)^qdy.
\end{align}
In particular,
\begin{equation}
\int_{x_0}^{x_1}\Big(x^2+\frac{1}{n^2}\Big)^{3/2}dx
\asymp \Big(x_0^2+\frac{1}{n^2}\Big)^2
\int_0^{y_1}(1+y)^3dy,
\end{equation}
while
\begin{equation}
\int_{x_0}^{x_1}\Big(x^2+\frac{1}{n^2}\Big)^{-1/2}dx
\asymp
\int_0^{y_1}(1+y)^{-1}dy,
\end{equation}
and
\begin{equation}
\int_{x_0}^{x_1}\Big(x^2+\frac{1}{n^2}\Big)^{1/2}dx
\asymp
\Big(x_0^2+\frac{1}{n^2}\Big)
\int_0^{y_1}(1+y)dy.
\end{equation}
Thus, the ratio \eqref{eq:ratio-rewrite} is comparable to
\[
\frac{\big((1+y_1)^4-1\big)\log(1+y_1)}
{\big((1+y_1)^2-1\big)^2}.
\]
This expression is bounded for $0<y_1\leq1$ and comparable to
$\log(1+y_1)$ for $y_1\geq1$. Since 
\[
y_1
=\frac{x_1-x_0}{\sqrt{x_0^2+n^{-2}}}
\leq n(x_1-x_0)
\leq \pi n,
\]
this proves
the upper bound.

For the reverse inequality, choose $c>0$ sufficiently small.
For $n$ large enough, there is a subarc
$J\subset\Gamma_n\cap B(\pm1,\delta)$ such that
\(\sigma_n^{-1}(J)=[0,c]\). The corresponding value of \(y_1\)
is \(cn\), and hence the ratio is comparable to \(\log n\).
\end{proof}

We can now finish the proof of Proposition~\ref{prop:A2-constant-E}.
\begin{proof}[Proof of Proposition~\ref{prop:A2-constant-E}]
Let $\delta>0$ be as in Lemma~\ref{lem:local-param}. 
Since all curves $\Gamma_n$ are uniformly Carleson, 
we can find $0<\delta_0<\delta/2$ such that the arc-length 
of each of the sets $\Gamma_n\cap B(\pm1,\delta_0)$ is at most
$\delta/8$.

Let $J\subset\Gamma_n$ be a subarc. We first suppose that
$|J|\geq\delta/2$. Then the portion of $J$ outside of
\[
B(1,\delta_0)\cup B(-1,\delta_0)
\]
has arc-length at least $\delta/4$. Since
$|\sigma^2-1|\geq\delta_0^2$ there, it follows that
\[
\int_J|\sigma^2-1||d\sigma|
\geq\frac{\delta}{4}\cdot \delta_0^2.
\]
Moreover, $|\sigma^2-1|$ and the lengths of the curves $\Gamma_n$
are uniformly bounded, and hence
\[
\int_J|\sigma^2-1|^3|d\sigma|\lesssim1.
\]
Away from $B(1,\delta_0)\cup B(-1,\delta_0)$, we also have
\[
|\sigma^2-1|^{-1}\leq\delta_0^{-2},
\]
while near either of the endpoints, Lemma~\ref{lem:local-param} gives
\begin{align*}
\int_{J\cap B(\pm1,\delta_0)}
|\sigma^2-1|^{-1}|d\sigma|
&\lesssim
\int_{-\pi}^{\pi}
\left(x^2+\frac{1}{n^2}\right)^{-1/2}dx\\
&\lesssim\log n.
\end{align*}
Combining these estimates, we obtain
\[
\frac{\left(\int_J |\sigma^2-1|^3 |d\sigma|\right)
\left(\int_J |\sigma^2-1|^{-1} |d\sigma|\right)}
{\left(\int_J |\sigma^2-1| |d\sigma|\right)^2}
\lesssim\log n
\]
whenever $|J|\geq\delta/2$.

Suppose now that $|J|<\delta/2$. If
\[
J\cap\bigl(B(1,\delta_0)\cup B(-1,\delta_0)\bigr)=\emptyset,
\]
then $|\sigma^2-1|$ is bounded above and below on $J$, and the
ratio of integrals is bounded by a constant. If instead
$J\cap B(\pm1,\delta_0)\neq\emptyset$, then
\[
J\subset B(\pm1,\delta),
\]
since $\delta_0<\delta/2$ and $|J|<\delta/2$. Thus,
Lemma~\ref{lem:A2-local-estimate} applies and again gives the
upper bound $O(\log n)$.

We have therefore proved that
\[
\sup_{J\subseteq\Gamma_n}
\frac{\left(\int_J |\sigma^2-1|^3 |d\sigma|\right)
\left(\int_J |\sigma^2-1|^{-1} |d\sigma|\right)}
{\left(\int_J |\sigma^2-1| |d\sigma|\right)^2}
\lesssim\log n.
\]
The reverse inequality follows directly from
Lemma~\ref{lem:A2-local-estimate}. The result now follows from
\eqref{eq:A2-opened}.
\end{proof}

\subsection{Separation of extreme points}
We conclude by verifying the necessary separation 
properties of the points \(z_j=z_{j,n}\),
needed to apply the Marcinkiewicz--Zygmund inequality of Chui--Zhong.
\begin{lem}
\label{lem:separation}
Let \(z_0,\dotsc,z_n\) be the extremal points from
Lemma~\ref{lem:extreme-pts}, and let
\[
\gamma_n=\bigl\{z\in\C:|\phi(z)|=1+1/n\bigr\}.
\]
Then, for \(j=1,\ldots,n\), we have that
\[
|z_j-z_{j-1}|\asymp d(z_j,\gamma_n).
\]
\end{lem}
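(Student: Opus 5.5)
We will work in the opened-up picture, exactly as in the proof of Lemma~\ref{lem:extreme-pts}. Write \(z_j=\varphi(\zeta(t_j))\), where \(\zeta:[0,\tau]\to\partial D\) is the real-analytic parametrization used there. Put \(\zeta(t)=\Psi(e^{i\vartheta(t)})\) with \(\Psi=\Phi^{-1}\). After reparametrizing we may take the parameter to be \(\vartheta\) itself, so that the endpoints \(z=\pm1\) correspond to \(\vartheta\in\{\theta_0,\theta_0+\pi\}\); these are the two points of \(\partial D\) where \(\Psi(e^{i\theta_0})=\pm1\). Since \(A'\asymp 1\) and \(B'=O(1)\), the defining relation \(nA(t_j)+B(t_j)=2\pi j\) gives \(t_j-t_{j-1}\asymp 1/n\) uniformly in \(j\). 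Consequently \(|\zeta(t_j)-\zeta(t_{j-1})|\asymp 1/n\) as well. Moreover, the distance in the \(\vartheta\)-variable from \(\zeta(t_j)\) to the nearest preimage of \(\pm1\) is \(\asymp (\min\{j,n-j\})/n\), up to an additive \(O(1/n)\).

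Next we transfer the spacing back to \(\gamma\). Since \(z=\varphi(\sigma)\) and \(\varphi'(\sigma)=\tfrac12(1-\sigma^{-2})\) vanishes simply at \(\sigma=\pm1\), we have \(|\varphi'(\sigma)|\asymp|\sigma^2-1|\asymp|\sigma\mp1|\) near the endpoints. Integrating along the short arc of \(\partial D\) between \(\zeta(t_{j-1})\) and \(\zeta(t_j)\), on which \(|\sigma^2-1|\) is comparable to \(\max(|\zeta(t_j)\mp1|,1/n)\), yields
\[
|z_j-z_{j-1}|\asymp \frac1n\cdot\frac{1+\min\{j,n-j\}}{n},
\]
which is the refined separation announced after Lemma~\ref{lem:extreme-pts}. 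The endpoint interval deserves a separate remark. At \(j=1\) the arc starts at \(\sigma=\pm1\), where \(\varphi\) is quadratic, and the integral gives \(\asymp n^{-2}\) directly, because \(|\varphi(\sigma)-\varphi(\pm1)|\asymp|\sigma\mp1|^2\).

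It remains to compute \(d(z_j,\gamma_n)\). Here we use the local parametrization of Lemma~\ref{lem:local-param}. Near an endpoint, a point of \(\Gamma=\partial D\) at angular distance \(x\) from \(\theta_0\) lies at distance \(\asymp 1/n\) from \(\Gamma_n=\Psi(\{|w|=1+1/n\})\), because \(\Psi\) is bi-Lipschitz. Mapping by \(\varphi\), we have \(|\varphi(\sigma)-\varphi(\sigma')|\asymp|\sigma-\sigma'|\,(|\sigma\mp1|+|\sigma'\mp1|)\) locally, since \(\varphi(\sigma)\mp1=(\sigma\mp1)^2/(2\sigma)\). We need a lower bound for \(d(z_j,\gamma_n)\), and for this we must exclude the possibility that points of \(\Gamma_n\) near the reflected point \(1/\zeta(t_j)\) come closer. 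This is handled by noting that \(\varphi(\sigma)=\varphi(\sigma^{-1})\), and that \(\sigma\mapsto\sigma^{-1}\) maps \(\Gamma\) to a curve inside \(D^c\) which is again tangent to \(\Gamma\) only at \(\pm1\). Hence, with \(x_j\asymp(1+\min\{j,n-j\})/n\),
\[
d(z_j,\gamma_n)\asymp \frac1n\Bigl(x_j+\frac1n\Bigr)\asymp\frac{1+\min\{j,n-j\}}{n^2}.
\]
For the upper bound it suffices to exhibit the point \(\varphi(\sigma_n(x_j))\). For the lower bound, every \(\sigma'\in\Gamma_n\) satisfies \(|\sigma'-\zeta(t_j)|\gtrsim 1/n\) and \(|\sigma'-\zeta(t_j)^{-1}|\gtrsim1/n\), and the factorization \(\varphi(\sigma')-\varphi(\sigma)=\tfrac{1}{2\sigma'}(\sigma'-\sigma)(\sigma'-\sigma^{-1})\) then gives the bound.

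Comparing the two displays proves the lemma. The main obstacle is the lower bound for \(d(z_j,\gamma_n)\) uniformly near the endpoints. We resolve it by the factorization above: each factor is bounded below by \(\max(1/n,|\cdot|)\)-type quantities, using that \(\Gamma_n\) is at distance \(\asymp1/n\) from \(\Gamma\) and that \(\Gamma\) and \(\Gamma^{-1}\) meet transversally to order one at \(\pm1\). Away from the endpoints, all quantities are simply \(\asymp n^{-1}\cdot n^{-1}\cdot n\), that is, \(\asymp 1/n\), trivially.
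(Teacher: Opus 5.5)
There is a genuine gap in the lower bound for \(d(z_j,\gamma_n)\), the step you yourself identify as the main obstacle.

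\textbf{Where the argument fails.} Your resolution relies on the claim that \(\sigma\mapsto\sigma^{-1}\) maps \(\Gamma=\partial D\) to a different curve meeting \(\Gamma\) only at \(\pm1\). This is false. Since \(\Gamma=\varphi^{-1}(\gamma)\) and \(\varphi(\sigma)=\varphi(\sigma^{-1})\), the curve \(\Gamma\) is invariant under inversion, which simply swaps its two halves. This is the involutive boundary-value property \(\varphi^{-1}_+=1/\varphi^{-1}_-\) of Remark~\ref{rem:removable-sing-U}. It is also why \(\Phi(\zeta(t)^{-1})\) has modulus one in the definition of \(A\). Consequently \(\zeta(t_j)^{-1}\in\Gamma\), and \(\Gamma_n\) contains points \(\sigma'\) within \(O(1/n)\) of it. So the factor \(|\sigma'-\zeta(t_j)^{-1}|\) is \emph{not} bounded below by \(\max(1/n,|\zeta(t_j)\mp1|)\). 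For the same reason, your local two-sided estimate \(|\varphi(\sigma)-\varphi(\sigma')|\asymp|\sigma-\sigma'|(|\sigma\mp1|+|\sigma'\mp1|)\) fails when \(\sigma'\) is near \(\sigma^{-1}\). What your written argument actually proves is that both factors are \(\gtrsim1/n\). That gives only \(d(z_j,\gamma_n)\gtrsim n^{-2}\), which falls short of the claim by the factor \(1+\min\{j,n-j\}\), and this factor is unbounded.

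\textbf{The missing step.} What is needed is an either/or argument. Take \(\sigma=\zeta(t_j)\) and \(\sigma'\in\Gamma_n\). Both \(|\sigma'-\sigma|\) and \(|\sigma'-\sigma^{-1}|\) are \(\gtrsim1/n\), because \(\sigma,\sigma^{-1}\in\Gamma\). The triangle inequality forces the larger of the two to be at least \(\tfrac12|\sigma-\sigma^{-1}|=\tfrac12|\sigma^2-1|/|\sigma|\asymp\min\{j,n-j\}/n\). Hence \(|\sigma'-\sigma|\,|\sigma'-\sigma^{-1}|\gtrsim\frac1n\bigl(\frac1n+|\sigma^2-1|\bigr)\), and your factorization of \(\varphi(\sigma')-\varphi(\sigma)\) then gives the correct lower bound. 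This is exactly the paper's estimate \eqref{eq:circle-dist}. The paper applies it on the \(w\)-side after the factorization \(|\phi^{-1}(w)-z|\asymp|w-\phi_+(z)|\,|w-\phi_-(z)|\), which is your factorization transported by \(\Phi\).

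\textbf{The rest of your proposal.} With this repair your proof works. Your treatment of \(|z_j-z_{j-1}|\) by integrating \(|\varphi'|\asymp|\sigma^2-1|\) is a legitimate alternative to the paper's use of the equilibrium parametrization \(\cos\frac{\theta}{2}\). For its lower bound you should note that \(\gamma\) is chord-arc, so that the image arc length controls the chord from below. Also, the two endpoints need not sit at \(\vartheta=\theta_0\) and \(\theta_0+\pi\). Their angular separation is \(2\pi\) times the harmonic measure of one side of \(\gamma\), though nothing in your argument uses the value \(\pi\).
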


\begin{proof}
We first establish the asymptotic equality
\begin{equation}
\label{eq:refined-spacing}
|z_j-z_{j-1}|\asymp\frac{1+\min\{j,n-j\}}{n^2}
\end{equation}
for \(j=1,\ldots,n\).
Below, we will also prove that
\begin{equation}
\label{eq:distance-green-curves}
d(z_j,\gamma_n)\asymp\frac{1+\min\{j,n-j\}}{n^2},
\end{equation}
and the assertion of the lemma follows by combining these two claims.

We keep the notation from the proof of
Lemma~\ref{lem:extreme-pts}, in particular \(\theta\) denotes the
argument of \(\phi_+/\phi_-\) and \(\varphi\) is the Joukowski map which compresses
the analytic Jordan curve \(\partial D\) onto the arc \(\gamma\).
Recall also that \(\zeta:[0,\tau]\to\partial D\) is a fixed real-analytic parametrization
of the part of \(\partial D\) joining \(1\) to \(-1\) with positive orientation. 
We get that
\[
z_j=\varphi(\zeta(t_j))
\]
where \(t_j\in[0,\tau]\) are determined by
\[
(nA+B)(t_j)=2\pi j,
\]
where \(A(t)\) and \(B(t)\) are given by \eqref{eq:def-AB}.
Since \((nA+B)'\asymp n\),
it follows that
\[
t_{j}-t_{j-1}\asymp\frac1n.
\]
We now set \(s_j=\theta(z_j)\), which equivalently can be written as
\(s_j=A(t_j)\).
The derivative of \(A\) is bounded both from above and below by positive
constants, and therefore
\begin{equation}
\label{eq:sj-tj-spacing}
s_{j}-s_{j-1}\asymp t_{j}-t_{j-1}\asymp\frac1n.
\end{equation}
Since \(s_0=0\) and \(s_n=2\pi\), summing
\eqref{eq:sj-tj-spacing} over \(j\) also gives 
\begin{equation}
\label{eq:sj-tj-spacing-summed}
s_j\asymp\frac{j}{n},\qquad 2\pi-s_j\asymp\frac{n-j}{n}.
\end{equation}

By \cite{CJV}*{Eq.~(2.2)--(2.3)}, the inverse equilibrium parametrization 
of \(\gamma\) is given by \(\cos\frac{\theta(z)}2\). Let
\(z_e:[-1,1]\to\gamma\) denote its inverse, so that
\[
z_e\Big(\cos\tfrac{\theta(z)}2\Big)=z.
\]
By \cite{CJV}*{Lemma~2.1}, \(z_e\) is a regular parametrization, and hence
\[
|z_e(x)-z_e(y)|\asymp|x-y|,
\]
so that if we put \(x_j=\cos(s_j/2)\) we have \(z_e(\cos(s_j/2))=\theta^{-1}(s_j)=z_j\),
and thus
\begin{equation}
\label{eq:zj-xj-dist}
|z_j-z_{j-1}|\asymp|x_j-x_{j-1}|.
\end{equation}
It thus remains to estimate the separation between the points
\(\{x_j:0\le j\le n\}\). However, by elementary trigonometry, we may write
\[
|x_j-x_{j-1}|=2\sin\frac{s_j-s_{j-1}}4\sin\frac{s_j+s_{j-1}}4,
\]
and, in view of the estimates \eqref{eq:sj-tj-spacing} and \eqref{eq:sj-tj-spacing-summed}, 
we have \(\sin\frac{s_j-s_{j-1}}4\asymp\frac{1}{n}\)
as well as \(\sin\frac{s_j+s_{j-1}}4\asymp\frac{1+\min\{j,n-j\}}{n}\).
Thus, the desired spacing estimate \eqref{eq:refined-spacing} follows.

To estimate the distance \(d(z_j,\gamma_n)\),
we first record the simple estimate
\begin{equation}
\label{eq:circle-dist}
\inf_{|w|=1+1/n}|w-a|\,|w-b|\asymp\frac1n\Big(\frac1n+|a-b|\Big),
\qquad a,b\in\mathbb{T}.
\end{equation}
Indeed, both factors on the left are bounded below by \(1/n\), and by the triangle
inequality one of them is at least \(|a-b|/2\). Hence, the lower bound follows.
For the reverse inequality, it suffices to test with
\(w=(1+1/n)a\).

We next note that, uniformly for \(z\in\gamma\) 
and \(w\) in a fixed neighborhood of the unit circle,
\begin{equation}
\label{eq:phi-pm-factorization}
|\phi^{-1}(w)-z|\asymp|w-\phi_+(z)|\,|w-\phi_-(z)|.
\end{equation}
Indeed, let
\[
\xi=\Phi^{-1}(w), \qquad \sigma=\Phi^{-1}(\phi_+(z)),
\]
so that
\[
\Phi(\sigma^{-1})=\phi_-(z).
\]
Since \(\phi^{-1}=\varphi\circ\Phi^{-1}\), we gather that
\[
\phi^{-1}(w)-z=\varphi(\xi)-\varphi(\sigma)
=\frac{(\xi-\sigma)(\xi-\sigma^{-1})}{2\xi}.
\]
The map \(\Phi^{-1}\) is conformal on a neighborhood of the unit
circle, and hence the last identity gives
\eqref{eq:phi-pm-factorization}.

Since
\[
\gamma_n=\big\{\phi^{-1}(w):|w|=1+1/n\big\},
\]
equations \eqref{eq:phi-pm-factorization} and
\eqref{eq:circle-dist} give
\[
d(z_j,\gamma_n)\asymp\frac1n\Big(\frac1n+|\phi_+(z_j)-\phi_-(z_j)|\Big).
\]
By the definition of \(\theta\),
\[
|\phi_+(z_j)-\phi_-(z_j)|
=2\big|\sin\frac{s_j}{2}\big|
\]
and so \eqref{eq:distance-green-curves} now follows from 
\eqref{eq:sj-tj-spacing-summed}.
This completes the proof.
\end{proof}

\subsection*{Acknowledgements}
We thank Athanasios Kouroupis for stimulating discussions 
throughout the work on this paper. We are grateful to 
Joaquim Ortega-Cerd\`{a}, Stefanie Petermichl, Joan Verdera 
and Andrei Lerner for very helpful advice on various aspects 
of harmonic analysis.

The first author is supported by Grant 1154426N from Research Foundation Flanders (FWO). 
The research of the third author was supported by Odysseus Grant G0DDD23N from 
Research Foundation Flanders (FWO) and from
the G\"{o}ran Gustafsson Foundation for Research in Natural Sciences and Medicine.
The fourth author received support from Odysseus Grant G0DDD23N 
from Research Foundation Flanders (FWO).

% !TeX root = ../main.tex

\bigskip
\bigskip
\bigskip

\noindent 
\begin{minipage}[t]{0.54\textwidth}
\noindent \sc Benedikt Buchecker\newline
KU Leuven\newline
Leuven, Belgium
\newline {\tt benedikt.buchecker@kuleuven.be}
\end{minipage}
\hfill
\begin{minipage}[t]{0.46\textwidth}
\noindent \sc Benjamin Eichinger\newline
Lancaster University \newline
Lancaster, United Kingdom
\newline {\tt b.eichinger@lancaster.ac.uk}
\end{minipage}
\\ 
\medskip
\bigskip

\noindent 
\begin{minipage}[t]{0.54\textwidth}
\noindent \sc Olof Rubin\newline
Royal Institute of Technology\newline
Stockholm, Sweden
\newline {\tt orubin@kth.se}
\end{minipage}
\hfill
\begin{minipage}[t]{0.5\textwidth}
\noindent \sc Aron Wennman\newline
KU Leuven\newline
Leuven, Belgium\newline
{\tt aron.wennman@kuleuven.be}
\end{minipage}
\end{document}